\documentclass[11pt]{amsart}
\usepackage{epsfig}
\usepackage{graphics}
\usepackage{color}
\usepackage{dcpic, pictexwd}

\newtheorem{theorem}{Theorem}[section]
\newtheorem{Theorem}{Theorem}
\newtheorem{Corollary}[Theorem]{Corollary}
\newtheorem{lemma}[theorem]{Lemma}
\newtheorem{proposition}[theorem]{Proposition}
\newtheorem{corollary}[theorem]{Corollary}

\theoremstyle{definition}
\newtheorem{definition}[theorem]{Definition}

\newtheorem{remark}[theorem]{Remark}

\theoremstyle{remark}


 \def\Z{{\mathbb{Z}}}

 \def\C{{\mathbb{C}}}
 
\def\mod{{\rm Mod}}

 \def\GL{{\rm GL}}
 \def\Sp{{\rm Sp}}
 \def\Im{{\rm Im}}
 \def\PSp{{\rm PSp}}

 \begin{document}

\newenvironment{prooff}{\medskip \par \noindent {\it Proof}\ }{\hfill
$\square$ \medskip \par}
    \def\sqr#1#2{{\vcenter{\hrule height.#2pt
        \hbox{\vrule width.#2pt height#1pt \kern#1pt
            \vrule width.#2pt}\hrule height.#2pt}}}
    \def\square{\mathchoice\sqr67\sqr67\sqr{2.1}6\sqr{1.5}6}
\def\pf#1{\medskip \par \noindent {\it #1.}\ }
\def\endpf{\hfill $\square$ \medskip \par}
\def\demo#1{\medskip \par \noindent {\it #1.}\ }
\def\enddemo{\medskip \par}
\def\qed{~\hfill$\square$}

 \title[Symplectic representation of mapping class group]
{The symplectic representation of the mapping class group is unique}

\author[Mustafa Korkmaz ]{Mustafa Korkmaz}

 \address{Department of Mathematics, Middle East Technical University,
 Ankara, Turkey\\ and Max-Planck-Institut f\"ur Mathematik, Bonn, Germany}
 \email{korkmaz@metu.edu.tr}

 \date{\today}

\begin{abstract}
Any nontrivial homomorphism from the mapping class group of an orientable surface of genus $g\geq 3$ to
$\GL(2g,\C)$ is conjugate to the standard symplectic representation. It is also shown that
the mapping class group has no faithful linear representation in dimensions less than or equal to $3g-3$.
\end{abstract}

 \maketitle

  \setcounter{secnumdepth}{2}
 \setcounter{section}{0}
\section{Introduction and the results}

For a compact connected orientable surface $S$ of genus $g$ with a finite (possibly empty) set of marked points
in the interior, we define the \emph{mapping class group} $\mod (S)$ to be the group of isotopy classes of orientation--preserving
self--diffeomorphisms of $S$. Diffeomorphisms and isotopies are assumed to be the identity on the boundary and on the marked points.
Gluing a disk along each boundary component of $S$ and forgetting the marked points give a
closed surface $\bar S$ and a natural surjective map $\mod(S)\to\mod(\bar S)$ between the mapping class groups.

After fixing a basis for the first (integral) homology $H_1(\bar S,\Z)$ of $\bar S$, the action
of $\mod (\bar S)$ on $H_1(\bar S,\Z)$ gives rise to a
surjective homomorphism $\mod (\bar S) \to \Sp (2g,\Z)$. Precomposing this map with $\mod(S)\to\mod(\bar S)$ and
postcomposing with the inclusion $\Sp (2g,\Z)\hookrightarrow \GL (2g,\C)$
give a map $P:\mod (S) \to \GL (2g,\C)$. Our first result shows that
the map $P$ is the only nontrivial representation of $\mod (S)$ in this dimension:

\begin{Theorem} \label{thm:main1}
Let $g \geq 3$ and let $\phi : \mod (S)\to \GL (2g,\C)$ be a group homomorphism.
Then $\phi$ is either trivial or conjugate to the homomorphism $P: \mod (S)\to \GL (2g,\C)$.
\end{Theorem}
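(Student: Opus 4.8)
The plan is to conjugate $\phi$ onto $P$ in three stages: reduce to the case that $\phi$ is irreducible, prove the structural fact that $\phi$ carries a Dehn twist about a non‑separating curve to a transvection, and then reconstruct the standard symplectic representation from the way these transvections interlock.

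\emph{Reductions.} For $g\ge 3$ the group $\mod(S)$ is perfect: embed a four–holed sphere in $S$ so that the seven curves in the lantern relation $t_{x_1}t_{x_2}t_{x_3}t_{x_4}=t_{y_1}t_{y_2}t_{y_3}$ are all non–separating; since any two twists about non–separating curves are conjugate in $\mod(S)$, the common class $c$ of such a twist in $H_1(\mod(S))$ satisfies $4c=3c$, so $c=0$, and as these twists generate $\mod(S)$ we get $H_1(\mod(S))=0$. Hence $\phi(\mod(S))\subseteq\mathrm{SL}(2g,\C)$. Now suppose $\phi$ is reducible, with a proper invariant subspace $W$; then $\phi$ induces representations of $\mod(S)$ on $W$ and on $\C^{2g}/W$, each of dimension $<2g$, hence both trivial by the (known) triviality of complex representations of $\mod(S)$ in dimensions at most $2g-1$. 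Then $\phi(\mod(S))$ consists of matrices that fix $W$ pointwise and act trivially on the quotient, so it is abelian, hence trivial since $\mod(S)$ is perfect — contradicting the hypothesis. Thus $\phi$ is irreducible.

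\emph{A non–separating twist maps to a transvection.} This is the crux. Fix a non–separating curve $a_1$ and extend it to a chain $a_1,a_2,\dots,a_{2g}$ of non–separating curves, consecutive ones meeting transversely in one point and non–consecutive ones disjoint; set $A_i=\phi(t_{a_i})$. These matrices are pairwise conjugate and satisfy the braid relations $A_iA_{i+1}A_i=A_{i+1}A_iA_{i+1}$, the commutation relations $A_iA_j=A_jA_i$ for $|i-j|\ge 2$, and a chain relation $(A_1A_2\cdots A_{2g})^{4g+2}=\phi(t_\partial)$, with $\partial$ the boundary of a regular neighborhood of the chain; since $t_\partial$ is either trivial or a central boundary twist, irreducibility together with $\det=1$ forces $C:=A_1A_2\cdots A_{2g}$ to have finite order, hence to be diagonalizable with root–of–unity eigenvalues. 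I would then combine this with the hyperelliptic relation, with lantern relations chosen so that all participating curves are non–separating, and with $\phi$ of the mapping classes that permute the $a_i$, to conclude that $A_1$ is unipotent with $\mathrm{rk}(A_1-I)=1$, i.e.\ conjugate to a symplectic transvection. I expect this to be the main obstacle: it is exactly here that the hypothesis that the dimension is no more than $2g$ is essential, and making precise the heuristic that the relations among these $2g$ mutually conjugate matrices leave no room in $\C^{2g}$ for anything but transvections appears to require a careful hands–on analysis of the images of the generators. (An alternative packaging of the same difficulty would first show $\phi$ kills the Torelli group, factoring $\phi$ through $\Sp(2g,\Z)$, and then invoke the rigidity of $2g$–dimensional representations of $\Sp(2g,\Z)$.)

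\emph{Reconstructing $P$.} Granting the above, write $A_i=I+N_i$ with $N_i$ of rank $1$ and choose $v_i$ spanning the image of $N_i$. The commutation relations translate into orthogonality conditions among the image lines and kernel hyperplanes of the $N_i$, and the braid relations into pairing conditions; a direct computation shows that, after rescaling the $v_i$, there is a non–degenerate alternating form $\langle\,,\,\rangle$ on $\C^{2g}$ with $\langle v_i,v_j\rangle=\hat i(a_i,a_j)$ (the algebraic intersection number) for which $A_i$ is the transvection $x\mapsto x+\langle x,v_i\rangle v_i$. Irreducibility makes the $v_i$ span $\C^{2g}$, so they form a symplectic basis; since $[a_1],\dots,[a_{2g}]$ is a basis of $H_1(\bar S;\Z)$ with the same intersection matrix, in this basis $\phi(t_{a_i})$ coincides with $P(t_{a_i})$. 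Applying the same analysis to the remaining Humphries generator, $\phi$ and $P$ agree on a generating set, hence are conjugate on the subgroup they generate, which maps onto $\mod(\bar S)$; a routine argument (boundary twists are central, and point–pushing maps as well as separating twists are products of non–separating twists, on which $\phi$ has been identified with $P$, and $P$ annihilates them) then shows $\phi$ vanishes on the kernel of $\mod(S)\to\mod(\bar S)$, so $\phi$ is conjugate to $P$. \qed
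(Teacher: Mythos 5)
Your reduction to the irreducible case is correct and is a clean observation: if there were a proper invariant subspace $W$, Theorem~\ref{thm:2g-1} forces triviality on $W$ and on the quotient, so the image would be an abelian (unipotent upper-triangular) group, hence trivial by perfectness. The paper never explicitly isolates this reduction, though the flag argument of Lemma~\ref{lem:flag} is in the same spirit. Your concluding reconstruction sketch (rebuilding the alternating form from the rank-one parts of the transvections, using the braid and commutation constraints to fix the pairing) is also a reasonable, if under-argued, way to finish; note, though, that ``irreducibility makes the $v_i$ span $\C^{2g}$'' is not immediate, since the span of the $v_i$ attached to a \emph{single} chain is not visibly $\mod(S)$-invariant, and one would have to argue linear independence directly, presumably from non-degeneracy of the constructed form.

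The genuine gap is exactly the step you flag as ``the crux'': you do not actually prove that $\phi(t_a)$ is unipotent of rank one, you only sketch an expectation that the chain, hyperelliptic, lantern, and braid relations, together with $\dim=2g$ and irreducibility, leave no other possibility. That is not a proof, and it is precisely where essentially all of the technical work in the paper lives. The paper pins down the shape of $L_a$ not by manipulating the relations among the $2g$ conjugate matrices $A_1,\dots,A_{2g}$ directly, but by a different mechanism: it applies Theorem~\ref{thm:2g-1} to the subgroup $\mod(R)$ of $\mod(S)$ supported off $a\cup b$, extracting $\mod(R)$-invariant flags from generalized eigenspaces of $L_a$; the lantern relation then forces the remaining eigenvalue to be $1$ (Lemmas~\ref{lem:ev=1for2g-1}, \ref{lem:ev=1}, \ref{lem:E=2g-2}, \ref{lem:es=2g-1}, Corollary~\ref{cor:2ev}). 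Once the Jordan form in~\eqref{eqn:Jordan} is established, the basis is built inductively in Lemma~\ref{lem:L=AL=B}, which is the heart of the argument; you would need either to reproduce something equivalent to that chain of lemmas, or to actually carry out the hands-on matrix analysis you gesture toward, and it is far from clear that relations alone (without the invariant-subspace machinery for $\mod(R)$) suffice. As written, your proof establishes the outer framing but is missing its load-bearing step.
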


We say that two homomorphisms $\psi_1$ and $\psi_2$ from a group $G$ to
a group $H$ are called \emph{conjugate} if there exists an element $h\in H$ such that $\psi_2(x)=h\psi_1(x) h^{-1}$ for all $x\in G$.

One of the outstanding unsolved problems in the theory of mapping class groups is the existence of a faithful
representation $\mod (S)\to \GL (n,\C)$. Our second result shows that in dimensions $n\leq 3g-3$,
there is no faithful linear representation of the mapping class group.

\begin{Theorem} \label{thm:main2}
Let $g \geq 3$ and let $n\leq 3g-3$. Then there is no injective homomorphism
$\mod (S)\to \GL (n,\C)$.
\end{Theorem}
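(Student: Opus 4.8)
The plan is to argue by contradiction. Suppose $\phi : \mod(S) \to \GL(n,\C)$ is injective with $n \le 3g-3$, and derive a contradiction from Theorem~\ref{thm:main1}. The key representation--theoretic input I would establish is a strengthening of Theorem~\ref{thm:main1}: \emph{every irreducible complex representation of $\mod(S)$ of dimension at most $3g-3$ is either trivial or conjugate to $P$}; in particular $P$, of dimension $2g$, is the unique nontrivial irreducible representation of $\mod(S)$ of dimension $\le 3g-3$. Granting this, consider the semisimplification $\phi^{ss}$ of $\phi$, namely the direct sum of the composition factors of $\C^n$ viewed as a $\mod(S)$--module. Each composition factor has dimension at most $n \le 3g-3$, hence is trivial or conjugate to $P$, and since $2\cdot 2g = 4g > 3g-3$ at most one of them can be conjugate to $P$. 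Thus $\phi^{ss} \cong \tau \oplus \mathbf 1^{\oplus k}$, where $\mathbf 1$ is the trivial representation and $\tau$ is either $0$ or a conjugate of $P$.

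Since $\phi^{ss}$ has the same composition factors as $\phi$, the normal subgroup $\ker\phi^{ss} = \ker\tau$ acts on $\C^n$ by unipotent matrices, so $\phi(\ker\phi^{ss})$ is conjugate into the upper unitriangular group and is therefore nilpotent. If $\tau = 0$ then $\ker\phi^{ss} = \mod(S)$; if $\tau$ is conjugate to $P$ then $\ker\phi^{ss} = \ker P$, which surjects onto the Torelli group of $\bar S$. In both cases $\ker\phi^{ss}$ contains a nonabelian free subgroup $F$: the mapping class group $\mod(S)$ contains one generated by suitable powers of two Dehn twists about curves that fill $S$, the Torelli group contains one generated by twists about filling separating curves, and a free subgroup of the Torelli group lifts (along the split surjection onto it) to one in $\ker P$. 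But $\phi$ is injective, so $\phi(F)\cong F$ would be a nonabelian free subgroup of the nilpotent group $\phi(\ker\phi^{ss})$, which is impossible. This contradiction shows no injective $\phi$ exists.

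The main obstacle is the strengthened classification lemma itself. Theorem~\ref{thm:main1} supplies it only in dimension exactly $2g$; one must additionally show (i) that $\mod(S)$ has no nontrivial irreducible representation of dimension below $2g$, and (ii) that it has none of dimension strictly between $2g$ and $3g-3$ — that is, that there is a genuine dimension gap just above the symplectic representation. (Both (i) and (ii) are needed already when $g=3$, where $n\le 2g$, since one must still rule out nontrivial constituents of dimensions $3,4,5$.) I expect (i) and (ii) to be handled by the circle of ideas behind Theorem~\ref{thm:main1}: restricting a hypothetical small irreducible $\tau$ to the subgroups generated by chains of disjoint Dehn twists — which see the braid relations and the abelian relations of $\Z^{3g-3}$ — and to the mapping class groups of proper subsurfaces, and then exploiting the fact that all Dehn twists about non--separating curves are conjugate, to force $\tau$ to be trivial on one, hence on every, such twist, so that $\tau$ is trivial. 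This is the technical heart of the argument; once it is in place, the contradiction above completes the proof for all $n \le 3g-3$.
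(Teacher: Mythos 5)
Your reduction from the posited classification lemma to the theorem is sound: if every composition factor of $\phi$ is trivial or conjugate to $P$, then $\ker\phi^{\mathrm{ss}}$ is $\ker P$ or all of $\mod(S)$, $\phi$ sends it into the unitriangular (hence nilpotent) group, and since $\ker P$ and $\mod(S)$ both contain nonabelian free groups (e.g.\ by Tits-alternative type arguments, or explicitly from Dehn twists on filling curves, using that a surjection onto a free group splits), this contradicts injectivity of $\phi$. That part is correct and a little slicker than simply observing that a nilpotent group contains no nonsolvable subgroup.

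The genuine gap is the classification lemma itself, and in particular part (ii): that $\mod(S)$ has \emph{no nontrivial irreducible representation of dimension strictly between $2g$ and $3g-3$}. Part (i) is already Theorem~\ref{thm:2g-1}, and for $g=3$ the range $(2g,3g-3]$ is empty, so your parenthetical there is a non-issue; but for $g\geq 4$ you need (ii), and your sketch (``restricting to chains of disjoint Dehn twists\ldots I expect\ldots'') does not establish it. Nothing in the paper proves (ii) either. The paper instead proves something strictly weaker but sufficient: Theorem~\ref{thm:2g+n}, by induction on $n$, shows that for $g\geq n+3$ and any $\phi:\mod(S)\to\GL(2g+n,\C)$ the $n$th derived subgroup $T_X^{(n)}$ of the Torelli group of a genus-$3$ subsurface lies in $\ker\phi$. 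Since $T_X$ is not solvable, $T_X^{(n)}\neq 1$, and non-injectivity follows immediately, with no semisimplification needed. The induction there is a delicate case analysis on the eigenvalues and eigenspaces of $L_a$ (Cases $1$ and $2$ with subcases $r=2g+n,\;2g+n-1,\;2g+n-2,\;\leq 2g+n-3$), repeatedly producing $\mod(R)$- or $\mod(S)$-invariant flags and invoking Lemma~\ref{lem:flag} and Lemma~\ref{lem:L=AL=B}; it kills derived subgroups without ever asserting that the original representation is a sum of trivials and one copy of $P$. Your route, if you could prove (ii), would actually be a stronger theorem than Theorem~\ref{thm:main2}. As written, though, the heart of the matter is conjectured rather than proved, and it is at least as hard as the paper's induction, so the proposal does not yet constitute a proof.
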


The linearity problem for the braid group, which is the mapping class group of
a disk with marked points in the context of this paper, was solved by Bigelow~\cite{bigelow} and also by
Krammer~\cite{krammer1,krammer2}. In~\cite{k00tjm}, using the linearity of braid groups,
the author proved that the mapping class group of
the closed surface of genus $2$ and of sphere with marked points are linear.
These results were also obtained by Bigelow--Budney~\cite{bb}.

The first result on the nonexistence of faithful linear representations of the mapping class group
was obtained by Farb--Lubotzky--Minsky~\cite{flm} who
proved that no homomorphism from a subgroup of finite index of the mapping class group
into $\GL(n,\C)$ is injective for $n<2\sqrt{g-1}$. This was improved first by
Funar~\cite{funar} who showed that every map from the mapping class group
into ${\rm SL} (n,\C)$ has finite image for $n\leq \sqrt{g+1}$.
This was improved further by Franks--Handel in~\cite{fh} and by the author in~\cite{k},
which can be rephrased as follows:

\begin{Theorem} \cite{fh,k} \label{thm:2g-1}
  Let $g\geq 2$, $n\leq 2g-1$, and let $\phi:\mod (S)\to \GL (n,\C)$ be a homomorphism. Then $\phi$
  factors through the natural quotient map $\mod (S)\to H_1 (\mod (S),\Z)$. In particular, ${\rm Im}(\phi)$  is
  trivial if $g\geq 3$, and is a quotient of the cyclic group $\Z_{10}$ of order $10$ if $g=2$.
\end{Theorem}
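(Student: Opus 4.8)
The plan is to reduce the statement, via a composition-series argument, to the case of an irreducible representation of dimension between $2g+1$ and $3g-3$, and then to rule that case out by extending the analysis of Dehn-twist images that underlies Theorem~\ref{thm:main1}.

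Suppose $\phi:\mod(S)\to\GL(n,\C)$ is injective with $g\ge 3$ and $n\le 3g-3$. Choose a composition series $0=V_0\subset V_1\subset\cdots\subset V_k=\C^{n}$ of $\C^n$ as a $\mod(S)$-module, with irreducible subquotients $W_i=V_i/V_{i-1}$, and set $d_i=\dim W_i$. By Theorem~\ref{thm:2g-1} the only irreducible complex representation of $\mod(S)$ of dimension at most $2g-1$ is the trivial one, and by Theorem~\ref{thm:main1} the only one of dimension exactly $2g$ is $P$ (note that the symplectic representation is $\C$-irreducible, since $\Sp(2g,\Z)$ is Zariski dense in $\Sp(2g,\C)$ and $\C^{2g}$ is an irreducible $\Sp(2g,\C)$-module). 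Hence each $W_i$ is either trivial, or isomorphic to $P$, or of dimension $d_i\ge 2g+1$; and since $2(2g+1)>3g-3$, at most one $W_i$ lies in the last class.

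Let $N$ be the set of $h\in\mod(S)$ acting trivially on every $W_i$; it is a normal subgroup, and for $h\in N$ one has $(\phi(h)-I)(V_i)\subseteq V_{i-1}$ for all $i$, so $\phi(h)$ is unipotent. By Kolchin's theorem $\phi(N)$ is conjugate into the group of upper unitriangular matrices, hence is nilpotent and torsion free; as $\phi$ is faithful, $N$ is itself torsion-free nilpotent. But for $g\ge 3$ no nontrivial normal subgroup of $\mod(S)$ is torsion-free nilpotent: if it is infinite, its canonical reduction system is empty (being invariant under all of $\mod(S)$), so it contains two independent pseudo-Anosov maps and therefore a nonabelian free group; and if it is finite it has nontrivial torsion. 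Hence $N=1$, the semisimplification $\bigoplus_i W_i$ is faithful, and we may assume $\phi=\bigoplus_i W_i$. Now $\phi$ cannot have all $W_i$ trivial, or it would be trivial; and if some $W_i\cong P$ while no $W_i$ has dimension $\ge 2g+1$, then $\ker\phi=\bigcap_i\ker W_i$ contains the kernel of $P$, which is a nontrivial (indeed infinite) group — impossible. So some $W_i$ has dimension $d_i\ge 2g+1$, and then the other summands have total dimension at most $(3g-3)-(2g+1)=g-4<2g$, so each of them is trivial; therefore $\ker\phi=\ker W_i=1$. We have reduced to the case that $\phi$ is an \emph{irreducible} faithful representation of dimension $n$ with $2g+1\le n\le 3g-3$, and in particular $g\ge 4$. (When $g=3$ this range is empty, so Theorem~\ref{thm:main2} is already proved for $g=3$.)

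It remains to show — and this is the crux — that $\mod(S)$ has no nontrivial irreducible complex representation of dimension strictly between $2g$ and $3g-2$. I would deduce this by extending the argument of Theorem~\ref{thm:main1}. Fix a pants decomposition $c_1,\dots,c_{3g-3}$ of $S$ consisting of nonseparating curves, so that the Dehn twists $T_{c_i}$ pairwise commute and are all conjugate in $\mod(S)$; then the matrices $\phi(T_{c_i})$ commute, lie in a single conjugacy class, and share a common Jordan type. Combining this with the braid and chain relations along a chain of curves and with the lantern relations among the $c_i$ and curves dual to them — exactly as in the case $n=2g$, but now with $3g-3$ commuting conjugate twists available — one forces $\phi(T_c)$ to be of transvection type, with $\operatorname{rank}(\phi(T_c)-I)=1$. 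Since $\mod(S)$ is generated by Dehn twists, $\phi(\mod(S))$ is then an irreducible group generated by transvections; by the classification of such groups it preserves a nondegenerate bilinear or Hermitian form and, together with the relations in $\mod(S)$, this forces $\phi$ to factor through $\Sp(2g,\Z)$ and to be conjugate to $P$, so that $n=2g$ — a contradiction. The main obstacle is precisely this last point: Theorem~\ref{thm:main1} carries it out for $n=2g$, and to cover the whole range up to $3g-3$ one must exclude a priori the possibility that $\phi(T_c)$ has a more complicated semisimple part or larger Jordan blocks; it is exactly here that the bound $3g-3$, the number of curves in a pants decomposition, is what is needed.
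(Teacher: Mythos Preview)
Your proposal is not a proof of the stated result. The statement you were asked to prove is Theorem~\ref{thm:2g-1} (the $n\le 2g-1$ bound), which the paper does not prove at all but quotes from \cite{fh,k}; your text instead attacks Theorem~\ref{thm:main2} (the $n\le 3g-3$ non-faithfulness), and you even invoke Theorem~\ref{thm:2g-1} as an input --- so, read literally, your argument is circular.

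Read instead as an attempt at Theorem~\ref{thm:main2}, the reduction is clean but the proof has a real gap at its core. Your composition-series step and the Kolchin/normal-subgroup argument showing $N=1$ are fine, and they correctly reduce the problem to ruling out a faithful irreducible $\mod(S)$-module of dimension $d$ with $2g+1\le d\le 3g-3$. But you do not prove this: you yourself label it ``the main obstacle'' and only sketch that commuting conjugate twists along a pants decomposition together with braid, chain, and lantern relations ``force $\phi(T_c)$ to be of transvection type''. Nothing in the sketch excludes, for example, a single $3\times 3$ Jordan block or two $2\times 2$ blocks for $L_a$, and the classification of irreducible groups generated by transvections does not apply until transvection type has actually been established. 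So the argument stops exactly where the work begins.

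For comparison, the paper's proof of Theorem~\ref{thm:main2} takes a completely different route and never classifies irreducibles in this range. It proves the stronger Theorem~\ref{thm:2g+n} by induction on $n$: for $\phi:\mod(S)\to\GL(2g+n,\C)$ with $g\ge n+3$, one case-splits on the eigenvalue structure of $L_a$ (using Corollary~\ref{cor:2ev}, Lemma~\ref{lem:ev=1}, Lemma~\ref{lem:ev=1for2g-1}, and Lemma~\ref{lem:L=AL=B}) to produce $\mod(R)$- or $\mod(S)$-invariant subspaces of dimension between $3$ and $2g+n-3$, then applies the inductive hypothesis on a genus-$(g-1)$ subsurface $R$ to conclude that the $n$th derived subgroup $T_X^{(n)}$ of the Torelli group of a genus-$3$ subsurface lies in $\ker\phi$. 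Since $T_X$ contains a nonabelian free group, this already gives non-faithfulness. This induction-on-codimension avoids entirely the transvection analysis your approach would need.
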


We state a few algebraic corollaries to above theorems, some of which might be known to the experts.

\begin{Corollary}
If $g\geq 3$ then up to the conjugation by an element of $\GL (2g,\C)$ there are exactly two homomorphisms
$\Sp (2g,\Z)\to \GL (2g,\C)$, the trivial homomorphism and the injection given by the inclusion.
\end{Corollary}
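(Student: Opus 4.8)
The plan is to deduce this from Theorem~\ref{thm:main1} together with the surjectivity of $P$ onto $\Sp(2g,\Z)$ and the known computation of $H_1(\mod(S),\Z)$. First I would observe that, by the factorization $P:\mod(S)\to\mod(\bar S)\to\Sp(2g,\Z)\hookrightarrow\GL(2g,\C)$, any homomorphism $\rho:\Sp(2g,\Z)\to\GL(2g,\C)$ pulls back to a homomorphism $\phi=\rho\circ P_0:\mod(S)\to\GL(2g,\C)$, where $P_0:\mod(S)\to\Sp(2g,\Z)$ is the surjection underlying $P$. Theorem~\ref{thm:main1} says $\phi$ is either trivial or conjugate to $P$; I would like to transport this dichotomy across the surjection $P_0$ to conclude that $\rho$ is either trivial or conjugate to the inclusion.

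The key point is that $P_0$ is surjective, so a homomorphism out of $\Sp(2g,\Z)$ is determined by its composite with $P_0$: if $\rho_1\circ P_0$ and $\rho_2\circ P_0$ agree (or are conjugate by a fixed $h\in\GL(2g,\C)$), then $\rho_1$ and $\rho_2$ agree (resp.\ are conjugate by $h$). Hence the two cases of Theorem~\ref{thm:main1} applied to $\phi=\rho\circ P_0$ give exactly: either $\rho\circ P_0$ is trivial, forcing $\rho$ trivial on $\mathrm{Im}(P_0)=\Sp(2g,\Z)$, i.e.\ $\rho$ is the trivial homomorphism; or $\rho\circ P_0$ is conjugate to $P=\iota\circ P_0$ (with $\iota$ the inclusion), and cancelling the surjection $P_0$ yields that $\rho$ is conjugate to $\iota$. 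This also shows these two possibilities are genuinely distinct (the inclusion is injective, in particular nontrivial since $g\geq 1$), so there are \emph{exactly} two conjugacy classes. Conversely both the trivial map and $\iota$ are honest homomorphisms $\Sp(2g,\Z)\to\GL(2g,\C)$, so the count is exactly two.

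The one step needing a little care — and the only real obstacle — is the cancellation of the surjection in the \emph{conjugacy} statement, namely checking that $h(\rho\circ P_0)(x)h^{-1}=(\iota\circ P_0)(x)$ for all $x\in\mod(S)$ really does imply $h\rho(A)h^{-1}=\iota(A)$ for all $A\in\Sp(2g,\Z)$; this is immediate because every $A$ is $P_0(x)$ for some $x$, but it is worth stating since it is where surjectivity of $P_0$ (equivalently of $\mod(\bar S)\to\Sp(2g,\Z)$, which is classical) is used. No further input is required: the hard analytic work is entirely contained in Theorem~\ref{thm:main1}, and the Corollary is a formal consequence.
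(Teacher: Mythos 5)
Your proposal is correct and follows exactly the paper's own (one-line) argument: compose the given $\rho:\Sp(2g,\Z)\to\GL(2g,\C)$ with the natural surjection $\mod(S)\to\Sp(2g,\Z)$, apply Theorem~\ref{thm:main1}, and use surjectivity to transport the trivial-or-conjugate dichotomy back to $\rho$. The cancellation-of-surjection step you flag is indeed the only formal point, and you handle it correctly.
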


\begin{Corollary}
If $g\geq 3$ and $n\leq 2g$ then every homomorphism from $\PSp (2g,\Z)$ to $\GL (n,\C)$ is trivial.
\end{Corollary}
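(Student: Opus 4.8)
The plan is to pull a hypothetical representation of $\PSp(2g,\Z)$ back to $\Sp(2g,\Z)$ (and, if one prefers, further to $\mod(S)$) and then quote the results above. Write $q:\Sp(2g,\Z)\to\PSp(2g,\Z)=\Sp(2g,\Z)/\{\pm I\}$ for the quotient homomorphism; it is surjective, and since $g\geq 1$ the element $-I$ is a \emph{nontrivial} element of its kernel. Let $\psi:\PSp(2g,\Z)\to\GL(n,\C)$ be any homomorphism with $n\leq 2g$. Replacing $\psi$ by its direct sum with the trivial $(2g-n)$-dimensional representation yields $\tilde\psi:\PSp(2g,\Z)\to\GL(2g,\C)$ with $\ker\tilde\psi=\ker\psi$; in particular $\tilde\psi$ is trivial if and only if $\psi$ is, so it suffices to prove $\tilde\psi$ trivial.

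Now $\tilde\psi\circ q:\Sp(2g,\Z)\to\GL(2g,\C)$ is a homomorphism in dimension exactly $2g$, so by the preceding corollary it is, up to conjugation in $\GL(2g,\C)$, either trivial or equal to the standard inclusion $\Sp(2g,\Z)\hookrightarrow\GL(2g,\C)$. The second alternative is impossible: the inclusion is injective, hence so is every conjugate of it, whereas $\tilde\psi\circ q$ sends $-I\neq I$ to $I$. Therefore $\tilde\psi\circ q$ is trivial, and since $q$ is surjective, $\tilde\psi$---and hence $\psi$---is trivial.

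Alternatively, one can bypass the preceding corollary and argue through the mapping class group. Composing $\psi$ with the surjection $\mod(S)\to\mod(\bar S)\to\Sp(2g,\Z)\to\PSp(2g,\Z)$ gives $\phi:\mod(S)\to\GL(n,\C)$, and it is enough to show $\phi$ trivial. If $n\leq 2g-1$ this is immediate from Theorem~\ref{thm:2g-1}, which gives $\Im(\phi)$ trivial for $g\geq 3$. If $n=2g$, Theorem~\ref{thm:main1} says $\phi$ is trivial or conjugate to $P$; but $\phi$ factors through $\PSp(2g,\Z)$ by construction, while $P$ does not, because any mapping class lying over $-I\in\Sp(2g,\Z)$ (such exist by surjectivity of the map to $\Sp(2g,\Z)$, e.g.\ a lift of the hyperelliptic involution) is killed by $\mod(S)\to\PSp(2g,\Z)$ yet sent to $-I\neq I$ by $P$. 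A conjugate of $P$ factors through a quotient precisely when $P$ does, so $\phi$ cannot be conjugate to $P$, and is therefore trivial.

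There is no genuinely hard step here: the statement is a formal consequence of the deep inputs (Theorem~\ref{thm:main1}, Theorem~\ref{thm:2g-1}, or the first corollary). The only point requiring any care is the observation that neither the standard inclusion of $\Sp(2g,\Z)$ into $\GL(2g,\C)$ nor the symplectic representation $P$ of the mapping class group descends to $\PSp(2g,\Z)$, and this reduces to the failure of $-I=I$ in $\GL(2g,\C)$, i.e.\ simply to $g\geq 1$.
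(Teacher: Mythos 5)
Your proof is correct, and your second argument (composing with the surjection $\mod(S)\to\PSp(2g,\Z)$ and then invoking Theorem~\ref{thm:2g-1} for $n\leq 2g-1$ and Theorem~\ref{thm:main1} for $n=2g$) is exactly the paper's one-line approach, with the useful detail spelled out that the conjugate-to-$P$ alternative is excluded because any mapping class lying over $-I\in\Sp(2g,\Z)$ is killed by the composite but not by $P$. Your first argument, routing through the preceding corollary on $\Sp(2g,\Z)$ after padding to dimension $2g$, is a harmless cosmetic variant of the same reduction.
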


\begin{Corollary} If $g\geq 3$ and if $Q$ is a finite quotient of $\mod(S)$,
then every homomorphism $Q\to \GL (2g,\C)$ is trivial.
\end{Corollary}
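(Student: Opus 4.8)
The plan is to reduce immediately to Theorem~\ref{thm:main1}. Let $\pi:\mod(S)\to Q$ be a surjection realizing $Q$ as a finite quotient of $\mod(S)$, and let $\psi:Q\to\GL(2g,\C)$ be an arbitrary homomorphism. First I would form the composition $\phi=\psi\circ\pi:\mod(S)\to\GL(2g,\C)$, which is a homomorphism of exactly the type covered by Theorem~\ref{thm:main1}. That theorem tells us $\phi$ is either trivial or conjugate to the standard symplectic representation $P$.

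Next I would eliminate the second alternative. If $\phi$ were conjugate to $P$, then $\Im(\phi)$ would be isomorphic to $\Im(P)=\Sp(2g,\Z)$, which is an infinite group (already for $g\geq1$). On the other hand, by construction $\phi$ factors through $\pi$, so $\Im(\phi)$ is a quotient of the finite group $Q$, hence finite. This contradiction forces $\phi$ to be trivial.

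Finally, since $\pi$ is surjective and $\phi=\psi\circ\pi$ is trivial, $\psi$ must itself be trivial, which is the assertion. The corollary is thus a formal consequence of Theorem~\ref{thm:main1}; the only extra ingredient is the elementary observation that $\Sp(2g,\Z)$ is infinite, so there is no real obstacle to overcome here.
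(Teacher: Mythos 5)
Your proof is correct and follows the same route the paper indicates: precompose with the surjection $\mod(S)\to Q$, apply Theorem~\ref{thm:main1}, and rule out the symplectic alternative because $\Sp(2g,\Z)$ is infinite while the image must be finite. You have merely spelled out the (easy) finiteness contradiction that the paper leaves implicit.
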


In order to prove these corollaries, consider the composition the given homomorphism $H \to \GL (2g,\C)$ with
the natural surjective map $\mod(S)\to H$ and apply Theorem~\ref{thm:main1} or Theorem~\ref{thm:2g-1}.

Here is an outline of the paper. In Section~\ref{sec:prlm}, we give some lemmas on matrices.
Section~\ref{sec:mcg} gives the relevant background from the theory of mapping class groups and surface topology.
Section~\ref{sec:eigvalues} investigates the properties of eigenvalues and eigenspaces of the image of a
Dehn twist about a nonseparating simple closed curve.
The main result in this section is Lemma~\ref{lem:L=AL=B}, which constitutes a major step in the
proofs of both theorems. The first main result, Theorem~\ref{thm:main1}, is proved in
Section~\ref{sec:symprep}. We give the proof for $g\geq 4$ first, and an then outline for $g=3$.
Finally,  Theorem~\ref{thm:main2} is proved in Section~\ref{sec:nonfaithful}.

\medskip

{\bf Acknowledgments.}
This paper was written while I was visiting Max-Planck Institut f\"ur Mathematik in Bonn
on leave from Middle East Technical University.
I thank MPIM for its generous support and wonderful research environment. I also thank
\c{C}a\u{g}r{\i} Karakurt for the discussions and for his comments.

\bigskip


\section{Preliminaries in linear algebra} \label{sec:prlm}
In this section, we give the necessary definitions, setup the notation and prove some useful lemmas on matrices.
We denote by $\bar S$ the closed surface obtained from $S$ by gluing a disk along each boundary component.

Throughout the paper, the letters $U$ and $\widehat U$ will always denote the $2\times 2$ matrices
$U = \left(
           \begin{array}{cc}
             1 & 1 \\
             0 & 1 \\
           \end{array}
         \right)$ and
$\widehat U = \left(
           \begin{array}{cc}
             1 & 0 \\
             -1 & 1 \\
           \end{array}
         \right)$.

\begin{definition} \label{def:A-iB-i}
For each $i=1,2,\ldots, g$, we define the matrices $A_i$ to be the $2g\times 2g$ block diagonal matrix
Diag$(I_2,\ldots,I_2,U,I_2,\ldots,I_2)$ and
$B_i$ to be the $2g\times 2g$ block diagonal matrix Diag$(I_2,\ldots,I_2,\widehat U,I_2,\ldots,I_2)$.
Here, $U$ and $\widehat U$ are in the $i^{\rm th}$ block on the diagonal, and $I_2$ is the $2\times 2$ identity matrix.
\end{definition}

We note that $A_i$ (resp. $B_i$) is the symplectic matrix of the action of the Dehn twist $t_{a_i}$ (resp. $t_{b_i}$)
on the first homology group of $\bar S$ with respect to the basis $\{a_1,b_1,a_2,b_2,\ldots,a_g,b_g\}$
of $H_1(\bar S,\Z)$,
where $a_i$ and $b_i$ are the (oriented) simple closed curves given in Figure~\ref{abcurves}.
Recall that when we consider a Dehn twist about a curve, the orientation of the curve
is unimportant.

\bigskip

\begin{lemma} \label{lem:2XYZ}
Let $X$, $Y$ and $Z$ be, respectively, $2\times k$, $k\times 2$ and $2\times 2$, matrices with entries in $\C$.
    \begin{enumerate}
        \item [(i)] If \ $UX=\widehat U X = X$, then $X=0$.
        \item [(ii)]  If \ $YU=Y\widehat U =  Y$, then $Y=0$.
        \item [(iii)] If \ $ZU=UZ$ and \ $Z \widehat U = \widehat U Z$, then $Z=aI$.
    \end{enumerate}
\end{lemma}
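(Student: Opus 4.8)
The plan is to handle all three parts by the same elementary device. Set $N := U - I_2$ and $\widehat N := \widehat U - I_2$, so that $N = \left(\begin{smallmatrix} 0 & 1 \\ 0 & 0 \end{smallmatrix}\right)$ and $\widehat N = \left(\begin{smallmatrix} 0 & 0 \\ -1 & 0 \end{smallmatrix}\right)$; both square to zero. The key observation is that multiplication by $N$ or $\widehat N$ merely extracts a single row or column: left multiplication by $N$ sends a $2\times k$ matrix with rows $r_1,r_2$ to the matrix with rows $r_2,0$, and left multiplication by $\widehat N$ sends it to the matrix with rows $0,-r_1$. Dually, right multiplication by $N$ (resp. $\widehat N$) on a $k\times 2$ matrix with columns $c_1,c_2$ produces the matrix with columns $0,c_1$ (resp. $-c_2,0$).

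For (i): the hypothesis $UX = X$ is equivalent to $NX = 0$, which by the above says the second row of $X$ vanishes; likewise $\widehat U X = X$ is equivalent to $\widehat N X = 0$, i.e. the first row of $X$ vanishes. Hence $X = 0$. Part (ii) follows by the transposed argument applied to columns: $YU = Y$ forces the first column of $Y$ to vanish, and $Y\widehat U = Y$ forces the second column to vanish, so $Y = 0$.

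For (iii): $ZU = UZ$ is equivalent to $ZN = NZ$. Writing $Z = \left(\begin{smallmatrix} a & b \\ c & d \end{smallmatrix}\right)$ and comparing $ZN$ with $NZ$ entrywise gives $c = 0$ and $a = d$, so $Z = \left(\begin{smallmatrix} a & b \\ 0 & a \end{smallmatrix}\right)$. Now imposing $Z\widehat U = \widehat U Z$, equivalently $Z\widehat N = \widehat N Z$, on such a $Z$ forces $b = 0$ by the same entrywise comparison. Therefore $Z = aI$, as claimed.

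There is essentially no obstacle here: each case reduces to a one-line matrix computation. The only point worth flagging is that in part (iii) neither relation alone suffices, since every upper-triangular $Z$ with equal diagonal entries commutes with $U$; both hypotheses are genuinely used. This is precisely why, later on, the two Dehn twists $t_{a_i}$ and $t_{b_i}$ will be invoked together rather than one of them in isolation in order to pin down scalar matrices.
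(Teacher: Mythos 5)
Your proof is correct and complete. The paper simply declares the proof ``straight forward'' and omits it, so there is nothing to compare against; your argument via the nilpotent matrices $N = U - I_2$ and $\widehat N = \widehat U - I_2$ is exactly the kind of routine entrywise computation the author had in mind. The remark at the end, that both commutation relations in part (iii) are genuinely needed, is a nice observation that correctly anticipates how Lemma~\ref{lem:nXYZ} (and hence Lemma~\ref{lem:X=A-1} and the main arguments) use the $A_i$'s and $B_i$'s in tandem.
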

\begin{proof}
The proof is straight forward.
\end{proof}

\medskip

\begin{lemma}
\label{lem:nXYZ}
Let $X$, $Y$ and $Z$ be matrices with entries in $\C$ such that the given multiplication are defined:
    \begin{enumerate}
        \item [(i)] If \ $A_iX=B_iX=X$ for all $i$, then $X=0$.
        \item [(ii)] If \ $YA_i=YB_i=Y$ for all $i$, then $Y=0$.
        \item [(iii)] If \ $ZA_i=A_iZ$ and \ $Z B_i = B_i Z$ for all  $i$, then $Z$ is equal to a diagonal matrix
        ${\rm Diag}(a_1I_2,a_2I_2,\ldots,a_gI_2)$ for some $a_i\in\C$.
    \end{enumerate}
\end{lemma}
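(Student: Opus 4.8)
The plan is to reduce everything to the $2\times 2$ statement in Lemma~\ref{lem:2XYZ} by exploiting the block structure of the $A_i$ and $B_i$. Write the ambient matrix in $2\times 2$ block form indexed by $\{1,\dots,g\}$: for part (i), partition $X$ into horizontal block-rows $X = (X_1^T, X_2^T,\dots, X_g^T)^T$ where $X_j$ has two rows; for part (ii), partition $Y$ into vertical block-columns $Y = (Y_1, Y_2,\dots, Y_g)$ where $Y_j$ has two columns; for part (iii), write $Z = (Z_{jk})$ with each $Z_{jk}$ of size $2\times 2$.

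For (i), observe that since $A_i = \mathrm{Diag}(I_2,\dots,U,\dots,I_2)$ acts on the block-rows of $X$ by multiplying the $i$-th block-row by $U$ on the left and fixing the others, the equation $A_iX = X$ says exactly $UX_i = X_i$, and similarly $B_iX=X$ says $\widehat U X_i = X_i$. Applying Lemma~\ref{lem:2XYZ}(i) to each index $i$ gives $X_i = 0$ for all $i$, hence $X = 0$. Part (ii) is the mirror image: $YA_i = Y$ forces $Y_iU = Y_i$ and $YB_i = Y$ forces $Y_i\widehat U = Y_i$, so Lemma~\ref{lem:2XYZ}(ii) gives $Y_i = 0$ for every $i$, whence $Y = 0$.

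For (iii), compute the block entries of $ZA_i$ and $A_iZ$: multiplying $Z$ on the right by $A_i$ multiplies the $i$-th block-\emph{column} of $Z$ by $U$ on the right, while multiplying on the left by $A_i$ multiplies the $i$-th block-\emph{row} by $U$ on the left. Comparing off-diagonal blocks, for $j\neq i$ the $(j,i)$ block of $ZA_i$ is $Z_{ji}U$ and of $A_iZ$ is $Z_{ji}$, so $Z_{ji}U = Z_{ji}$; the $B_i$ relation similarly gives $Z_{ji}\widehat U = Z_{ji}$, and Lemma~\ref{lem:2XYZ}(ii) yields $Z_{ji} = 0$ for $j\neq i$. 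Symmetrically, comparing the $(i,k)$ block for $k\neq i$ gives $UZ_{ik} = Z_{ik}$ and $\widehat U Z_{ik} = Z_{ik}$, so Lemma~\ref{lem:2XYZ}(i) forces $Z_{ik} = 0$. Thus $Z$ is block-diagonal, $Z = \mathrm{Diag}(Z_{11},\dots,Z_{gg})$. Finally, for the diagonal blocks the equations reduce to $Z_{ii}U = UZ_{ii}$ and $Z_{ii}\widehat U = \widehat U Z_{ii}$, so Lemma~\ref{lem:2XYZ}(iii) gives $Z_{ii} = a_iI_2$ for some $a_i\in\C$, which is the claimed conclusion.

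There is no real obstacle here; the only thing to be careful about is bookkeeping with left versus right multiplication by the block-diagonal matrices and making sure the shapes of the off-diagonal blocks match the hypotheses of the appropriate part of Lemma~\ref{lem:2XYZ} (horizontal $2\times k$ blocks for part (i), vertical $k\times 2$ blocks for part (ii)). Since the $A_i$ and $B_i$ differ from the identity only in a single diagonal block, all cross terms vanish and the argument is entirely local to one index at a time.
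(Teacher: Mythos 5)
Your proof is correct, and its strategy is essentially the one the paper intends: reduce everything to Lemma~\ref{lem:2XYZ} via the block structure of $A_i$ and $B_i$. The only cosmetic difference is that the paper says the result ``may be proved easily by induction on $g$,'' whereas you bypass the induction entirely by partitioning into $2\times 2$ blocks and handling each index at once; this is cleaner, since the $A_i$, $B_i$ differ from $I$ in a single block so the equations genuinely decouple index by index, and your bookkeeping of which part of Lemma~\ref{lem:2XYZ} applies (part (i) for $2\times k$ blocks acted on from the left, part (ii) for $k\times 2$ blocks acted on from the right, part (iii) for the diagonal $2\times 2$ blocks) is exactly right.
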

\begin{proof}
This lemma is a slight generalization of Lemma~\ref{lem:2XYZ}, and may be proved easily by induction
on $g$.
\end{proof}

\medskip

\begin{lemma} \label{lem:X=U}
 Let $X\in \GL (2,\C)$. Suppose that
 $XU=UX$ and   $X\widehat U X=\widehat U X\widehat U $. Then $X=U$.
 \end{lemma}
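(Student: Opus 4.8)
The plan is to exploit the two hypotheses directly by writing $X=\begin{pmatrix} a & b \\ c & d\end{pmatrix}$ and extracting the constraints entry by entry. First I would use the commutation relation $XU=UX$. Since $U=\left(\begin{smallmatrix}1&1\\0&1\end{smallmatrix}\right)$, the matrices commuting with $U$ are exactly the upper-triangular matrices with equal diagonal entries, i.e. $X=\left(\begin{smallmatrix}a&b\\0&a\end{smallmatrix}\right)$. Moreover $X$ is invertible, so $a\neq 0$. This already pins down $X$ up to the two parameters $a,b$, and the remaining task is to show that the second relation forces $a=1$ and $b=1$.

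Next I would substitute this form of $X$ into the relation $X\widehat U X=\widehat U X\widehat U$, where $\widehat U=\left(\begin{smallmatrix}1&0\\-1&1\end{smallmatrix}\right)$. Both sides are explicit $2\times 2$ matrices whose entries are polynomials in $a$ and $b$; comparing the four entries gives a small system of polynomial equations. I expect the $(2,1)$-entry (or the determinant-type relation obtained by taking $\det$ of both sides) to be the cleanest: since $\det\widehat U=1$ and $\det X=a^2$, taking determinants of $X\widehat U X=\widehat U X\widehat U$ gives $a^4=a^2$, hence $a^2=1$. Combined with the remaining entry equations — which will be linear or quadratic in $b$ once $a$ is known — this should quickly yield $a=1$ and then $b=1$, so $X=U$. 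One must check that the sign $a=-1$ is excluded by one of the other entry equations (for instance the $(1,1)$-entry), which it should be since $\widehat U$ is not itself $\pm$-symmetric under this conjugation-like relation.

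The main obstacle, such as it is, is simply bookkeeping: making sure the polynomial system extracted from $X\widehat U X=\widehat U X\widehat U$ is solved completely and that no spurious solution (in particular $a=-1$, or a solution violating invertibility) survives. There is no conceptual difficulty; the relation $X\widehat U X=\widehat U X\widehat U$ is rigid enough that once $X$ is forced upper-triangular with equal diagonal, only finitely many $(a,b)$ can satisfy it, and a direct check rules out all but $(a,b)=(1,1)$. An alternative, slightly slicker route would be to observe that the relation $X\widehat U X=\widehat U X\widehat U$ together with $\widehat U$ invertible can be rewritten as $(\widehat U^{-1}X)\widehat U(\widehat U^{-1}X)=\widehat U$ after suitable manipulation, identifying $\widehat U^{-1}X$ as a "square root" of a conjugate of $\widehat U$; but for a $2\times 2$ computation the brute-force entry comparison is shortest, so that is the route I would actually write up.
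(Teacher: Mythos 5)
Your proposal is correct and takes essentially the same route as the paper: use $XU=UX$ to force $X=\left(\begin{smallmatrix}a&b\\0&a\end{smallmatrix}\right)$ with $a\neq 0$, then compare entries of $X\widehat U X=\widehat U X\widehat U$ to get a small polynomial system whose only solution is $a=b=1$. The determinant shortcut you mention (giving $a^2=1$) is a harmless variant, but as you anticipate it does not by itself settle $b$ or exclude $a=-1$, so the entry-by-entry comparison is still needed — which is exactly what the paper does.
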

\begin{proof}
Let $X = \left(
           \begin{array}{cc}
             a & b \\
             c & d \\
           \end{array}
         \right)$.
The equality $XU=UX$ implies that $c=0$ and $d=a$, so that
$X = \left(
           \begin{array}{cc}
             a & b \\
             0 & a \\
           \end{array}
         \right)$.
Note that $a\neq 0$.
Now the equality $X\widehat U X=\widehat U X\widehat U $ gives the equations
\begin{eqnarray*}
a(a-b)&=&a-b,\\
a^2&=&2a-b, \\
b(2a-b)&=& b.
\end{eqnarray*}
The only solution of these equations is $a=b=1$.
\end{proof}

\begin{lemma}  \label{lem:X=A-1}
Let $X\in \GL (2g,\C)$. Suppose that
\begin{enumerate}
  \item [(i)] all eigenvalues of $X$ are equal to $1$,
  \item [(ii)] $XA_i=A_iX$ for all $i=1,2,\ldots, g$,
  \item [(iii)] $XB_j=B_jX$ for all $j=2,3,\ldots, g$, and
  \item [(iv)] $XB_{1}X=B_{1}XB_{1}$.
\end{enumerate}
Then $X=A_1$.
\end{lemma}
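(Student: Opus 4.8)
The plan is to exploit the block structure forced by conditions (ii) and (iii) via Lemma~\ref{lem:nXYZ}, and then pin down the single remaining free block using condition (iv) together with Lemma~\ref{lem:X=U}. First I would write $X$ in $2\times 2$ block form $X=(X_{pq})_{1\le p,q\le g}$. Conditions (ii) and (iii) say that $X$ commutes with $A_i$ for all $i$ and with $B_j$ for all $j\ge 2$. Reading off the block equations, commuting with $A_i$ and $B_i$ simultaneously for a fixed $i\ge 2$ forces, by the computation behind Lemma~\ref{lem:nXYZ}(iii) applied blockwise (or directly Lemma~\ref{lem:2XYZ}(i),(ii) applied to the off-diagonal blocks in row and column $i$), that $X_{iq}=0$ and $X_{pi}=0$ for all $p,q\ne i$, and $X_{ii}=\lambda_i I_2$ for some scalar $\lambda_i$. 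Since this holds for every $i=2,\ldots,g$, all off-diagonal blocks touching an index $\ge 2$ vanish, and in particular $X_{p1}=0=X_{1q}$ for $p,q\ge 2$. What survives is that $X$ is block diagonal, $X=\mathrm{Diag}(X_{11},\lambda_2 I_2,\ldots,\lambda_g I_2)$, with only the top-left block $X_{11}\in\GL(2,\C)$ undetermined beyond the constraint still imposed by $A_1$.

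Next I would use the remaining hypotheses. Condition (ii) for $i=1$ gives $X_{11}U=UX_{11}$. Condition (iv), $XB_1X=B_1XB_1$, is block diagonal with the $j$-th diagonal block ($j\ge 2$) reading $\lambda_j^2 I_2=\lambda_j I_2$, hence $\lambda_j\in\{0,1\}$; but $X\in\GL(2g,\C)$ forces $\lambda_j\ne 0$, so $\lambda_j=1$ for all $j\ge 2$. The top-left block of (iv) reads $X_{11}\widehat U X_{11}=\widehat U X_{11}\widehat U$. Now $X_{11}$ satisfies exactly the hypotheses of Lemma~\ref{lem:X=U}: it lies in $\GL(2,\C)$, commutes with $U$, and satisfies $X_{11}\widehat U X_{11}=\widehat U X_{11}\widehat U$. (Here I should double-check that $X_{11}$ is indeed invertible; it is, since $X$ is invertible and block diagonal.) Therefore $X_{11}=U$, and assembling the blocks gives $X=\mathrm{Diag}(U,I_2,\ldots,I_2)=A_1$.

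A small subtlety: hypothesis (i), that all eigenvalues of $X$ equal $1$, is not obviously needed once Lemma~\ref{lem:X=U} is invoked, since that lemma already outputs $X_{11}=U$ whose eigenvalues are $1$; I suspect (i) is included because in the intended application one only knows a priori that $X$ is unipotent (this is how such an $X$ arises, as a product of twist images with unipotent symplectic parts), and it may also be used to rule out $\lambda_j=0$ without invoking invertibility, or to streamline the reduction. I would mention this but not rely on (i) in the core argument. The main obstacle is purely bookkeeping: carefully extracting from "$X$ commutes with $A_i$ and $B_i$" the conclusion that rows and columns $i\ge 2$ are trivial — this is where Lemma~\ref{lem:2XYZ}(i) and (ii) do the real work, applied to each $2\times k$ off-diagonal strip — and making sure the $i=1$ case is handled separately so that $X_{11}$ is left free for the final step. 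There is no deep difficulty; the proof is a structured reduction to the two-by-two lemmas already established.
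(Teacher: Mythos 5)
Your proof is correct and follows essentially the same reduction as the paper's: conditions (ii) and (iii) together with Lemma~\ref{lem:nXYZ} (equivalently, Lemma~\ref{lem:2XYZ} applied to the off-diagonal strips) force $X$ into the block-diagonal form $\mathrm{Diag}(X_{11},\lambda_2 I_2,\ldots,\lambda_g I_2)$, after which condition (iv) together with Lemma~\ref{lem:X=U} pins down $X_{11}=U$. The one place you deviate is in showing $\lambda_j=1$ for $j\ge 2$: the paper reads this off directly from hypothesis (i), whereas you extract $\lambda_j^2=\lambda_j$ from the $j$-th diagonal block of (iv) and then use invertibility of $X$ to discard $\lambda_j=0$. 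Both are valid, and your observation that hypothesis (i) is not strictly needed for the conclusion is correct --- the paper uses (i) because it is what is known a priori about $X$ in the applications and it gives the quickest route to $X_4=I_{2g-2}$.
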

\begin{proof} If $g=1$ then the lemma reduces to Lemma~\ref{lem:X=U}. So suppose that
$g\geq 2$.

Let us write $X = \left( \begin{array}{cc}
               X_1 & X_2 \\ X_3 & X_4 \\
        \end{array} \right)$, where $X_1$ and $X_4$ are, respectively, $2\times 2$ and $(2g-2)\times (2g-2)$
matrices. For each $i\geq 2$, we define $\bar A_i$ and $\bar B_i$ by
$A_i = \left( \begin{array}{cc}
              I_2 & 0 \\ 0 & \bar A_i \\
           \end{array} \right)$,
$B_i = \left( \begin{array}{cc}
             I_2 & 0 \\ 0 & \bar B_i \\
           \end{array} \right)$.

It is easy to see that, for $i\geq 2$, the equations $XA_i=A_iX$ and $XB_i=B_iX$ imply that
\begin{itemize}
  \item  $X_2\bar A_i =X_2\bar B_i =X_2$,
  \item  $\bar A_iX_3 =\bar B_i X_3=X_3$, and
  \item  $X_4 \bar A_i=\bar A_i X_4$ and $X_4 \bar B_i=\bar B_i X_4$.
\end{itemize}
It now follows from Lemma~\ref{lem:nXYZ} and the assumption that all eigenvalues of $X$ are equal to $1$ that
$X_2=0,X_3=0$ and $X_4=I_{2g-2}$.

Moreover, from the equations $XA_1=A_1X$ and $XB_1X=B_1XB_1$, we obtain
\begin{itemize}
  \item $X_1U=UX_1$, and
  \item $X_1\widehat UX_1= \widehat U X_1 \widehat U$.
\end{itemize}
Now these two equations and Lemma~\ref{lem:X=U} give us the equality
$X_1=U$, so that $X=A_1$, which is the desired result.
\end{proof}

\begin{remark} \label{rem:X=A-i}
The above proof may be modified easily to prove the following version of
Lemma~\ref{lem:X=A-1}: Suppose that
\begin{enumerate}
  \item [(i)] all eigenvalues of $X\in \GL (2g,\C)$ are equal to $1$,
  \item [(ii)] $XA_i=A_iX$ for all $i=1,2,\ldots, g$,
  \item [(iii)] $XB_j=B_jX$ for all $1\leq j \leq g$ with $j\neq k$, and
  \item [(iv)] $XB_{k}X=B_{k}XB_{k}$.
\end{enumerate}
Then $X=A_k$. The lemma holds true when the roles of $A_i$ and $B_i$ are exchanged as well.
\end{remark}

We state the following facts which will be used throughout the paper.
\begin{lemma}
Let $r$ and $s$ be two positive integers. Then the subgroup of $\GL (r+s,\C)$
consisting of the matrices of the from $\left(  \begin{array}{cc}
             I_r & * \\
             0 & I_s \\
           \end{array} \right)$
is abelian.
\end{lemma}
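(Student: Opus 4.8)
The statement to prove is elementary: the set of block upper-triangular unipotent matrices of the form $\left(\begin{array}{cc} I_r & * \\ 0 & I_s \end{array}\right)$ forms an abelian subgroup of $\GL(r+s,\C)$.

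The plan is to verify directly that this set is closed under multiplication, contains the identity, is closed under inversion, and that multiplication is commutative. Write a typical element as $M(B) = \left(\begin{array}{cc} I_r & B \\ 0 & I_s \end{array}\right)$ where $B$ ranges over all $r\times s$ complex matrices. First I would compute the product of two such elements: a straightforward block multiplication gives
\[
M(B)M(C) = \left(\begin{array}{cc} I_r & B+C \\ 0 & I_s \end{array}\right) = M(B+C),
\]
since the only nonzero off-diagonal contribution is $I_r\cdot C + B\cdot I_s = B+C$ and the bottom-right block is $0\cdot B + I_s\cdot I_s = I_s$. This single computation does almost all the work: it shows the set is closed under multiplication (the result is again of the required form), it shows $M(0)=I_{r+s}$ is the identity, and it shows $M(B)^{-1}=M(-B)$ lies in the set, so we indeed have a subgroup. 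Moreover, since matrix addition $B+C=C+B$ is commutative, we get $M(B)M(C)=M(B+C)=M(C+B)=M(C)M(B)$, which is the asserted commutativity.

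There is essentially no obstacle here; the only thing to be a little careful about is the bookkeeping of block sizes so that all the indicated products $I_r\cdot C$, $B\cdot I_s$, etc., are defined and have the right shapes, which they do because $B$ and $C$ are both $r\times s$. In fact one can phrase the argument even more cleanly by noting that $B\mapsto M(B)$ is a bijection from the additive group of $r\times s$ complex matrices onto the subgroup in question, and the displayed identity $M(B)M(C)=M(B+C)$ says this bijection is a group isomorphism; since the domain is abelian, so is the image. I would present the proof in the short form: exhibit the element $M(B)$, do the one block multiplication, and conclude.
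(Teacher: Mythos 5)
Your proof is correct. The paper states this lemma without proof, as a well-known elementary fact, and your argument — computing $M(B)M(C)=M(B+C)$ by block multiplication and observing that $B\mapsto M(B)$ is an isomorphism from the additive group of $r\times s$ matrices — is exactly the standard verification the author has in mind.
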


\begin{lemma}
The subgroup of $\GL (m,\C)$ consisting of upper triangular matrices
is solvable.
\end{lemma}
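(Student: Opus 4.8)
The plan is to exhibit an explicit normal series for the group $T$ of invertible upper triangular $m\times m$ matrices whose successive quotients are abelian. First I would single out the diagonal subgroup $D$ consisting of invertible diagonal matrices, which is visibly abelian, and the unipotent subgroup $N$ of upper triangular matrices with $1$'s on the diagonal. The map $T\to D$ sending a matrix to its diagonal part is a group homomorphism (the diagonal of a product of upper triangular matrices is the product of the diagonals), and its kernel is exactly $N$. Hence $N$ is normal in $T$ with $T/N\cong D$ abelian, and it remains to show that $N$ itself is solvable.

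To handle $N$, I would filter it by the subgroups $N_k$ ($k=1,2,\ldots,m$) consisting of those unipotent upper triangular matrices $g$ with $g_{ij}=0$ whenever $0<j-i<k$; thus $N_1=N$ and $N_m=\{I_m\}$. A direct computation with block/entry multiplication shows that if $g\in N_k$ and $h\in N_\ell$ then the entries of $gh-I$ vanish on the first $\min(k,\ell)-1$ superdiagonals, and more importantly that the commutator $g h g^{-1} h^{-1}$ lies in $N_{k+\ell}$; in particular $[N_k,N_k]\subseteq N_{2k}\subseteq N_{k+1}$, so each $N_k$ is normal in $N$ (taking $\ell=1$) and each quotient $N_k/N_{k+1}$ is abelian — in fact it is isomorphic to the additive group of the entries on the $k$-th superdiagonal. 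Splicing this chain below $N$ gives the normal series
\[
T \;\trianglerighteq\; N=N_1 \;\trianglerighteq\; N_2 \;\trianglerighteq\; \cdots \;\trianglerighteq\; N_m=\{I_m\}
\]
with abelian factors, so $T$ is solvable.

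The only mildly delicate point is the commutator estimate $[N_k,N_\ell]\subseteq N_{k+\ell}$; everything else is immediate. I would verify it by writing $g=I+X$, $h=I+Y$ with $X$ strictly upper triangular having zero entries on superdiagonals $1,\ldots,k-1$ and $Y$ likewise for $\ell-1$, noting that then the nilpotent matrices $X,Y$ satisfy $XY$ and $YX$ have zero entries on superdiagonals $1,\ldots,k+\ell-1$, and expanding $ghg^{-1}h^{-1}-I$ (using the geometric-series formula for $g^{-1}=I-X+X^2-\cdots$, a finite sum) to see that every surviving term carries at least one factor of $XY-YX$ or higher, hence lies on superdiagonal $\ge k+\ell$. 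This is the routine calculation I would not grind through in full, but it presents no real obstacle.
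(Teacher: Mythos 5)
The paper states this lemma without proof, treating it as a standard fact, so there is no paper argument to compare against. Your proof is correct and is the standard one: pass from $T$ to the unipotent radical $N$ via the abelian quotient $T/N\cong D$, then filter $N$ by the superdiagonal subgroups $N_k$ and use the commutator estimate $[N_k,N_\ell]\subseteq N_{k+\ell}$ to get abelian successive quotients. One small remark: the estimate is even cleaner if you note that $[g,h]-I=(gh-hg)g^{-1}h^{-1}=(XY-YX)g^{-1}h^{-1}$, since $XY-YX$ already vanishes on the first $k+\ell-1$ superdiagonals and right-multiplication by the unitriangular matrix $g^{-1}h^{-1}$ cannot move a nonzero entry to a lower superdiagonal; this avoids expanding the geometric series for $g^{-1}$ and $h^{-1}$ entirely. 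But as written the argument is complete and valid.
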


\bigskip


\section{The mapping class group results}
\label{sec:mcg}
Let $S$ be a compact connected oriented surface of genus $g$ with
a finite number of marked points in the interior.
In this section we state the results from the theory of mapping class groups that are needed in
the proofs of our main results. For further information on mapping class groups,
the reader is referred to~\cite{iv}, or~\cite{fm}. For a simple closed curve $a$ on $S$
we denote by $t_a$ the (isotopy class of the) right Dehn twist about $a$.

\begin{theorem}
\label{thm:dualeqv} $($\cite{km}, {\rm Theorem} $1.2)$
 Let $g\geq 1$ and let $a$ and
$b$ be two nonseparating simple closed curves on $S$. Then there is a sequence
\[
a=a_0,a_1,a_2,\ldots,a_k=b
\]
of nonseparating simple closed curves such that $a_{i-1}$ intersects $a_i$ at only one point
\end{theorem}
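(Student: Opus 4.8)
The plan is to reduce the general statement to a connectivity fact about a suitable complex of curves, and then to prove that connectivity fact by an explicit surgery argument. Concretely, I would consider the graph $\mathcal{N}_1(S)$ whose vertices are the isotopy classes of nonseparating simple closed curves on $S$, with an edge joining two classes whenever they admit representatives meeting transversely in exactly one point. The theorem is precisely the assertion that $\mathcal{N}_1(S)$ is connected, so the entire content is to produce, for an arbitrary pair $a,b$ of nonseparating curves, a path between them in this graph.

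First I would reduce to the case where $a$ and $b$ are already in minimal position, and induct on the geometric intersection number $i(a,b)$. If $i(a,b)=0$ then $a$ and $b$ are disjoint; since $a$ is nonseparating, $S$ cut along $a$ is connected, and I can find a nonseparating curve $c$ meeting $a$ once and then perturb it to also meet $b$ once (using that the complement of $a\cup b$ still has enough room, i.e. a nonseparating arc exists), giving the length-two path $a,c,b$; if $i(a,b)=1$ there is nothing to prove. For the inductive step, suppose $i(a,b)\ge 2$. Two consecutive intersection points of $b$ with $a$ along $b$ cut off a bigon-free arc; performing the standard surgery on $b$ along a subarc of $a$ produces one or two simple closed curves, at least one of which, call it $b'$, is nonseparating and satisfies $i(a,b')<i(a,b)$ and $i(b,b')=0$ or $1$. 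I would then apply the induction hypothesis to the pair $(a,b')$ to connect $a$ to $b'$, and handle the step from $b'$ to $b$ either directly (if $i(b,b')\le 1$) or by the $i=0$ case already treated, after one more small surgery adjustment.

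The main obstacle is the bookkeeping in the surgery step: one must choose the surgery arc so that the resulting curve is genuinely nonseparating (a surgery on a nonseparating curve can yield a separating one, or a nullhomotopic one), and so that the intersection number with $a$ strictly drops while the intersection with $b$ stays controlled. The clean way to guarantee nonseparation is to track homology classes: since $[a]\ne 0$ and $[b]\ne 0$ in $H_1(\bar S;\mathbb{Z})$, one of the two curves produced by the oriented resolution of a crossing has nonzero homology class, and the oriented resolution also automatically reduces the count of intersection points. I would therefore phrase the surgery homologically from the start. The remaining cases $i(a,b)\in\{0,1\}$ are genuinely easy once one knows that the complement of a nonseparating curve (or of two curves meeting once) is connected and has positive genus or enough boundary, which is elementary Euler characteristic / classification-of-surfaces input. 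Since the paper cites \cite{km} for this theorem, I would of course defer to that reference for the full details rather than reproduce them here.
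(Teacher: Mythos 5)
The paper does not supply a proof of this statement; it is quoted verbatim from Korkmaz--McCarthy \cite{km} (Theorem 1.2) and used as a black box. So there is no ``paper's own proof'' to compare against, and your proposal has to stand on its own. On that footing, your strategy --- reformulate as connectivity of the graph of nonseparating curves with ``geometric intersection one'' edges, then induct on $i(a,b)$ using a surgery that drops the intersection number, with a homological argument guaranteeing that one of the surgered curves is nonseparating --- is the standard approach and is essentially sound.

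Two places need tightening. First, the base case $i(a,b)=0$ is dismissed too quickly: you must argue separately according to whether $a\cup b$ separates $S$. If $S\setminus(a\cup b)$ is connected, a single arc from $a$ to $b$ in the complement closes to the desired $c$; if it has two components $S_1,S_2$ (i.e.\ $[a]=\pm[b]$), you need one arc in each component from $a$ to $b$, glued into a loop. In either case $\hat i(c,a)=\pm1$ forces $[c]\neq 0$, so $c$ is nonseparating, but ``perturb $c$ to also meet $b$ once'' glosses over the separating case. Second, the inductive step is right in spirit but the quantitative claim ``$i(b,b')=0$ or $1$'' does not follow from the surgery as you phrased it. If you take the arc $\beta'$ of $b$ between two intersection points consecutive \emph{along $b$} and replace it by a complementary subarc $\alpha_j$ of $a$, you get $i(b',a)=0$ but only $i(b',b)\le i(a,b)-2$, not $\le 1$. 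What actually closes the induction is either (i) take intersection points consecutive \emph{along $a$}, set $c_j=\alpha'\cup\beta_j$, and then $i(c_j,b)=0$, $i(c_j,a)\le i(a,b)-2$, and $[c_1]+[c_2]=[b]\neq 0$ so one $c_j$ is nonseparating; or (ii) keep your surgery, note $[c_1]-[c_2]=\pm[a]\neq 0$ so one $c_j$ is nonseparating, connect $a$ to $c_j$ by the $i=0$ case, and connect $c_j$ to $b$ by strong induction on the intersection number since $i(c_j,b)<i(a,b)$. Either fix is routine, but as written the inequality you asserted for $b'$ is not correct.
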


It is known that for $g\geq 2$, the group $\mod (S)$ is generated Dehn twists about nonseparating
simple closed curves. If the surface is closed this is due to Dehn~\cite{dehn} and Lickorish~\cite{lick}.
We record this fact and three well--known relations among Dehn twists.

\begin{theorem}
  If  $g\geq 2$ then the mapping class group $\mod (S)$ is generated Dehn twists about nonseparating
  simple closed curves on $S$.
\end{theorem}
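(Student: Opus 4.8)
The plan is to reduce to the closed case, where the result is the classical theorem of Dehn~\cite{dehn} and Lickorish~\cite{lick}, and then to account for the marked points and boundary components one at a time by means of Birman-type exact sequences. Throughout, let $\Gamma\leq\mod(S)$ denote the subgroup generated by all Dehn twists about nonseparating simple closed curves; the goal is to show $\Gamma=\mod(S)$.

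For the closed surface $\bar S$ of genus $g\geq 2$, the group $\mod(\bar S)$ is generated by finitely many Dehn twists about nonseparating curves (Dehn, Lickorish); I would simply invoke this. Capping each boundary component of $S$ with a disk and forgetting the marked points gives a surjection $\rho:\mod(S)\to\mod(\bar S)$. Any nonseparating simple closed curve $\bar c\subset\bar S$ can be isotoped off the capping disks and marked points to a nonseparating curve $c\subset S$ with $\rho(t_c)=t_{\bar c}$ (and all nonseparating curves lie in a single $\mod(S)$--orbit by Theorem~\ref{thm:dualeqv}), so $\rho(\Gamma)=\mod(\bar S)$. It therefore suffices to show $\ker\rho\subseteq\Gamma$; and since $\rho$ factors as a finite composition of maps each of which either forgets a single marked point or caps a single boundary component, it is enough to handle each such elementary kernel.

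Forgetting one marked point sits in a Birman exact sequence whose kernel is the point--pushing subgroup, a quotient of the fundamental group of the (remaining) punctured surface. Since $g\geq 1$, that fundamental group is generated by nonseparating simple loops, and the point--push along such a loop $\gamma$ equals $t_{\gamma_1}t_{\gamma_2}^{-1}$, where $\gamma_1,\gamma_2$ are the boundary curves of an annular neighborhood of $\gamma$ and hence are again nonseparating; thus the point--pushing subgroup lies in $\Gamma$. Capping one boundary component $\delta$ with a disk (possibly carrying a marked point) has kernel generated by $t_\delta$ together with, in the marked case, a point--push already covered above. Finally I would rewrite $t_\delta$ as a product of nonseparating twists: using $g\geq 2$, embed in $S$ either a chain $c_1,\dots,c_{2g}$ of nonseparating curves with boundary $\delta$ and apply the chain relation $(t_{c_1}\cdots t_{c_{2g}})^{2g+1}=t_\delta$, or a four--holed sphere having $\delta$ as one of its boundary circles and all of its six other defining curves nonseparating, and apply the lantern relation to solve for $t_\delta$.

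The only genuinely non--formal step is the last one: a Dehn twist about a separating curve --- in particular a boundary--parallel twist --- is a priori not of the required form, and expressing it through a chain or lantern relation with all the other curves nonseparating is exactly the place where the hypothesis $g\geq 2$ is used. Everything else is bookkeeping with the Birman exact sequence and the change--of--coordinates principle for simple closed curves.
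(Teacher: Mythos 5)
The paper does not actually prove this statement; it records it as known, attributing the closed case to Dehn and Lickorish, and the subsequent remark in the paper explains why it fails for $g=1$ with several boundary components. So you are supplying a proof where the paper supplies a citation, and the question is whether your reduction to the closed case is sound. In outline it is: factoring the capping map through Birman exact sequences, writing point--pushes along simple loops as $t_{\gamma_1}t_{\gamma_2}^{-1}$, and eliminating boundary twists via the lantern relation is the standard argument, and the lantern step is exactly where $g\geq 2$ is used (which matches the paper's remark about $g=1$).

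Two inaccuracies are worth fixing. First, the chain--relation exponent is wrong: for an even chain $c_1,\dots,c_{2g}$ whose regular neighborhood has a single boundary circle $\delta$, the relation is $(t_{c_1}\cdots t_{c_{2g}})^{4g+2}=t_\delta$, not $(t_{c_1}\cdots t_{c_{2g}})^{2g+1}=t_\delta$. Your lantern alternative is the correct and cleaner route, so this is cosmetic, but you should use the right exponent or drop the chain option. Second, the assertion that for $g\geq 1$ the fundamental group of the (remaining) surface ``is generated by nonseparating simple loops'' is true but needs a sentence of justification: the standard free generating set includes loops encircling the other punctures and boundary circles, and those loops are \emph{separating}. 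One should replace each such generator $c$ by, say, $a_1c$, which still gives a generating set and is realized by a nonseparating simple closed curve precisely because $g\geq1$. Finally, the cleanest bookkeeping is to cap each boundary by a \emph{marked} disk first (kernel $\Z$, generated by $t_\delta$) and then forget marked points one at a time (kernel $\pi_1$); folding both effects into a single capping step, as your phrasing does, blurs which part of the kernel is the boundary twist and which is the point--push.
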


We note that the above theorem does not hold true for $g=1$. More precisely, if $S$ is a torus
with at least two boundary components, then Dehn twists about nonseparating simple closed curves
are not sufficient to generate $\mod(S)$: one also needs Dehn twists about the curves parallel to
the boundary components~\cite{gervais, k02tjm}.

\begin{lemma}
  Let $a$ and $b$ be two simple closed curves on $S$, and let $t_a$ and $t_b$
  denote the right Dehn twists about them.
  \begin{enumerate}
    \item If $a$ and $b$ are disjoint, then $t_a$ and $t_b$ commute.
    \item (\textbf{Braid relation}) If $a$ intersects $b$ transversely at one point, then they satisfy the
    braid relation $t_at_bt_a=t_bt_at_b$.
    \item (\textbf{Lantern relation})
    Consider a sphere $X$ with four boundary components $a,b,c,d$. Let $x,y,z$ be three simple closed curves on
    $X$ as shown in Figure~\ref{fig:lantern}. Then the Dehn twists about them satisfy the lantern relation
    \[t_a t_b t_c t_d = t_x t_y t_z.\]
  \end{enumerate}
\end{lemma}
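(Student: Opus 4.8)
The plan is to treat the three relations separately, in each case localising the assertion to a subsurface on which the relevant Dehn twists are supported. For (1), isotope $a$ and $b$ to be disjoint and pick disjoint closed annular neighbourhoods $N_a\supset a$ and $N_b\supset b$; by definition $t_a$ is represented by a diffeomorphism supported in $N_a$ and $t_b$ by one supported in $N_b$, and since $N_a\cap N_b=\emptyset$ these diffeomorphisms commute, hence so do the mapping classes $t_a$ and $t_b$.

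For (2), a closed regular neighbourhood $N$ of $a\cup b$ is a one-holed torus, inside which $t_a$ and $t_b$ are supported; extending diffeomorphisms and isotopies by the identity gives a homomorphism $\mod (N,\partial N)\to\mod (S)$ sending $t_a\mapsto t_a$ and $t_b\mapsto t_b$, so it is enough to establish the braid relation in $\mod (N,\partial N)$. Since any two pairs of simple closed curves meeting once in a one-holed torus are interchanged by a homeomorphism of that torus, I may take $(a,b)$ to be the standard pair, for which $t_a$ and $t_b$ act on $H_1(N;\Z)=\Z^2$ by $U$ and $\widehat U$. A nonseparating simple closed curve on a one-holed torus is determined up to isotopy by its primitive homology class up to sign, and $U\widehat U$ carries the class of $a$ to that of $-b$; hence $t_at_b(a)$ is isotopic to $b$. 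Combining this with the conjugation formula $h\,t_c\,h^{-1}=t_{h(c)}$ gives $(t_at_b)\,t_a\,(t_at_b)^{-1}=t_{t_at_b(a)}=t_b$, which rearranges to $t_at_bt_a=t_bt_at_b$. (If one prefers to avoid the homology bookkeeping, the same relation follows by running the Alexander method directly in $N$, checking that $t_at_bt_a$ and $t_bt_at_b$ act identically on a system of properly embedded arcs cutting $N$ into a disk.)

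For (3), I would work entirely inside the four-holed sphere $X$ with $\partial X$ fixed pointwise and apply the Alexander method: a self-homeomorphism of $X$ that fixes $\partial X$ pointwise and fixes, up to isotopy rel endpoints, every arc of a system of disjoint properly embedded arcs cutting $X$ into disks is isotopic rel $\partial X$ to the identity. Fixing such an arc system — for instance arcs joining the four boundary circles in a chain, so that $X$ becomes a disk with three subdisks removed — I would compute the images of each arc under $t_at_bt_ct_d$ and under $t_xt_yt_z$ read off from Figure~\ref{fig:lantern}, and check that they agree. I expect this verification to be the main obstacle, but it is a matter of bookkeeping rather than of ideas: the lantern relation is sensitive to the handedness of all seven twists, so the real work lies in drawing the arc diagrams correctly and tracking how many times each arc winds around each boundary component. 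Alternatively, the identity is classical — due to Dehn and rediscovered by Johnson — and could simply be quoted with reference to Figure~\ref{fig:lantern}.
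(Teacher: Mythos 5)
The paper does not prove this lemma: it is presented explicitly as a record of standard facts, covered by the general references to Ivanov \cite{iv} and Farb--Margalit \cite{fm} given at the head of Section~\ref{sec:mcg}, so there is no argument in the paper to compare against, and supplying a self-contained proof is a reasonable thing to do. Your argument for (1) is the standard disjoint-support observation and is correct. For (2) you localise to the one-holed torus $N$, compute on $H_1(N)$ that $t_at_b$ carries $[a]$ to $\mp[b]$, invoke the classification of essential simple closed curves on a one-holed torus by primitive homology class up to sign to conclude that $t_at_b(a)$ is isotopic to $b$, and then expand the conjugation identity $(t_at_b)\,t_a\,(t_at_b)^{-1}=t_{t_at_b(a)}=t_b$; this is correct and is a clean route to the braid relation.

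For (3) you only outline the Alexander-method verification on the four-holed sphere and do not carry it out, so as a self-contained proof it is incomplete; the method you propose is the right one, and indeed essentially the only elementary one, since in a four-holed sphere every pair of simple closed curves has zero algebraic intersection number, so all seven twists act trivially on $H_1(X)$ and homology cannot see the relation. Your fallback of simply citing the relation, attributed to Dehn and rediscovered by Johnson, is exactly the stance the paper itself takes. If you did carry out the Alexander-method check you would need Figure~\ref{fig:lantern} to pin down which interior curves $x,y,z$ are meant and the handedness of all seven twists, as you correctly anticipate.
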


\begin{figure}[hbt]
 \begin{center}
      \includegraphics[width=4cm]{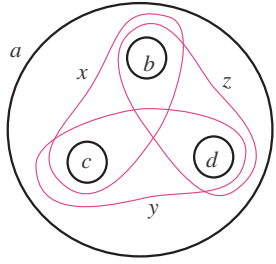}
      \caption{The curves of the lantern relation.}
      \label{fig:lantern}
   \end{center}
 \end{figure}

We remind that the first homology group $H_1(G;\Z)$ of a group $G$ is isomorphic to
the abelianization $G/[G,G]$, where $[G,G]$ is the (normal) subgroup of
$G$ generated by all commutators $xyx^{-1}y^{-1}$, where $x,y\in G$.

\begin{theorem}  \label{thm:H-1} $($\cite{k02tjm}, {\rm Theorem} $5.1)$
  If $g\geq 2$ then first homology group
  $H_1(\mod (S);\Z)$ is
  \begin{itemize}
    \item[(1)] trivial if $g\geq 3$, and
    \item[(2)]  isomorphic to the cyclic group of order $10$ if $g=2$.
  \end{itemize}
\end{theorem}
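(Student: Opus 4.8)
The plan is to show first that $H_1(\mod(S);\Z)$ is cyclic, generated by the class of a single Dehn twist, and then to compute its order, which requires separate treatment of $g\ge 3$ and $g=2$.

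\emph{Cyclicity.} Since $g\ge 2$, the group $\mod(S)$ is generated by Dehn twists about nonseparating simple closed curves, so $H_1(\mod(S);\Z)$ is generated by the classes $[t_c]$ of such twists. If $a$ and $b$ are nonseparating and meet transversely at a single point, the braid relation $t_at_bt_a=t_bt_at_b$ abelianizes to $2[t_a]+[t_b]=[t_a]+2[t_b]$, hence $[t_a]=[t_b]$. By Theorem~\ref{thm:dualeqv}, any two nonseparating simple closed curves are joined by a chain of nonseparating curves each meeting the next exactly once, so all the classes $[t_c]$ coincide with a single element $x\in H_1(\mod(S);\Z)$, and $H_1(\mod(S);\Z)=\langle x\rangle$.

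\emph{The case $g\ge 3$.} Here I would use the lantern relation to force $x=0$. The one genuinely topological ingredient is that, when $g\ge 3$, there is an embedding of a four--holed sphere $X$ in $S$ (one may place it inside a genus--$3$ subsurface disjoint from $\partial S$ and the marked points) for which all seven curves $a,b,c,d,x,y,z$ of Figure~\ref{fig:lantern} are nonseparating in $S$; no such embedding exists when $g=2$, which is exactly why that case behaves differently. Producing this embedding is the main obstacle in this case. Granting it, the lantern relation $t_at_bt_ct_d=t_xt_yt_z$ abelianizes to $4x=3x$, so $x=0$ and $H_1(\mod(S);\Z)=0$.

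\emph{The case $g=2$.} Now one must pin down the order of $x$ exactly. For $S=\bar S$ closed of genus $2$ I would invoke the classical finite presentation of $\mod(\bar S)$ (Bergau--Mennicke, Birman--Hilden) on Dehn twist generators $t_{c_1},\dots,t_{c_5}$ along a chain of nonseparating curves, with relations the commutations and braid relations among the $t_{c_i}$, the chain relation $(t_{c_1}t_{c_2}t_{c_3}t_{c_4}t_{c_5})^6=1$, and the hyperelliptic relation $(t_{c_1}t_{c_2}t_{c_3}t_{c_4}t_{c_5}^2t_{c_4}t_{c_3}t_{c_2}t_{c_1})^2=1$ (plus the centrality of the bracketed involution). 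Abelianizing, every generator becomes $x$, the commutation and braid relations become vacuous, the chain relation gives $30x=0$, and the hyperelliptic relation gives $20x=0$; hence $H_1(\mod(\bar S);\Z)$ is cyclic of order $\gcd(30,20)=10$. For a genus--$2$ surface $S$ with boundary or marked points, the surjection $\mod(S)\to\mod(\bar S)$ shows the order of $x$ is a multiple of $10$, while the reverse bound comes from chain and hyperelliptic relations taken relative to the boundary together with the observation that point--pushing maps, being products $t_at_b^{-1}$ of twists about nonseparating curves, are trivial in $H_1$; so $H_1(\mod(S);\Z)\cong\Z/10\Z$ in every case. Here the substantive point — and the main obstacle — is having the hyperelliptic relation available, since this is what sharpens $30x=0$ to order exactly $10$; the homology computation itself is then the short abelianization argument above.
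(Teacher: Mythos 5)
The paper does not prove this statement; it is quoted from \cite{k02tjm} (Theorem~5.1), so there is no ``paper's proof'' to compare against. Your argument is essentially the standard one, and the main steps are all correct: the braid relation abelianizes to force all nonseparating twist classes to coincide (with Theorem~\ref{thm:dualeqv} providing the connectivity), the lantern relation with seven nonseparating curves (available exactly when $g\ge 3$) kills the generator, and for the closed genus-$2$ surface the chain relation $30x=0$ and the hyperelliptic relation $20x=0$ pin down $\Z/10$ once the Bergau--Mennicke/Birman--Hilden presentation is granted.

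The only place you are slightly underspecified is the passage from the closed genus-$2$ case to genus $2$ with boundary and/or marked points. Saying ``chain and hyperelliptic relations taken relative to the boundary'' hides a real step: with boundary present, the chain relation reads $(t_{c_1}\cdots t_{c_5})^6=t_{\partial_1}t_{\partial_2}$ for the two boundary circles of a neighborhood of the chain, so one must still show that the classes of these (boundary-parallel or separating) twists vanish in $H_1$. The clean route is the one you hint at with the point-pushing remark: use the two Birman exact sequences to reduce to the closed case, observing that (a) point-pushing around a nonseparating loop is $t_{\gamma_1}t_{\gamma_2}^{-1}$ with $\gamma_1,\gamma_2$ nonseparating, hence dies in $H_1$, and (b) the class of a boundary twist $t_d$ dies because, even in genus $2$ with boundary, one can embed a lantern with $d$ as one boundary circle and the remaining six curves nonseparating, so the lantern relation abelianizes to $3x+[t_d]=3x$. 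With those two points made explicit, the argument is complete.
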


If $g\geq 3$ then the group $\mod(S)$ is perfect. In the case $g=2$, $\mod(S)$ is not perfect
but its commutator subgroup is perfect.

\begin{theorem}  \label{thm:G'prf} $($\cite{km}, {\rm Theorem} $4.2)$
  If $g\geq 2$ then the commutator subgroup of $\mod (S)$ is perfect.
\end{theorem}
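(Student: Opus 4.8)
\demo{Proof proposal}
The plan is to dispose of $g\geq3$ at once and then focus on $g=2$. For $g\geq3$, Theorem~\ref{thm:H-1} gives $H_1(\mod(S);\Z)=0$, so $\mod(S)$ is perfect: it equals its own commutator subgroup, which is therefore perfect. Thus the entire content of the statement is the case $g=2$. Write $G=\mod(S)$ and $G'=[G,G]$; by Theorem~\ref{thm:H-1} the subgroup $G'$ has index $10$ in $G$, and the goal is to show $H_1(G';\Z)=0$, i.e. that every element of $G'$ is a product of commutators of elements of $G'$ (not merely of $G$).

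For $g=2$ I would argue through an explicit generating set. Fix the generation of $G$ by the five Dehn twists $c_1,\dots,c_5$ along a chain of nonseparating curves, together with the Birman--Hilden presentation (braid relations on adjacent twists, commuting relations on non--adjacent ones, the chain relation on $c_1c_2c_3c_4c_5$, and the hyperelliptic relations). Since all the $c_i$ have the same image $t$ in $G/G'\cong\Z/10$, the powers $1,c_1,\dots,c_1^{9}$ form a Schreier transversal for $G'$, and Reidemeister--Schreier yields a finite generating set of $G'$: the element $c_1^{10}$ and the elements $c_1^{k}c_ic_1^{-(k+1)}$ for $2\leq i\leq5$, $0\leq k\leq9$. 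Using the identity $c_1^{k}c_ic_1^{-(k+1)}=[c_1^{k},c_i]\,(c_ic_1^{-1})$, the problem reduces to exhibiting the ``atoms'' $c_ic_1^{-1}$ ($2\le i\le5$), $[c_1^{k},c_i]$ ($k\ge1$), and $c_1^{10}$ as products of commutators of elements of $G'$.

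To treat the atoms I would use the standard relations among twists. The braid relation $c_1c_2c_1=c_2c_1c_2$ rewrites $[c_1,c_2]$ and all its $c_1$--conjugates in terms of $c_1,c_2$ alone (the computation in $B_3$), and the commuting relations make $[c_1^{k},c_i]=1$ whenever $c_i$ is disjoint from $c_1$; so the genuine targets are the four twist quotients $c_ic_1^{-1}$ and the power $c_1^{10}$. For a twist quotient I would embed a four--holed sphere in $S$ on which the lantern relation $t_at_bt_ct_d=t_xt_yt_z$ involves as many nonseparating curves as the genus--$2$ topology allows (not all seven, for that would force $t=0$ in $\Z/10$), and then solve the lantern relation for that twist quotient, absorbing the leftover separating twist into a chain relation such as $(c_1c_2)^6=t_\delta$. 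For the power $c_1^{10}$ I would use the chain relation $(c_1c_2c_3c_4c_5)^6=1$ together with the hyperelliptic relations to express it in terms of the twist quotients already handled (and, if needed, powers of separating twists, which are dealt with by the same kind of lantern argument); the central hyperelliptic involution, which fixes every isotopy class of curve, can be used throughout to cut down the number of cases. An alternative, tidier bookkeeping would be to exhibit a single perfect subgroup $H\leq G'$ whose normal closure in $G'$ equals $G'$: then $(G')^{ab}$ is generated by the image of $H$, which is a quotient of $H^{ab}=0$, hence $(G')^{ab}=0$. A natural place to look for $H$ is among subgroups generated by twists supported on proper subsurfaces of $S$, perfected via the chain and lantern relations.

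The main obstacle is precisely this genus--$2$ bookkeeping: rewriting the Reidemeister--Schreier generators --- above all $c_1^{10}$, which amounts to making the $10$--torsion disappear, and the ``off--diagonal'' conjugates $c_1^{k}c_ic_1^{-(k+1)}$ --- as products of commutators of elements of the \emph{commutator} subgroup rather than of $\mod(S)$. The difficulty is intrinsic, not a matter of cleverness with conjugation: the purely homological input is too weak, since the five--term exact sequence of $1\to G'\to G\to\Z/10\to1$ shows only that the $\Z/10$--coinvariants of $H_1(G';\Z)$ vanish, which does not force $H_1(G';\Z)=0$. I expect the bulk of the proof to be the verification of the atomic cases and, within that, the search for the most economical curve configurations making the lantern and chain relations do the work.
\enddemo
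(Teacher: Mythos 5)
The paper does not give a proof of this statement; it simply cites Theorem~4.2 of~\cite{km}. Your reduction is correct as far as it goes: for $g\geq 3$, Theorem~\ref{thm:H-1} makes $\mod(S)$ perfect and there is nothing to prove, and for $g=2$ you rightly note that the five-term exact sequence of $1\to G'\to G\to\Z/10\to 1$ only forces the $\Z/10$-coinvariants of $H_1(G';\Z)$ to vanish, so explicit relation-chasing cannot be avoided. But for $g=2$ what you have written is a plan, not a proof. The Reidemeister--Schreier setup is fine (modulo the small normalization that the $k=9$ generators are $c_1^{9}c_i$, which differ from your $c_1^{9}c_ic_1^{-10}$ by the other generator $c_1^{10}$), yet the decisive step --- writing $c_1^{10}$ and each $c_1^{k}c_ic_1^{-(k+1)}$ as a product of commutators of elements of $G'$, not merely of $G$ --- is only gestured at: no four-holed sphere is actually embedded, no lantern identity is written out, nothing is verified. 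As you yourself acknowledge, that verification is the bulk of the argument, and until it is produced the proof does not close.

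You are also missing a large simplification that is available from the very reference the paper cites. By Theorem~\ref{thm:G'}, $G'$ is the normal closure in $G$ of a single element $t_at_b^{-1}$, with $a,b$ nonseparating and meeting once. Since $[G',G']=G^{(2)}$ is fully invariant in $G$ it is normal in $G$, so $G'\subseteq[G',G']$ the moment one exhibits $t_at_b^{-1}\in[G',G']$ for a single such pair $(a,b)$. This collapses your forty-odd Reidemeister--Schreier atoms, including the troublesome $c_1^{10}$, to one target, which is exactly where a lantern relation (supplemented, if needed, by the genus-two chain relation) should be aimed. Your ``tidier'' alternative --- a perfect $H\leq G'$ whose normal closure in $G'$ is $G'$ --- is a correct variant of the same idea, and by the same normality observation it is enough for the normal closure to be taken in $G$; but again you do not actually produce such an $H$. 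So the strategy is sound in outline, but the essential genus-two identity that makes it work is absent.
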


\begin{theorem}  \label{thm:G'} $($\cite{km}, {\rm Theorem} $2.7)$
Let  $g\geq 2$ and let $a$ and $b$ be two nonseparating simple closed curves on $S$ intersecting
at one point. Then the commutator subgroup of $\mod (S)$ is generated normally by $t_at_b^{-1}$.
\end{theorem}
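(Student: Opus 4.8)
\textit{Proof proposal.} The plan is to show that the normal closure $N$ of $t_at_b^{-1}$ in $\mod(S)$ is exactly the commutator subgroup $[\mod(S),\mod(S)]$, by establishing the two inclusions separately. For the inclusion $N\subseteq [\mod(S),\mod(S)]$ it suffices, since the commutator subgroup is normal, to check that $t_at_b^{-1}$ is itself a commutator. This follows from the braid relation: since $a$ meets $b$ in one point we have $t_at_bt_a=t_bt_at_b$, hence $(t_at_b)\,t_a\,(t_at_b)^{-1}=t_at_bt_at_b^{-1}t_a^{-1}=(t_bt_at_b)t_b^{-1}t_a^{-1}=t_b$; taking inverses gives $(t_at_b)\,t_a^{-1}\,(t_at_b)^{-1}=t_b^{-1}$, and therefore
\[
[t_a,\,t_at_b]=t_a\,(t_at_b)\,t_a^{-1}\,(t_at_b)^{-1}=t_a t_b^{-1}.
\]

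For the reverse inclusion I would pass to the quotient group $Q=\mod(S)/N$, writing $\bar x$ for the image of $x\in\mod(S)$. The heart of the argument is the claim that $\bar t_c$ is one and the same element of $Q$ for every nonseparating simple closed curve $c$ on $S$. To see this, given two such curves $c$ and $c'$, apply Theorem~\ref{thm:dualeqv} to obtain a chain $c=c_0,c_1,\dots,c_k=c'$ of nonseparating simple closed curves in which $c_{i-1}$ meets $c_i$ in a single point. By the change of coordinates principle there is an orientation–preserving self--homeomorphism $f$ of $S$ carrying the ordered pair $(a,b)$ to $(c_{i-1},c_i)$; since $f t_x f^{-1}=t_{f(x)}$, the element $t_{c_{i-1}}t_{c_i}^{-1}=f(t_at_b^{-1})f^{-1}$ is conjugate to $t_at_b^{-1}$ and hence lies in $N$. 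Thus $\bar t_{c_{i-1}}=\bar t_{c_i}$ for every $i$, and by transitivity $\bar t_c=\bar t_{c'}$.

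Granting the claim, fix a nonseparating simple closed curve and let $\tau$ denote the common image of all such Dehn twists in $Q$. Since $g\geq 2$, the group $\mod(S)$ is generated by Dehn twists about nonseparating simple closed curves, so $Q$ is generated by the single element $\tau$. Hence $Q$ is cyclic, in particular abelian, which forces $[\mod(S),\mod(S)]\subseteq N$. Together with the first inclusion this yields $N=[\mod(S),\mod(S)]$, as desired.

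The step I expect to require the most care is the change of coordinates input: one must verify that any two pairs of nonseparating simple closed curves, each pair meeting in exactly one point, are equivalent under an orientation--preserving self--homeomorphism of $S$ — equivalently, that the complements of the two pairs are homeomorphic by a homeomorphism respecting the marked points and boundary. This is standard (a regular neighbourhood of two transverse curves meeting once is a one--holed torus, and the homeomorphism type of what remains is determined by $g$ together with the marked point and boundary data), but it is where essentially all of the surface topology enters, and it is the only place where the hypothesis "intersecting at one point" is genuinely used.
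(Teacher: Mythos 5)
Your proof is correct. Since the paper does not prove Theorem~\ref{thm:G'} but cites it from \cite{km}, there is no in-paper proof to compare against; but your argument is sound and uses exactly the ingredients the paper marshals elsewhere: the braid relation to exhibit $t_at_b^{-1}$ as the single commutator $[t_a,\,t_at_b]$, Theorem~\ref{thm:dualeqv} together with the change of coordinates principle to show that $t_{c}t_{c'}^{-1}\in N$ for every pair $c,c'$ of nonseparating curves meeting once, and the fact (stated in Section~\ref{sec:mcg} and valid precisely when $g\geq 2$) that $\mod(S)$ is generated by Dehn twists about nonseparating curves, so that $\mod(S)/N$ is cyclic. Note in particular that this same pattern of reasoning underlies Lemma~\ref{lem:abelianrep} and Lemma~\ref{lem:E-a=E-b} in the paper, so you have reconstructed the standard argument; the one point you correctly flag as needing care, namely that the change-of-coordinates diffeomorphism can be taken to fix the boundary and marked points pointwise, is indeed where the classification of surfaces does its work.
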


Note that since any two Dehn twists about nonseparating simple closed curves are conjugate in $\mod (S)$,
they represent the same element in the group $H_1(\mod (S);\Z)$. In particular, we conclude the next lemma.

\begin{lemma}
\label{lem:abelianrep}
Let $g\geq 1$, and let $b$ and $c$ be two nonseparating simple closed curves on $S$.
If $H$ is an abelian group and if $\phi :\mod (S)\to H$ is a homomorphism, then $\phi (t_b)= \phi (t_c)$.
\end{lemma}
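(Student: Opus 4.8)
The plan is to reduce the statement to the two facts already established just before it: that any two Dehn twists about nonseparating simple closed curves are conjugate in $\mod(S)$ (which follows from Theorem~\ref{thm:dualeqv}, since a chain of curves meeting pairwise in one point gives a chain of conjugate twists via the change-of-coordinates principle), and that the abelianization of a group kills conjugation.

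First I would take a homomorphism $\phi:\mod(S)\to H$ with $H$ abelian. Since $H$ is abelian, $\phi$ factors through the abelianization $\mod(S)/[\mod(S),\mod(S)] = H_1(\mod(S);\Z)$; equivalently, $\phi(xyx^{-1}) = \phi(y)$ for all $x,y\in\mod(S)$, because $\phi(xyx^{-1}y^{-1})=1$. Next I would invoke Theorem~\ref{thm:dualeqv} applied to the two nonseparating curves $b$ and $c$: there is a sequence $b=a_0,a_1,\ldots,a_k=c$ of nonseparating simple closed curves with $a_{i-1}$ meeting $a_i$ in exactly one point. For each consecutive pair $a_{i-1},a_i$, the braid relation gives $t_{a_{i-1}}t_{a_i}t_{a_{i-1}} = t_{a_i}t_{a_{i-1}}t_{a_i}$, which rearranges to $t_{a_{i-1}} = (t_{a_i}t_{a_{i-1}})\, t_{a_i}\, (t_{a_i}t_{a_{i-1}})^{-1}$ — actually more simply $t_{a_i} = t_{a_{i-1}} t_{a_{i-1}} t_{a_i} t_{a_{i-1}}^{-1} t_{a_{i-1}}^{-1}$ rearranged, so in any case $t_{a_{i-1}}$ and $t_{a_i}$ are conjugate in $\mod(S)$. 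Hence $\phi(t_{a_{i-1}}) = \phi(t_{a_i})$ for every $i$, and chaining these equalities along the sequence yields $\phi(t_b) = \phi(t_{a_0}) = \phi(t_{a_1}) = \cdots = \phi(t_{a_k}) = \phi(t_c)$.

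I do not anticipate a genuine obstacle here; the lemma is essentially bookkeeping on top of Theorem~\ref{thm:dualeqv} and the braid relation. The only point requiring a little care is the case $g=1$: Theorem~\ref{thm:dualeqv} is stated for $g\geq 1$, so the connecting sequence of nonseparating curves exists, and the braid relation holds whenever two curves meet once, so the argument goes through verbatim without needing $\mod(S)$ to be generated by nonseparating twists. The statement is therefore immediate once the conjugacy of consecutive twists in the chain is recorded.
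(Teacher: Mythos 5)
Your proof is correct and rests on the same two facts the paper uses: nonseparating Dehn twists are conjugate, and conjugate elements become equal under any homomorphism to an abelian group. The only difference is how you establish the conjugacy. The paper simply invokes the change-of-coordinates principle directly: for any two nonseparating simple closed curves $b$ and $c$ there is an orientation-preserving self-diffeomorphism $f$ of $S$ with $f(b)=c$, whence $f t_b f^{-1} = t_c$, so no chain and no braid relation are needed. You instead route through Theorem~\ref{thm:dualeqv} to produce a chain $b=a_0,\ldots,a_k=c$ with consecutive intersection one, and then use the braid relation $t_{a_{i-1}}t_{a_i}t_{a_{i-1}}=t_{a_i}t_{a_{i-1}}t_{a_i}$ to conjugate along the chain. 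That works — your first rearrangement $t_{a_{i-1}} = (t_{a_i}t_{a_{i-1}})\,t_{a_i}\,(t_{a_i}t_{a_{i-1}})^{-1}$ is correct — but it is a longer path to a fact that change-of-coordinates gives in one step, and one could say it slightly inverts the logic, since the braid relation is itself usually derived from $f t_a f^{-1}=t_{f(a)}$. One small slip: your ``more simply'' rewrite $t_{a_i} = t_{a_{i-1}}^2 t_{a_i} t_{a_{i-1}}^{-2}$ is not a consequence of the braid relation and is false in general; fortunately you do not need it, as the preceding correct identity already gives conjugacy.
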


Let us write $S=S_{g,r}^p$, for the moment, for the surface of genus $g\geq 2$ with $p\geq 0$ boundary
and with $r\geq 0$ marked points in the interior. For $r\geq 1$ by forgetting one of the marked points,
we get a short exact sequence,
\begin{equation}\label{eqn:exsqn1}
  1\longrightarrow \pi_1 (S_{g,r-1}^{p})  \longrightarrow   \mod (S_{g,r}^{p}) \longrightarrow
   \mod (S_{g,r-1}^{p})\longrightarrow 1,
\end{equation}
where the map from the fundamental group is obtained by pushing the base point along the given path.

For $p\geq 1$, by blowing down a given boundary component $d$ to a marked point,
we get a short exact sequence
\begin{equation}\label{eqn:exsqn1}
  1\longrightarrow \Z  \longrightarrow   \mod (S_{g,r}^{p}) \longrightarrow
   \mod (S_{g,r+1}^{p-1})\longrightarrow 1,
\end{equation}
where the group $\Z$ is generated by the Dehn twist $t_d$ about the boundary component $d$.
These two exact sequences are called \emph{Birman's exact sequences}.

\begin{figure}[hbt]
 \begin{center}
      \includegraphics[width=10cm]{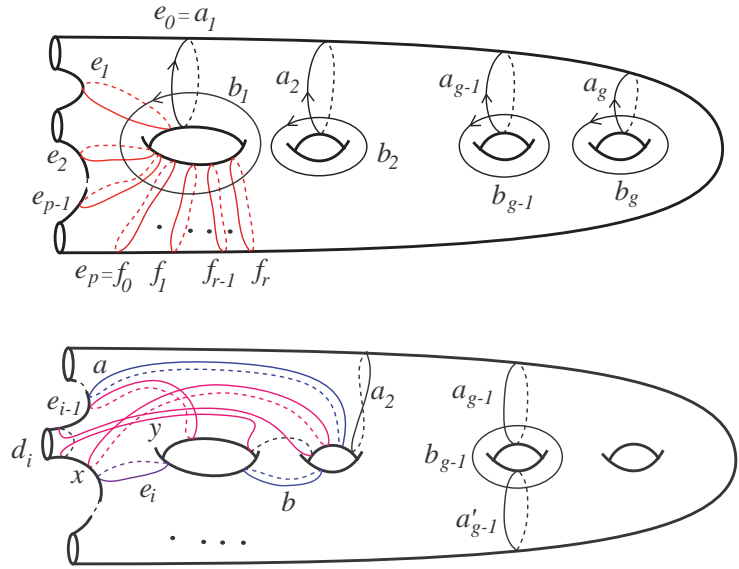}
      \caption{Various curves on the surface and a lantern.}
      \label{abcurves}
   \end{center}
 \end{figure}

Let $g\geq 2$, and let $\bar S=S_{g,0}^0$ be the closed surface obtained from $S$ by gluing a disk along
each boundary component. Let $\pi:\mod (S)\to\mod (\bar S)$ be the natural
surjective homomorphism obtained by extending
diffeomorphism $S\to S$ to $\bar S\to \bar S$ by the identity on the disks glued.
Let $d_1,d_2,\ldots,d_p$ be (the simple closed curves parallel to) the boundary components of $S$. Consider the
simple closed curves $e_0,e_1,\ldots, e_p$, $f_0,f_1,\ldots, f_r$ and $a'_{g-1}$ illustrated in
Figure~\ref{abcurves}. Hence, the union $d_i \cup e_{i-1} \cup e_i$ bounds a pair of pants, $f_{j-1}\cup f_j$
bounds an annulus with a marked point, and  $a_{g-1}\cup a'_{g-1}$ bounds a subsurface diffeomorphic to a torus
with two boundary components.
Clearly, for $i=1,2,\ldots,p$ and $j=1,2,\ldots,r$, the mapping classes $t_{d_i}, t_{e_{i-1}}t_{e_i}^{-1}$
and $t_{f_{j-1}}t_{f_j}^{-1}$ are contained in the kernel of $\pi$.
It can be shown by using Birman's exact sequences that the kernel of $\pi$ is generated normally
by
\begin{equation*}
\{ t_{d_i}, t_{e_{i-1}}t_{e_i}^{-1}, t_{f_{j-1}}t_{f_j}^{-1} : 1\leq i\leq p, 1\leq j\leq r\}.
\end{equation*}
We state this fact as a proposition which will be useful for us. We also show that we may omit the Dehn twists
about the boundary parallel curves.

\begin{proposition} \label{prop:kernel}
The kernel of the natural surjective homomorphism $\pi:\mod (S)\to\mod (\bar S)$ is generated normally by
the set
\begin{equation}\label{eqn:ngenset}
\{ t_{e_{i-1}}t_{e_i}^{-1}, t_{f_{j-1}}t_{f_j}^{-1} : 1\leq i\leq p, 1\leq j\leq r\}.
\end{equation}
\end{proposition}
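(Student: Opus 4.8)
The plan is to reduce the statement to showing that each boundary twist $t_{d_i}$ lies in the normal closure $N$ of the set~\eqref{eqn:ngenset} in $\mod(S)$, and then to produce $t_{d_i}$ there by means of a lantern relation. First note that $N$ is contained in the kernel of $\pi$, since each generator listed in~\eqref{eqn:ngenset} was already observed to lie in that kernel. For the reverse inclusion, recall from the paragraph preceding the statement that, by iterated use of the two Birman exact sequences, the kernel of $\pi$ is generated normally by~\eqref{eqn:ngenset} together with $\{t_{d_1},\dots,t_{d_p}\}$: the $\Z$-summands in the Birman sequences account for the $t_{d_i}$, while the point-pushing subgroups account for the twist differences. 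Hence it suffices to prove that $t_{d_i}\in N$ for every $i$.

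Next I would record what the defining relations of $N$ say in the quotient $\mod(S)/N$: they force $t_{e_0}\equiv t_{e_1}\equiv\cdots\equiv t_{e_p}$ and $t_{f_0}\equiv t_{f_1}\equiv\cdots\equiv t_{f_r}$ modulo $N$. Since the innermost curves $e_0$ and $f_0$ bound disks in $S$ (the latter possibly containing a single marked point), their Dehn twists are trivial, and therefore $t_{e_j}\in N$ and $t_{f_k}\in N$ for all $j$ and $k$. More generally, the twist about any simple closed curve bounding a disk-with-punctures inside the planar part of $S$ lies in $N$, since after an isotopy moving the enclosed punctures such a curve becomes one of the $e_j$ or $f_k$ up to conjugacy.

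Finally, for the boundary component $d_i$, I would apply the lantern relation to the embedded four-holed sphere drawn in Figure~\ref{abcurves}: it has $d_i$ among its four boundary curves, and the other six of its seven curves are isotopic in $S$ to curves of the form $e_j$, $f_k$, or $d_\ell$ with $\ell<i$. The lantern relation then expresses $t_{d_i}$ as a product of Dehn twists about these curves; passing to $\mod(S)/N$ and inducting on $i$ — with base case $d_1$, which is isotopic to $e_1$ (as $e_0$ bounds a disk), so that $t_{d_1}=t_{e_1}\in N$ — shows that all the remaining factors vanish modulo $N$, whence $t_{d_i}\in N$ and the proof is complete.

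The step I expect to be the main obstacle is this last one: one must choose the four-holed sphere carefully so that all seven curves occurring in the lantern relation really are (isotopic to) named chain curves or previously handled boundary curves. A lantern relation containing $t_{d_i}$ naively tends to introduce separating curves enclosing non-initial subsets of the boundary components, and it is precisely the layout of the curves $e_j$ in Figure~\ref{abcurves}, together with peeling the boundary components off one at a time, that avoids this. Verifying these isotopies, and the disk-with-punctures assertion of the second step, are the essential but routine checks; the rest of the argument is formal.
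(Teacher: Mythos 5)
Your reduction — that it suffices to show each $t_{d_i}$ lies in the normal closure $N$ of the listed twist differences — is exactly the paper's reduction, and the decision to produce $t_{d_i}$ from a lantern relation is also the paper's strategy. But the step you flag as ``the main obstacle'' is where the proposal genuinely breaks down, and the paper resolves it with a different idea that you did not find. You want a four-holed sphere containing $d_i$ all seven of whose curves are isotopic to named chain curves $e_j$, $f_k$, or to earlier boundaries $d_\ell$; such a lantern does not exist in general, and you give no construction. The lantern drawn in Figure~\ref{abcurves} has boundary curves $d_i, e_i, a, b$ and interior curves $e_{i-1}, x, y$, where $a, b, x, y$ are \emph{not} named chain curves. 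The paper's observation is that one does not need them to be: after rewriting the lantern relation as
\[
t_{d_i} = \bigl(t_{e_{i-1}} t_{e_i}^{-1}\bigr)\bigl(t_{x} t_{a}^{-1}\bigr)\bigl(t_{y} t_{b}^{-1}\bigr),
\]
one checks that the pair $(x,a)$ (and likewise $(y,b)$) can be carried to $(e_{i-1},e_i)$ by a self-diffeomorphism of $S$, so each of $t_x t_a^{-1}$ and $t_y t_b^{-1}$ is \emph{conjugate} in $\mod(S)$ to $t_{e_{i-1}} t_{e_i}^{-1}$ and hence already lies in the normal closure $N$. This conjugation trick is the missing ingredient; it makes the proof uniform in $i$ and avoids any induction.

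A secondary point: your intermediate claim that $t_{e_j}\in N$ and $t_{f_k}\in N$ for all $j,k$ rests on $e_0$ and $f_0$ bounding (possibly once-punctured) disks in $S$. That is a figure-dependent assertion which the paper never needs; the paper's argument uses only that the elements $t_{e_{i-1}}t_{e_i}^{-1}$ lie in $N$ by definition, never that any individual $t_{e_j}$ is trivial. Even if the disk claim happens to hold for Figure~\ref{abcurves}, building the proof on it makes the argument more fragile and does not circumvent the lantern obstacle above.
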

\begin{proof}
For the proof, it suffices to prove that each $t_{d_i}$ can be written as a product
of conjugates of elements in~\eqref{eqn:ngenset}. We use the lantern relation for this.
Consider the curves in the bottom picture of Figure~\ref{abcurves}. The curves $d_i, e_i, a,b$ bound a sphere
with four boundary components. The curves on this sphere satisfy the lantern relation
\begin{equation*}
    t_{d_i} t_{e_i} t_{a} t_{b} = t_{e_{i-1}} t_{x} t_{y},
\end{equation*}
which may be rewritten as
\begin{equation*}
    t_{d_i}  = (t_{e_{i-1}}t_{e_i}^{-1})  (t_{x}t_{a}^{-1}) (t_{y} t_{b}^{-1} ).
\end{equation*}
Since the pair $(x,a)$ can be mapped $(e_{i-1},e_i)$ by a diffeomorphism of $S$,
it follows that the element $t_{x}t_{a}^{-1}$ is conjugate to $t_{e_{i-1}}t_{e_i}^{-1}$.
By the similar reasoning, $t_{y} t_{b}^{-1}$ is also conjugate to $t_{e_{i-1}}t_{e_i}^{-1}$.
\end{proof}

\bigskip


\section{Eigenvalues and eigenspaces of $\phi(t_a)$}
\label{sec:eigvalues}

Let $\phi:\mod(S) \to \GL(m,\C)$ be a homomorphism. For a simple closed curve $a$ on $S$,
we denote by $L_a$ the image $\phi (t_a)$ of the Dehn twist $t_a$. If $\lambda$ is an eigenvalue of
$L_a$, the eigenspace corresponding to $\lambda$ is denoted by $E^a_\lambda$.

For each $i=1,2,\ldots , g$, let $a_i$ and $b_i$ be the nonseparating simple closed curves on $S$
shown is Figure~\ref{abcurves}. We may consider them on $\bar S$ as well, where $\bar S$ is
the closed surface obtained from $S$ by gluing a disk along each boundary component.
The homology classes of the orient curves $a_i$ and $b_i$ form a basis for the first homology group
$H_1(\bar S,\Z)$ of $\bar S$.  In order to avoid double subscript,
we write $L_{2i-1}=\phi (t_{a_i})$ and $L_{2i}=\phi (t_{b_i})$.

For an eigenvalue $\lambda$ of a matrix $M$, let $\lambda_\# (M)$ denote the multiplicity of $\lambda$
in the characteristic polynomial of $M$. We will omit $M$ from the notation and write $\lambda_\#$ only;
the matrix $M$ will always be clear from the context.

\medskip

\begin{lemma}
Let $L$ and $M$ be two linear automorphisms of $\C^m$ with $LM=ML$. If $\lambda$ is an eigenvalue of
$L$, then $\ker (L-\lambda I)^k$ is $M$--invariant for all $k\geq 1$. In particular, $E_\lambda= \ker (L-\lambda I)$
is $M$--invariant.
\end{lemma}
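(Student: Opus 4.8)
The plan is to use the standard fact from linear algebra that generalized eigenspaces of a linear operator are preserved by any operator commuting with it. Concretely, I would argue as follows. Suppose $v \in \ker(L - \lambda I)^k$, so that $(L - \lambda I)^k v = 0$. Since $L$ and $M$ commute, the operators $L - \lambda I$ and $M$ also commute, and hence $(L - \lambda I)^k$ and $M$ commute as well. Therefore
\[
(L - \lambda I)^k (Mv) = M\, (L - \lambda I)^k v = M \cdot 0 = 0,
\]
which shows $Mv \in \ker(L - \lambda I)^k$. Thus $\ker(L - \lambda I)^k$ is $M$--invariant for every $k \geq 1$.

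The special case $k = 1$ gives that the eigenspace $E_\lambda = \ker(L - \lambda I)$ is $M$--invariant, which is the ``in particular'' assertion. I would present the general statement first and then note that $k=1$ is the claimed consequence, so essentially no separate argument is needed.

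There is no real obstacle here: the only thing to be careful about is the elementary observation that polynomials in a fixed operator $L$ commute with anything that commutes with $L$, and in particular that $M(L - \lambda I) = (L-\lambda I)M$ follows immediately from $ML = LM$ by distributing $M$ across the scalar term. One could spell this out by expanding $(L-\lambda I)^k$ via the binomial theorem (valid since $L$ and $\lambda I$ commute) and checking termwise that $M$ passes through, but this is routine and need not be belabored. Since this lemma is a preparatory fact used to set up the eigenspace decomposition arguments in the rest of the section, a short, clean proof is all that is warranted.
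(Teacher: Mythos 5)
Your proof is correct and is the standard argument; the paper itself states this lemma without proof, evidently regarding it as routine. Your key step — that $M$ commutes with $(L-\lambda I)^k$ because it commutes with $L$ and with scalars — is exactly what one would write, so there is nothing to add.
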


\medskip

\begin{lemma} \label{lem:eigsp=}~$($\cite{k}$)$
Let $\phi:\mod (S)\to \GL (m,\C)$ be a homomorphism.
Let $a,b,c,d$ be nonseparating simple closed curves on $S$ such that there exists
an orientation--preserving diffeomorphism $f:S\to S$ with $f(c)=a$ and $f(d)=b$.
Suppose that $\lambda$ is an eigenvalue of $L_a=\phi (t_a)$.
Then $E_\lambda^a=E_\lambda^b$ if and only if $E_\lambda^c=E_\lambda^d$.
\end{lemma}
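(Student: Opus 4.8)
The plan is to turn the hypothesis into a single conjugation and then transport eigenspaces. Denoting also by $f$ its isotopy class, which is an element of $\mod (S)$ (in the paper's convention diffeomorphisms fix the boundary and the marked points; more generally, since $a,b,c,d$ are nonseparating one may replace $f$ by a mapping class having the same effect on these four curves via the change of coordinates principle), the equalities $f(c)=a$ and $f(d)=b$ give
\[
t_a=f\,t_c\,f^{-1},\qquad t_b=f\,t_d\,f^{-1}
\]
in $\mod (S)$, by the standard identity $f\,t_\gamma\,f^{-1}=t_{f(\gamma)}$ for the right Dehn twist about a simple closed curve $\gamma$ (it is here that orientation--preservation of $f$ enters, so that a right twist is carried to a right twist). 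Applying the homomorphism $\phi$ and setting $P:=\phi(f)\in\GL (m,\C)$ yields $L_a=PL_cP^{-1}$ and $L_b=PL_dP^{-1}$.

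Next I would transport the $\lambda$--eigenspaces. For an invertible matrix $P$ and a scalar $\mu$ one has $\ker(PMP^{-1}-\mu I)=P\,\ker(M-\mu I)$, so $E^a_\lambda=P\,E^c_\lambda$ and $E^b_\lambda=P\,E^d_\lambda$; in particular $\lambda$ is an eigenvalue of $L_c$ as well. Since $P$ is invertible, $P\,E^c_\lambda=P\,E^d_\lambda$ if and only if $E^c_\lambda=E^d_\lambda$, which is exactly the asserted equivalence $E^a_\lambda=E^b_\lambda\iff E^c_\lambda=E^d_\lambda$.

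The argument is essentially formal; its only real content is the observation that one diffeomorphism $f$ realizing both $c\mapsto a$ and $d\mapsto b$ produces a single matrix $P$ conjugating $L_c$ to $L_a$ and $L_d$ to $L_b$ simultaneously. The one point I would treat with care is the passage from the abstract diffeomorphism $f$ to an element of $\mod (S)$ with the same effect on $c$ and $d$: with the paper's standing convention on diffeomorphisms this is immediate, and otherwise it is a routine application of the change of coordinates principle for nonseparating curves. I would expect this bookkeeping, rather than any substantive difficulty, to be the main obstacle.
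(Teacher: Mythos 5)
Your proof is correct. Note that the paper states this lemma with a citation to~\cite{k} and does not reproduce a proof, so there is no in-paper argument to compare against; the conjugation argument you give --- passing from $f(c)=a$, $f(d)=b$ to $L_a=\phi(f)L_c\phi(f)^{-1}$ and $L_b=\phi(f)L_d\phi(f)^{-1}$ via $h\,t_\gamma\,h^{-1}=t_{h(\gamma)}$ for orientation--preserving $h$, then transporting eigenspaces by the invertible matrix $\phi(f)$ --- is the standard one, and your attention to whether the given diffeomorphism $f$ actually represents an element of $\mod(S)$ (fixing boundary and marked points, or being replaced by one via change of coordinates) is the only point that genuinely requires care.
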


\medskip

\begin{lemma} \label{lem:E-a=E-b}
Let $S$ be a compact connected oriented surface of genus $g\geq 1$ and let $\phi:\mod (S)\to \GL (m,\C)$
be a homomorphism. Suppose that there are two nonseparating simple closed curves $a$ and $b$ on $S$ intersecting
at one point such that $E^a_\lambda=E^b_\lambda$ for some eigenvalue $\lambda$ of $L_a$ and $L_b$.
Then $E^a_\lambda$ is $\mod (S)$--invariant. That is, $\phi (f) (E^a_\lambda)=E^a_\lambda$ for all $f\in \mod (S)$.
\end{lemma}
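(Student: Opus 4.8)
The plan is to show that the subspace $E^a_\lambda$ is invariant under $\phi(t_c)$ for every Dehn twist $t_c$ about a nonseparating simple closed curve $c$; since such twists generate $\mod(S)$ (here we use $g\geq 2$, and for $g=1$ one argues directly on a torus where any two nonseparating curves intersecting once already generate), this gives the claim. So fix a nonseparating curve $c$. First I would reduce, via Theorem~\ref{thm:dualeqv} (the ``chain'' connectivity result of~\cite{km}), to the case where $c$ is obtained from $\{a,b\}$ by a short chain of nonseparating curves each intersecting the next once: it suffices to propagate the equality of eigenspaces one step at a time along such a chain.

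The engine for a single step is Lemma~\ref{lem:eigsp=}. The key observation is that $(a,b)$ is an ordered pair of nonseparating curves intersecting once, and the ``change of coordinates'' principle for surfaces says that any two such ordered pairs are related by an orientation-preserving diffeomorphism of $S$. Concretely, given a curve $c$ intersecting $b$ once (the generic step along the chain), the pairs $(a,b)$ and $(c,b)$ — or $(b,c)$, after possibly reordering — are in the same mapping class orbit, so there is $f\in\mod(S)$ carrying one to the other; Lemma~\ref{lem:eigsp=} then transports the equality $E^a_\lambda=E^b_\lambda$ to an equality $E^c_\lambda=E^b_\lambda$ (or $E^b_\lambda=E^c_\lambda$). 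Iterating along the chain from Theorem~\ref{thm:dualeqv}, I would conclude that $E^c_\lambda=E^{c'}_\lambda$ for consecutive curves, and in particular that $E^c_\lambda=E^a_\lambda$ for the target curve $c$.

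It remains to convert ``$E^c_\lambda=E^a_\lambda$ for all nonseparating $c$'' into ``$\phi(f)(E^a_\lambda)=E^a_\lambda$ for all $f$''. For this, note $\phi(f)$ conjugates $L_c=\phi(t_c)$ to $\phi(ft_cf^{-1})=\phi(t_{f(c)})=L_{f(c)}$, so $\phi(f)$ maps the $\lambda$-eigenspace $E^c_\lambda$ of $L_c$ isomorphically onto the $\lambda$-eigenspace $E^{f(c)}_\lambda$ of $L_{f(c)}$. Since $f(c)$ is again nonseparating, the previous paragraph gives $E^c_\lambda=E^a_\lambda=E^{f(c)}_\lambda$, hence $\phi(f)(E^a_\lambda)=E^a_\lambda$. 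As $f$ was arbitrary, $E^a_\lambda$ is $\mod(S)$-invariant.

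The main obstacle I anticipate is bookkeeping at the ends of the chain and the ordering of pairs: Lemma~\ref{lem:eigsp=} is stated for a diffeomorphism sending an \emph{ordered} pair $(c,d)$ to $(a,b)$, so one must be careful that at each step of the chain the two curves involved, together with an auxiliary curve, really do form a pair in the correct mapping-class orbit — this is where the homogeneity of the space of ordered one-intersecting pairs of nonseparating curves is essential, and it is exactly the hypothesis packaged into Lemma~\ref{lem:eigsp=}. One also needs $\lambda$ to remain an eigenvalue of $L_c$ at each step, but this is automatic since all $L_c$ are conjugate. No serious computation is involved; the argument is a diagram chase through the orbit structure of curves.
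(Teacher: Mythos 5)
Your proof is correct and follows essentially the same route as the paper: use Theorem~\ref{thm:dualeqv} to connect $a$ to an arbitrary nonseparating curve $x$ by a chain of one-intersecting curves, use the change-of-coordinates principle together with Lemma~\ref{lem:eigsp=} to propagate $E^a_\lambda = E^b_\lambda$ along the chain and conclude $E^x_\lambda = E^a_\lambda$ for all nonseparating $x$, then deduce $\mod(S)$-invariance. The one small departure is in the last step: the paper appeals to generation of $\mod(S)$ by nonseparating Dehn twists (each $L_x$ preserves its own eigenspace $E^x_\lambda = E^a_\lambda$), whereas you observe directly that for any $f$, $\phi(f)$ conjugates $L_a$ to $L_{f(a)}$ and hence sends $E^a_\lambda$ to $E^{f(a)}_\lambda = E^a_\lambda$; your version is marginally cleaner since it does not invoke a generating set (which is only available for $g\geq 2$ in this context), but both are valid.
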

\begin{proof}
Let $x$ be a nonseparating simple closed curve on $S$. By Theorem~\ref{thm:dualeqv},
there is a sequence $a=a_0,a_1,a_2,\ldots,a_k=x$ of nonseparating simple closed curves such that
$a_{i-1}$ intersects $a_i$ at one point for all $1\leq i\leq k$. Since, by the classification of surfaces,
there exists a diffeomorphism $f_i$ of $S$ mapping $(a,b)$ to $(a_{i-1},a_i)$, we have
$E_\lambda^{a_{i-1}} = E_\lambda^{a_{i}}$ by Lemma~\ref{lem:eigsp=}. It follows that
$E_\lambda^x = E_\lambda^a$ for all nonseparating simple closed curves $x$.
Since $\mod (S)$ is generated by nonseparating Dehn twists, we conclude that the subspace $E_\lambda^a $ is
$\mod(S)$--invariant.
\end{proof}

\medskip

\begin{lemma} \label{lem:ev=1for2g-1}
Let $g\geq 3$, let $\phi:\mod(S) \to \GL(m,\C)$ be a homomorphism and let
$a$ be a nonseparating simple closed curve on $S$. Suppose that $L_a$ has only one eigenvalue
$\lambda$ with $\dim (E^{a}_\lambda)=m-1$. Then $\lambda=1$.
\end{lemma}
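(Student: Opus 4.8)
The plan is to exploit the determinant of the relevant Dehn twist matrices together with the perfectness results for the mapping class group. First I would observe that, since $a$ is nonseparating, all Dehn twists about nonseparating simple closed curves are conjugate in $\mod(S)$, hence all the matrices $L_x = \phi(t_x)$ for $x$ nonseparating have the same characteristic polynomial; in particular they all have the single ``large'' eigenvalue $\lambda$ of multiplicity $m-1$ and some remaining eigenvalue $\mu$ with $\mu_\# = 1$, and the same determinant $\det L_x = \lambda^{m-1}\mu$.

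Next I would pin down $\det L_a$. For $g\geq 3$ the group $\mod(S)$ is perfect by Theorem~\ref{thm:H-1} (the abelianization is trivial), so the composition $\det\circ\,\phi : \mod(S)\to\C^\times$ into the abelian group $\C^\times$ must be trivial; hence $\det L_a = 1$, i.e. $\lambda^{m-1}\mu = 1$. The same reasoning, or simply the structure of $L_a$, will also be used to control $\lambda$ itself: I would consider the braid relation $t_a t_b t_a = t_b t_a t_b$ for a curve $b$ meeting $a$ once, which forces $L_a$ and $L_b$ to be conjugate and to satisfy $L_a L_b L_a = L_b L_a L_b$, and then take determinants or traces to extract numerical constraints relating $\lambda$ and $\mu$. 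Combined with $\lambda^{m-1}\mu=1$ one should be forced into $\lambda$ being a root of unity, and then a closer look (for instance, decomposing $\C^m$ into the eigenspace $E_\lambda^a$ of dimension $m-1$ and using that a large common eigenspace for conjugate matrices satisfying a braid relation must be the $\lambda=1$ eigenspace, via a computation like the one in Lemma~\ref{lem:X=U}) should eliminate all possibilities except $\lambda = 1$.

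The cleanest route, which I would actually try to push through, is this: restrict attention to the action on the quotient or on a complement and reduce to a $2$- or low-dimensional situation where Lemma~\ref{lem:X=U}-type rigidity applies; alternatively, use that the genus-$g$ surface with $g\geq 3$ contains many commuting and braiding Dehn twists (the $t_{a_i}, t_{b_i}$ configuration), so that on the $(m-1)$-dimensional common eigenspace $E_\lambda^a$ — which by Lemma~\ref{lem:E-a=E-b} is $\mod(S)$-invariant once one checks $E_\lambda^a = E_\lambda^b$ (forced here because $\dim E_\lambda^a = m-1$ and $\dim E_\lambda^b = m-1$ inside $\C^m$, so they must intersect nontrivially and in fact coincide) — the group acts, and on this invariant subspace the eigenvalue of each $\phi(t_x)$ is $\lambda$; applying Theorem~\ref{thm:2g-1} or the perfectness of $\mod(S)$ to the induced representation $\mod(S)\to\GL(E_\lambda^a)$ shows its image is trivial, so $\phi(t_a)$ acts as $\lambda\cdot\mathrm{id}$ on an $(m-1)$-dimensional invariant subspace with $\mod(S)$ acting trivially there, and then the determinant-one condition plus the fact that $t_a$ is a commutator (so $\det L_a = 1$) and that $\lambda$ must equal the value of $t_a$ in any one-dimensional representation forces $\lambda = 1$.

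The main obstacle I anticipate is the step asserting $E_\lambda^a = E_\lambda^b$: from $\dim E_\lambda^a = \dim E_\lambda^b = m-1$ in $\C^m$ one only gets that the intersection has dimension at least $m-2$, not that the two spaces are equal. To close this gap I would argue by contradiction — if $E_\lambda^a \neq E_\lambda^b$, then $E_\lambda^a + E_\lambda^b = \C^m$ and $W = E_\lambda^a \cap E_\lambda^b$ has dimension exactly $m-2$; $W$ is invariant under both $L_a$ and $L_b$, hence under the subgroup they generate, and on $\C^m/W$ (dimension $2$) the matrices $\bar L_a, \bar L_b$ satisfy the braid relation, each with a single eigenvalue $\lambda$ twice or with eigenvalues $\lambda, \mu$ — analyzing this $2\times 2$ braid-relation situation explicitly (much as in Lemma~\ref{lem:X=U}) should yield $\lambda = 1$ directly, or a contradiction, thereby handling the remaining case without even needing the invariance lemma. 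Either branch lands on $\lambda = 1$, completing the proof.
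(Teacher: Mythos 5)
Your proposal misses the one tool the paper's proof actually relies on, the \emph{lantern relation}, and none of the routes you sketch can reach the conclusion because the only relation you invoke, the braid relation, is blind to scaling. Concretely, for every $\lambda\in\C^\times$ the matrices $\lambda U$ and $\lambda\widehat U$ satisfy the braid relation: one checks directly that $U\widehat U U=\widehat U U\widehat U$, and then both sides of $(\lambda U)(\lambda\widehat U)(\lambda U)=(\lambda\widehat U)(\lambda U)(\lambda\widehat U)$ pick up the same factor $\lambda^3$. Moreover $\lambda U$ and $\lambda\widehat U$ each have unique eigenvalue $\lambda$ with a one-dimensional eigenspace. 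So the $2\times 2$ quotient analysis in your final paragraph (even granting that $W=E^a_\lambda\cap E^b_\lambda$ is invariant, which is in fact true here since both twists act as the scalar $\lambda$ on $W$) cannot single out $\lambda=1$: whatever you extract from the braid relation on $\C^m/W$ will hold for \emph{every} $\lambda$, because a balanced relation -- equal numbers of factors on both sides -- can never see a common rescaling.

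There are two further problems. You misread the hypothesis: $L_a$ has a \emph{unique} eigenvalue $\lambda$, whose eigenspace $E^a_\lambda$ happens to have dimension $m-1$; there is no second eigenvalue $\mu$. Hence the perfectness of $\mod(S)$ gives $\det L_a=\lambda^m=1$, which only says $\lambda$ is a root of unity, nowhere near $\lambda=1$. And your alternative route is only half an argument: when $E^a_\lambda=E^b_\lambda$ the subspace $E^a_\lambda$ is $\mod(S)$-invariant by Lemma~\ref{lem:E-a=E-b}, and composing the restricted representation with $\det$ gives $\lambda^{m-1}=1$, which together with $\lambda^m=1$ does force $\lambda=1$ -- this part would work. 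But you correctly note that $E^a_\lambda=E^b_\lambda$ is not forced, and the complementary case $E^a_\lambda\neq E^b_\lambda$ genuinely occurs (it is even the hypothesis of Lemma~\ref{lem:L=AL=B}); there your proposal has nothing left to say.

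The paper's argument avoids all of this by using an \emph{unbalanced} relation, one with four twist factors on the left and three on the right, so that a global scalar becomes visible. Choose nonseparating curves $c_1,\dots,c_6$ disjoint from $a$ with the lantern relation $t_a t_{c_1}t_{c_2}t_{c_3}=t_{c_4}t_{c_5}t_{c_6}$. Each $L_{c_i}$ commutes with $L_a$ and therefore preserves $E^a_\lambda$; in a basis whose last $m-1$ vectors span $E^a_\lambda$, each of $L_a, L_{c_1},\dots,L_{c_6}$ is block lower-triangular with top-left scalar equal to $\lambda$ (its only eigenvalue, since $t_{c_i}$ is conjugate to $t_a$). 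Reading off the $(1,1)$-entry of the lantern relation gives $\lambda^4=\lambda^3$, hence $\lambda=1$, with no case split at all. That single idea -- replace the scale-invariant braid relation by the scale-detecting lantern relation -- is what your proposal is missing.
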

\begin{proof}
Choose six nonseparating simple closed curves $c_1,c_2,c_3,c_4,c_5,c_6$ on $S$ disjoint from $a$ such that
we have the following lantern relation:
\[ t_at_{c_1}t_{c_2}t_{c_3}=t_{c_4}t_{c_5}t_{c_6}.
\]

Let $\alpha=\{v_1,v_2,\ldots, v_m\}$ be a basis of $\C^m$ with $v_j\in E^a_\lambda$ for $j\geq 2$,
so that \[ L_a= \left(
  \begin{array}{cc}
     \lambda & 0 \\
     \star  & \lambda I_{m-1} \\
  \end{array}
\right)
\]
with respect to $\alpha$.
Since each Dehn twist $t_{c_i}$ is conjugate to $t_a$, $L_{c_i}=\phi (t_{c_i})$ is conjugate to $L_a$
and, hence, it has unique eigenvalue $\lambda$.
Since $L_{c_i}$ commutes with $L_{a}$, it preserves the eigenspace $E^a_\lambda$; $L_{c_i}(E^a_\lambda)=E^a_\lambda$.
Thus, the matrix of $L_{c_i}$ with respect to the basis $\alpha$ is
\[ L_{c_i}=  \left(
  \begin{array}{cc}
     \lambda & 0 \\
     \star  & C_i \\
  \end{array}
\right),
\]
where $C_i$ is a square matrix of size $m-1$. Now the lantern relation
\[ L_aL_{c_1}L_{c_2}L_{c_3}=L_{c_4}L_{c_5}L_{c_6}
\]
implies that $\lambda^4=\lambda^3$, giving $\lambda=1$.
\end{proof}

\bigskip

\begin{lemma} \label{lem:ev=1}
Let $g\geq 3$, let $\phi :\mod (S)\to \GL (m,\C)$ be a homomorphism and let $a$ be
a nonseparating simple closed curve on $S$. If $\mu$ is an eigenvalue of $L_a=\phi (t_a)$ with
$\mu_\#  \leq 2g-3$ then $\mu=1$ and the dimension of the eigenspace $E^a_\mu$ is $\mu_\#$.
\end{lemma}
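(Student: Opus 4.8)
The plan is to exploit the rich supply of lantern relations among Dehn twists about curves disjoint from $a$ together with the commutativity and conjugacy relations already developed. The starting point is that $L_a$ and all the $L_c$ for $c$ disjoint from $a$ commute, and all such $L_c$ are conjugate to $L_a$, hence have the same characteristic polynomial; in particular each eigenvalue $\mu$ of $L_a$ with multiplicity $\mu_\#$ gives rise, on the generalized eigenspace $V_\mu=\ker(L_a-\mu I)^m$, to a representation of the subgroup generated by these disjoint twists into $\GL(\mu_\#,\C)$. When $g\geq 4$, a genus-$g$ surface cut along $a$ still contains a subsurface of genus $\geq 3$ (or enough marked points/boundary) disjoint from $a$; restricting $\phi$ to the mapping class group of that subsurface and projecting to $V_\mu$ yields a homomorphism into $\GL(\mu_\#,\C)$ with $\mu_\#\leq 2g-3$. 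Since the subsurface has genus $\geq 2$, I can invoke Theorem~\ref{thm:2g-1} provided $\mu_\#\leq 2g'-1$ for its genus $g'$; the bound $\mu_\#\leq 2g-3$ is designed precisely so that this works. That theorem forces the restricted representation to factor through $H_1$, which for $g'\geq 3$ is trivial, so all the $L_c$ act trivially on $V_\mu$; then the lantern relation $t_at_{c_1}t_{c_2}t_{c_3}=t_{c_4}t_{c_5}t_{c_6}$ restricted to $V_\mu$ collapses to $L_a|_{V_\mu}=I$, giving $\mu=1$ and $V_\mu=E^a_\mu$, i.e. the eigenspace has full dimension $\mu_\#$.

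More carefully, I would first fix $a=a_1$ (say) and choose the complementary subsurface $S'$ of $\bar S$ (or of $S$) disjoint from $a$ of the largest possible genus; realize the six lantern curves $c_1,\dots,c_6$ inside $S'$ as in Lemma~\ref{lem:ev=1for2g-1}. Let $\rho:\mod(S')\to\GL(\mu_\#,\C)$ be the composition of the inclusion-induced map $\mod(S')\to\mod(S)$, $\phi$, restriction to the $L_a$-invariant subspace $V_\mu$ (invariance by the lemma on commuting operators), and the resulting action on $V_\mu$. For $g\geq 4$ one has genus $g'\geq 3$, so $2g'-1\geq 2g-3\geq\mu_\#$ and Theorem~\ref{thm:2g-1} gives $\rho$ trivial; in particular $L_{c_i}|_{V_\mu}=I$ for all $i$. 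Then apply the lantern relation inside $\GL(V_\mu)$ to conclude $L_a|_{V_\mu}=I$, so $(L_a-I)|_{V_\mu}=0$, which means $\mu=1$ and $E^a_1\supseteq V_\mu$ has dimension $\geq\mu_\#$; since always $\dim E^a_\mu\leq\mu_\#$, equality holds.

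For $g=3$ the complementary genus drops to $g'=2$, where Theorem~\ref{thm:2g-1} only gives that $\rho$ has image a quotient of $\Z_{10}$; this is the step I expect to be the main obstacle. To handle it I would either (a) use boundary components or marked points of $S$ to enlarge the relevant subsurface beyond a closed genus-$2$ surface, or (b) argue directly: an image that is a quotient of $\Z_{10}$ means each $L_{c_i}|_{V_\mu}$ is a scalar $\zeta_i$ (being the image of a Dehn twist, all $\zeta_i$ are equal to a single root of unity $\zeta$ of order dividing $10$), and then the lantern relation forces $\mu\cdot\zeta^3=\zeta^3$, hence $\mu=1$; one must still separately rule out nontrivial Jordan blocks, which follows because $L_{c_i}$ commutes with $L_a$ and is scalar on $V_\mu$, forcing $L_a|_{V_\mu}$ itself (via a second lantern or the braid relation with a curve meeting $a$) to be semisimple. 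A cleaner route, and the one I would adopt if available, is simply to cite Lemma~\ref{lem:ev=1for2g-1} in the case $\mu_\#=m-1$ and reduce the general case to it by passing to $V_\mu$; in any event the combinatorial heart is the lantern identity turning a multiplicative relation among conjugate unipotent-type matrices into the equation $\mu=1$.
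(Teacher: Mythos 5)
Your proposal follows the same strategy as the paper: decompose $\C^m$ into the generalized $\mu$-eigenspace $V_\mu$ of $L_a$ and its $L_a$-invariant complement, let $R$ be the complement of a regular neighborhood of $a$, observe that $\mod(R)$ preserves $V_\mu$, apply Theorem~\ref{thm:2g-1} to the resulting representation $\rho:\mod(R)\to\GL(\mu_\#,\C)$, and then use a lantern relation whose four boundary curves include a curve $a'$ parallel to $a$ and six curves nonseparating on $R$. For $g\geq 4$ your argument is fine.

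Where you go astray is the $g=3$ case, and it is worth pinpointing the error because the actual fix is simpler than either of your proposed detours. When $R$ has genus $2$, Theorem~\ref{thm:2g-1} gives you that the image of $\rho$ is abelian (a quotient of $\Z_{10}$), not trivial. You then assert that each $\rho(t_{c_i})$ must be a scalar; this does not follow. A cyclic group of matrices of order dividing $10$ consists of semisimple matrices, but they need not be scalar, so the eigenvalue identity $\mu\cdot\zeta^3=\zeta^3$ you want is not justified, and the separate ``Jordan block'' worry you raise is a symptom of having dropped from the matrix level to the eigenvalue level too early. Your fallback of citing Lemma~\ref{lem:ev=1for2g-1} also does not apply: that lemma assumes $L_a$ has a single eigenvalue with eigenspace of dimension $m-1$, a hypothesis you do not have here.

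The repair is to stay at the level of matrices and use only that $\rho$ has abelian image. By Lemma~\ref{lem:abelianrep}, all Dehn twists about nonseparating curves on $R$ have the same image $M$ under $\rho$. Since $b,c,d,x,y,z$ are all nonseparating on $R$, the lantern $t_{a'}t_bt_ct_d=t_xt_yt_z$ yields $\rho(t_{a'})M^3=M^3$, hence $\rho(t_{a'})=I$. But $\rho(t_{a'})=L_a|_{V_\mu}$ because $a'$ is isotopic to $a$ in $S$, so $L_a|_{V_\mu}=I$. This gives both $\mu=1$ and $E^a_\mu=V_\mu$ at once, with no separate argument needed for diagonalizability. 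This is essentially what the paper does, phrased slightly differently: the paper rewrites the lantern as $t_{a'}=(t_xt_b^{-1})(t_yt_c^{-1})(t_zt_d^{-1})$ to exhibit $t_{a'}$ as a product of elements of $[\mod(R),\mod(R)]$, then uses perfectness of the quotient; your version observes the same thing by cancelling $M^3$. Either way, the argument works uniformly for all $g\geq 3$, and the special handling you proposed for $g=3$ (enlarging the subsurface, scalarity, an extra braid relation) is unnecessary.
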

\begin{proof}
If $m\leq 2g-1$ then the image of $\phi$ is trivial by Theorem~\ref{thm:2g-1}. In particular,
all eigenvalues of $L_a$ are equal to $1$. Now assume that $m \geq  2g$.

Let $\mu=\mu_0,\mu_1,\mu_2,\ldots,\mu_s$ be all distinct eigenvalues of $L_a$.
Set
\[ K=\ker (L_a-\mu_0 I)^m,\mbox{ and }K'=\bigoplus_{i=1}^s \ker(L_a-\mu_i I)^m\]
so that $\dim (K) =\mu_\#$, and that $\C^m = K\oplus K'$. Let $\alpha$ be a basis $K$ and $\alpha'$ be a basis of $K'$
such that with respect to the basis $\alpha\cup\alpha'$ of $\C^m$, the matrix of $L_a$
\[ L_a= \left(
  \begin{array}{cc}
     A & 0 \\
    0  & A' \\
  \end{array}
\right),
\]
is in Jordan form, where $A$ is a square matrix of size $\mu_\#$.

Let $R$ be the complement of a regular neighborhood of $a$. Extending self--diffeomorphisms
of $R$ to $S$ by the identity gives a homomorphism $q:\mod (R)\to \mod(S)$. Let $\psi=\phi\, q$
and let $a'$ be a simple closed curve on $R$ which is isotopic to $a$ on $S$.
If $f\in \mod (R)$ is any element, $\psi (f)$
commutes with $L_a$ so that it preserves the subspaces $K$ and $K'$. Hence, the matrix of $\psi (f) $ is
\[ \psi (f)= \left(
  \begin{array}{cc}
     F & 0 \\
    0  & F' \\
  \end{array}
\right),
\]
where $F$ is a square matrix of size $\mu_\#$.

Now, the correspondence $f\mapsto F$ defines a homomorphism $\bar \psi: \mod (R) \to \GL (K)=\GL (\mu_\#,\C)$. Since
$\mu_\# \leq 2g-3=2(g-1)-1$ and the genus of $R$ is $g-1\geq 2$,
the image of the map $\bar \psi$ is cyclic by Theorem~\ref{thm:2g-1}.
It is easy to find six simple closed curves $b,c,d,x,y,z$ which are nonseparating on $R$
such that there is the lantern relation $t_{a'}t_bt_ct_d=t_xt_yt_z$, and that
each of $t_xt_b^{-1}$, $t_yt_c^{-1}$ and $t_zt_d^{-1}$ is a commutator in $\mod(R)$.
Since $t_{a'}=t_xt_b^{-1}t_yt_c^{-1}t_zt_d^{-1}$, it follows that $t_{a'}$
is contained in the commutator subgroup of $\mod (R)$. In particular, $\bar \psi (t_{a'})=I$. As a result of this we have
\[ L_a=L_{a'}= \left( \begin{array}{cc}
     \bar \psi (t_{a'}) & 0 \\
    0  & A' \\  \end{array} \right)
    = \left( \begin{array}{cc}
     I & 0 \\
    0  & A' \\  \end{array} \right).\]
Hence, $\mu=1$ and $\dim (E^a_\mu)=\mu_\#$.
\end{proof}

\bigskip

\begin{corollary} \label{cor:2ev}
Let $g,\phi$ and $a$ be as in Lemma~\ref{lem:ev=1}. If $m\leq 4g-5$ then $L_a$
has at most two eigenvalues.
\end{corollary}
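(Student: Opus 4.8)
The plan is to argue by contradiction using a counting argument on the multiplicities of the eigenvalues of $L_a$. Suppose $L_a$ has at least three distinct eigenvalues $\mu_0,\mu_1,\mu_2,\dots$. By Lemma~\ref{lem:ev=1}, any eigenvalue $\mu$ with $\mu_\# \leq 2g-3$ must equal $1$; since $L_a$ can have at most one eigenvalue equal to $1$, this forces all but one of the distinct eigenvalues to satisfy $\mu_\# \geq 2g-2$. First I would observe that if there are three distinct eigenvalues, then at least two of them—say $\mu_1$ and $\mu_2$—are different from $1$, hence $(\mu_1)_\# \geq 2g-2$ and $(\mu_2)_\# \geq 2g-2$. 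Summing multiplicities over the characteristic polynomial of the $m\times m$ matrix $L_a$ gives
\[
m \;=\; \sum_\mu \mu_\# \;\geq\; (\mu_1)_\# + (\mu_2)_\# + (\mu_0)_\# \;\geq\; (2g-2) + (2g-2) + 1 \;=\; 4g-3,
\]
where the final $+1$ accounts for the third eigenvalue $\mu_0$ contributing at least one to the multiplicity count.

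This already contradicts the hypothesis $m \leq 4g-5 < 4g-3$, which completes the proof. The only point requiring a little care is the case analysis on whether $1$ is among the eigenvalues of $L_a$: if $1$ is not an eigenvalue at all, then every eigenvalue has multiplicity at least $2g-2$ by Lemma~\ref{lem:ev=1}, and with three distinct eigenvalues we get $m \geq 3(2g-2) = 6g-6 \geq 4g-3$ for $g\geq 3$, again a contradiction; if $1$ is an eigenvalue, it is the unique one that may have small multiplicity, and the displayed inequality applies. Either way $m \geq 4g-3$, contradicting $m \leq 4g-5$.

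I do not expect any genuine obstacle here: the corollary is a direct numerical consequence of Lemma~\ref{lem:ev=1}, and the whole content is packaging the multiplicity bound $2g-2$ for non-trivial eigenvalues into a pigeonhole count. The mildly delicate bookkeeping is simply making sure the "$+1$" for the third eigenvalue is not absorbed incorrectly and that the argument is uniform in whether or not $1$ occurs as an eigenvalue; both are handled by the short case split above.
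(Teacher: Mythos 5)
Your proof is correct and is exactly the intended argument: the paper states this corollary without proof because it is a direct pigeonhole consequence of Lemma~\ref{lem:ev=1}, which forces every eigenvalue $\neq 1$ of $L_a$ to have multiplicity at least $2g-2$. Your case split (whether or not $1$ occurs among the eigenvalues) and the resulting bound $m \geq 4g-3 > 4g-5$ are precisely what is needed.
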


\bigskip
\subsection{The main lemma}
The main step in the proof of Theorem~\ref{thm:main1} is the next lemma, which is also used in the proof of
Theorem~\ref{thm:main2}.

\begin{lemma} \label{lem:L=AL=B}
 Let $g\geq 1$, $m\geq 2g$ and let $\phi:\mod (S)\to \GL (m,\C)$ be a homomorphism.
 Let $a$ be a nonseparating simple closed curve on $S$. Suppose that the Jordan form of $L_a$ is
 \begin{equation}\label{eqn:Jordan}
     \left(
    \begin{array}{c|c}
        \begin{array}{cc}
          1 & 1 \\
          0 & 1 \\
        \end{array}
       & 0 \\ \hline
      0 & I_{m-2} \\
    \end{array}
  \right).
 \end{equation}
Suppose also that there exists a nonseparating simple closed curve $b$ intersecting $a$ at one point
 such that $E^a_1\neq E^b_1$. Then there is a basis of $\C^m$ with respect to which
 \[ L_{a_i}=\left(
  \begin{array}{cc}
    A_i & 0 \\
    0 & I \\
  \end{array}
\right) \mbox{ and } L_{b_i}=\left(
  \begin{array}{cc}
    B_i & 0 \\
    0 & I \\
  \end{array}
\right),\] where $I$ is the identity matrix of size $m-2g$. (See Definition~\ref{def:A-iB-i} for $A_i$ and $B_i$, and Figure~\ref{abcurves}
for the curves $a_i$ and $b_i$.)
 \end{lemma}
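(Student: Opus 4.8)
The plan is to work inside a regular neighborhood of $a$ and its complement, bootstrapping from the rank‑one unipotent structure of $L_a$ to the full configuration of the $L_{a_i},L_{b_i}$. First I would fix a basis adapted to $L_a$: since the Jordan form of $L_a$ is~\eqref{eqn:Jordan}, the generalized $1$‑eigenspace is all of $\C^m$, the geometric eigenspace $E_1^a$ has dimension $m-1$, and $N_a:=L_a-I$ has rank $1$. Because $t_a$ and $t_b$ are conjugate (both twists about nonseparating curves), $L_b$ also has this Jordan form, so $N_b:=L_b-I$ has rank $1$ and $E_1^b$ has dimension $m-1$. The hypothesis $E_1^a\neq E_1^b$ then forces $E_1^a+E_1^b=\C^m$, hence $E_1^a\cap E_1^b$ has dimension $m-2$; call this subspace $W$. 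The braid relation $L_aL_bL_a=L_bL_aL_b$ restricted to the $2$‑dimensional quotient (or to a complementary plane to $W$, which one checks is $L_a$‑ and $L_b$‑invariant once we know $L_a,L_b$ act trivially on $W$) should reproduce exactly the $U,\widehat U$ relation of Lemma~\ref{lem:X=U}. So the first real task is to show $L_a$ and $L_b$ both act trivially on $W$, i.e.\ that $W$ lies in the common honest $1$‑eigenspace; this is where I expect to lean on Lemma~\ref{lem:E-a=E-b}: if $E_1^a$ were $\mod(S)$‑invariant we would get a contradiction with $E_1^a\neq E_1^b$, so instead one uses that $a$ and $b$ can be completed to the standard chain, and the twists about the other curves of the chain commute with $t_a$, forcing their images to preserve $E_1^a$ and cut things down to the $W$ we want.

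Next I would bring in the complement. Let $R$ be $S$ cut along $a$; it has genus $g-1$ and two extra boundary circles, and $q:\mod(R)\to\mod(S)$ has image commuting with $L_a=L_{a'}$ for $a'\subset R$ isotopic to $a$. As in the proof of Lemma~\ref{lem:ev=1}, anything in the image of $q$ preserves $E_1^a$ and the rank‑one flag of $N_a$, so relative to a basis $\{e_1\}\cup(\text{basis of }E_1^a)$ every $\phi(q(f))$ is block lower‑triangular with a $1\times 1$ block acting on the line $\C e_1$; the lantern‑relation argument there shows $t_{a'}$, and more generally any twist about a nonseparating curve of $R$, lands in the commutator subgroup of $\mod(R)$, hence acts as $I$ on the $1\times 1$ block and as $I$ on the quotient $\C^m/E_1^a$. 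I can take $a_2,b_2,\dots,a_g,b_g$ together with a curve $a_1'$ standardly positioned so that $a_1'$ is isotopic to $a$ in $S$, all of $a_2,b_2,\dots$ lie in $R$, and $b_1$ meets $a\,(=a_1')$ once. Set $a_1:=a$, $b_1:=b$ after checking (via Lemma~\ref{lem:eigsp=} and change of coordinates by a diffeomorphism taking the standard pair to $(a,b)$) that we may assume $(a,b)$ is the standard pair $(a_1,b_1)$ of Figure~\ref{abcurves}.

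Now choose a basis: $v_1,v_2$ spanning a plane $P$ with $L_a|_P=U$, and a basis of $W=E_1^a\cap E_1^b$ extended by further vectors to handle the generalized part — but since $L_a$ is semisimple away from $P$, in fact $\C^m=P\oplus W'$ with $W'$ an $(m-2)$‑dimensional subspace on which all the $L_{a_i},L_{b_i}$ for $i\ge 2$ act, and on which $L_a,L_b$ act trivially. Restricting $\phi\circ q$ to the standard generators $t_{a_i},t_{b_i}$ ($i\ge2$) gives, after the commutator‑subgroup reduction, a homomorphism into the matrices of the form $\mathrm{Diag}(1_P,*)$; one then reruns the present lemma, or rather invokes Lemma~\ref{lem:X=A-1} and Remark~\ref{rem:X=A-i} together with Lemma~\ref{lem:nXYZ}, to pin $L_{a_i},L_{b_i}$ to $\mathrm{Diag}(I_2,\dots,A_i,\dots)$ on the first $2g$ coordinates and $I$ thereafter. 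The hardest step, I expect, is the simultaneous normalization: showing that a \emph{single} basis works for all $2g$ matrices at once — that the plane $P$ for $a_1$, the subspaces forced by the commuting relations with $a_2,b_2,\dots$, and the braid relation between $a_1$ and $b_1$ are mutually compatible. I would handle this inductively on $g$ exactly as Lemma~\ref{lem:nXYZ} is proved: peel off the $a_1,b_1$ block using the braid relation and Lemma~\ref{lem:X=U}, observe the remaining generators act on the complementary $(m-2)$‑dimensional space as the standard generators of $\mod$ of a genus‑$(g-1)$ surface with the analogous eigenspace hypothesis inherited, and recurse, the base case $g=1$ being precisely Lemma~\ref{lem:X=U} applied to $U=L_a|_P$ and $\widehat U=L_b|_P$ on the one nontrivial $2\times2$ block with everything else the identity.
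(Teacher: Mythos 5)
The high-level shape you propose --- use $\dim(E_1^a\cap E_1^b)=m-2$, the braid relation, and an induction that peels off $2\times 2$ blocks one pair of curves at a time --- is indeed the skeleton of the paper's argument. But the core technical point of the lemma is precisely the step you wave past, and a couple of your intermediate claims are false as stated.

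You identify as ``the first real task'' showing that $L_a$ and $L_b$ act trivially on $W=E_1^a\cap E_1^b$; this is immediate from $W\subseteq E_1^a$ and $W\subseteq E_1^b$, so there is nothing to prove there, and the detour through Lemma~\ref{lem:E-a=E-b} and completing $(a,b)$ to a chain is pointing at the wrong issue. The genuine issue is the converse direction: you assert that a complementary plane $P$ to $W$ ``one checks is $L_a$- and $L_b$-invariant once we know $L_a,L_b$ act trivially on $W$,'' and later that ``in fact $\C^m=P\oplus W'$'' with $L_a,L_b$ trivial on $W'$. Neither of these follows from triviality on $W$ alone. If you take the obvious basis $v_1\in E_1^a\setminus E_1^b$, $v_2\in E_1^b\setminus E_1^a$, $v_3,\dots,v_m\in W$, the matrix of $L_a$ has a nonzero column below the leading $2\times 2$ block (the range of $L_a-I$ need not lie in $\mathrm{span}\{v_1,v_2\}$), so $\mathrm{span}\{v_1,v_2\}$ is \emph{not} $L_a$-invariant. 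One has to replace $v_1$ by a specific combination $w_1=x_1v_1+\sum_{j\ge3}x_jv_j$, check via the braid relation that $x_1\neq 0$ (this is where the braid relation actually enters), normalize, then repeat for $v_2\mapsto w_2$ using the braid relation again to pin $y_2=-1$. That basis-correction is the substantive content, and it is not a quotient computation: working on $\C^m/W$ as you suggest gives the shape of the $2\times2$ block but not the vanishing of the off-block entries, which is exactly what you need to split $L_a$ and $L_b$ simultaneously.

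Two further points. First, the excursion through $\mod(R)$, the lantern relation, and the commutator subgroup is not needed here; the paper's proof of this lemma uses only commuting Dehn twists (disjoint curves), the braid relation, and the elementary matrix lemmas (Lemmas~\ref{lem:2XYZ}, \ref{lem:nXYZ}, \ref{lem:X=U}), plus the observation that all the $L_{a_i},L_{b_i}$ share the Jordan type~\eqref{eqn:Jordan} by conjugacy and satisfy $E_1^{a_i}\neq E_1^{b_i}$ by Lemma~\ref{lem:eigsp=}. Second, your proposed base case ``$g=1$ is precisely Lemma~\ref{lem:X=U}'' is off: Lemma~\ref{lem:X=U} presupposes the matrix is already reduced to the upper-triangular form $\left(\begin{smallmatrix}a&b\\0&a\end{smallmatrix}\right)$ commuting with $U$; reaching that form is itself the base-case work. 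What actually recurses in the paper is the index $k$ of the pair $(a_{k+1},b_{k+1})$: having arranged $L_{a_i}=\widetilde A_i$, $L_{b_i}=\widetilde B_i$ for $i\le k$, one uses that $t_{a_{k+1}},t_{b_{k+1}}$ commute with $t_{a_i},t_{b_i}$ ($i\le k$) together with Lemma~\ref{lem:nXYZ} to force $L_{a_{k+1}},L_{b_{k+1}}$ to be the identity on the first $2k$ coordinates, checks $W_k\cap E^{a_{k+1}}\neq W_k\cap E^{b_{k+1}}$ (else $\dim E^{a_{k+1}}=m$ or $E^{a_{k+1}}=E^{b_{k+1}}$, both impossible), and then reruns the two-step basis correction on $W_k$.

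So: right overall architecture, but a genuine gap at the central step (the jointly invariant complement is produced by two braid-relation--driven changes of basis, not observed), plus an unnecessary and unused auxiliary argument via $\mod(R)$, and a misidentified base case.
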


\begin{proof}
For $i=1,2,\ldots,g$, we set
  \begin{center} $L_{2i-1}=L_{a_i}$, $L_{2i}=L_{b_i}$,
  $E^{2i-1}=E^{a_i}_1$ and $E^{2i}=E^{b_i}_1$. \end{center}
We also set \[ \widetilde A_i=\left(
  \begin{array}{cc}
    A_i & 0 \\
    0 & I \\
  \end{array}
\right) \mbox{ and }\widetilde B_i=\left(
  \begin{array}{cc}
    B_i & 0 \\
    0 & I \\
  \end{array}
\right).\]
Since $E^a_1\neq E^b_1$ and since there exists a homeomorphism mapping $(a,b)$ to $(a_i,b_i)$,
 by Lemma~\ref{lem:eigsp=} we have $E^{2i-1}\neq E^{2i} $ for all $1\leq i\leq g$.
Note that $\dim (E^j)=m-1$ for all $1\leq j\leq 2g$.

Since $E^1\neq E^2$,  the dimension of $W_1=E^{1}\cap E^{2}$ is $m-2$. Let $\{v_3,v_4,\ldots,v_m\}$ be a
basis of $W_1$. Choose $v_1\in E^1\setminus E^2$ and $v_2\in E^2\setminus E^1$, so that
 $\beta_0=\{v_1,v_2,\ldots,v_m\}$ is an ordered basis of $\C^m$.
 With respect to $\beta_0$, we have
\[L_1=
\left(
  \begin{array}{c|c}
       \begin{array}{cc}
        1 & x_1 \\
        0 & 1 \\
       \end{array}
     & 0 \\ \hline
      \begin{array}{cc}
        0 & X  \\
      \end{array}
     & I_{m-2} \\
  \end{array}
\right), \ \
L_2=
\left(
  \begin{array}{c|c}
       \begin{array}{cc}
        1 & 0 \\
        y'_2 & 1 \\
       \end{array}
     & 0 \\ \hline
      \begin{array}{cc}
        Y' & 0 \\
      \end{array}
       & I_{m-2} \\
  \end{array}
\right),\]
where $X=\left(  \begin{array}{cccc}  x_3 & x_4 & \cdots & x_m \\  \end{array} \right)^t$.

If $x_1=0$ then we conclude from the braid relation
\begin{equation}\label{eqn:br121=212}
 L_1 L_2 L_1 = L_2 L_1 L_2
\end{equation}
that $y'_2=0$. But then $L_1$ and $L_2$ commute.
Again from the relation~(\ref{eqn:br121=212}) we get $L_1=L_2$, contradicting to $E^1\neq E^2$.
Hence, $x_1$ is nonzero.

Let $w_1=x_1 v_1 + x_3 v_3 + x_4 v_4 + \cdots + x_mv_m$. Let $\beta'_0$ be the basis obtained from
$\beta_0$ by replacing $v_1$ with $w_1$. With respect to this new basis we have
\[
L_1=
\left(
  \begin{array}{cc}
      U & 0 \\
      0 & I_{m-2} \\
  \end{array}
\right)=\widetilde A_1, \ \
L_2=
\left(
  \begin{array}{c|c}
       \begin{array}{cc}
        1 & 0 \\
        y_2 & 1 \\
       \end{array}
     & 0 \\ \hline
      \begin{array}{cc}
        Y & 0
      \end{array}
       & I_{m-2} \\
  \end{array}
\right),\]
where $Y=\left(  \begin{array}{cccc}  y_3 & y_4 & \cdots & y_m \\  \end{array} \right)^t$.

From the braid relation~(\ref{eqn:br121=212}), it is easy to conclude that $y_2=-1$. Now let
$w_2= v_2 - (y_3v_3 +y_4v_4+\cdots +y_m v_m)$ and let
$\beta_1$ be the basis $\beta'_0$ where $v_2$ is replaced with $w_2$. With respect to the basis
$\beta_1=\{w_1,w_2,v_3,v_4,\ldots, v_{n-1},v_m\}$ we now have
$L_1=\widetilde A_1$ and $L_2=\widetilde B_1$.

Suppose that $k<g$ and that there is a basis
\[\beta_k=\{v_1,v_2,\ldots, v_{m-1},v_m\}\]
with respect to which
\begin{equation}\label{eqn:L_i=B_i}
L_{2i-1}=\widetilde A_i \mbox{ and } L_{2i}=\widetilde B_i
\end{equation}
for all $i=1,2,\ldots,k$. Note that in this case
\[\alpha=\{v_{2k+1},v_{2k+2},\ldots, v_{m-1},v_m\}\]
is contained in $W_k=\displaystyle
\bigcap_{i=1}^{2k} E^i$. It can be shown easily that, in fact, $\alpha$ is a basis
for $W_k$, so that $\dim (W_k)=m-2k$.

Next, we consider $L_{2k+1}$ and $L_{2k+2}$. Let $s\in \{2k+1,2k+2\}$. Since the
subspace $W_k$ is $L_s$--invariant, with respect to the basis
$\beta_k$,
\[
L_s=
\left(
  \begin{array}{cc}
      Z_s & 0\\
      Y_s & X_s \\
  \end{array}
\right).\]
Here, $Z_s$ is a $2k\times 2k$ matrix. Since $L_s$ commutes with each
\[
L_{2i-1}=
\left(
  \begin{array}{cc}
      \bar A_i & 0\\
       0 & I \\
  \end{array}
\right) \ \mbox{ and } \
L_{2i}=
\left(
  \begin{array}{cc}
      \bar B_i & 0\\
       0 & I \\
  \end{array}
\right)\]
for $i=1,2,\ldots, k$,
where the matrix $\bar A_i$ is the $2k\times 2k$ block diagonal matrix
${\rm Diag} (I_2,\ldots, U,\ldots,I_2)$ whose $i^{\rm th}$ block is $U$,
and $\bar B_i$ is obtained from $\bar A_i$ by replacing $U$ with $\widehat U$,
we get that
\begin{itemize}
  \item $Z_s\bar A_i =\bar A_i Z_s$, $Y_s\bar A_i =Y_s$,
  \item $Z_s\bar B_i =\bar B_i Z_s$, $Y_s\bar B_i =Y_s$
\end{itemize}
for each $i$.
We conclude from Lemma~\ref{lem:nXYZ} that $Z_s=I_{2k}$ and $Y_s=0$, so that
\[
L_s=
\left(
  \begin{array}{cc}
      I_{2k} & 0\\
      0 & X_s \\
  \end{array}
\right)\]
In particular, $v_1,v_2,\ldots,v_{2k}$ are eigenvectors of $L_s$.

If $W_k$ were a subspace of $E^s$, then we would have $\dim (E^s)=m$. By this contradiction,
both of the subspaces $W_k\cap E^{2k+1}$ and $W_k\cap E^{2k+2}$ are of dimension $m-2k-1$.
If, furthermore, we had $W_k\cap E^{2k+1}=W_k\cap E^{2k+2}$, then we would conclude that $E^{2k+1}=E^{2k+2}$,
again arriving at a contradiction.
Hence, the subspaces $W_k\cap E^{2k+1}$ and $W_k\cap E^{2k+2}$ are different, so that
\[W_{k+1}=W_k\cap E^{2k+1}\cap E^{2k+2}\] is of dimension $m-2k-2$.

Let $\{ w_{2k+3},w_{2k+4},\ldots, w_m\}$ be a basis of $W_{k+1}$. We choose two vectors
$w_{2k+1}$ and $w_{2k+2}$ such that
\begin{itemize}
  \item $w_{2k+1}\in W_k\cap E^{2k+1}$, $w_{2k+1}\notin W_{2k+1}$,
  \item $w_{2k+2}\in W_k\cap E^{2k+2}$, $w_{2k+2}\notin W_{2k+1}$.
\end{itemize}
Then $\{ w_{2k+1},w_{2k+2},w_{2k+3},w_{2k+4},\ldots, w_{m-1}, w_m\}$
is a basis of $W_k$.
Now consider the basis
\[
\bar \beta_k=\{ v_1,v_2, \ldots, v_{2k}, w_{2k+1},w_{2k+2},\ldots, w_{m-1}, w_m\}
\]
of $\C^m$. With respect to this basis,
\begin{itemize}
  \item $L_{2i-1}=\widetilde A_i$, $L_{2i}=\widetilde B_i$ for $i=1,2,\ldots, k$,
  \item $ L_{2k+1}=
\left(
  \begin{array}{ccc}
      I_{2k} & 0 &0\\
       0 & X_1 & 0 \\
       0 & X_2 & I \\
  \end{array}
\right)$ and $ L_{2k+2}=
\left(
  \begin{array}{ccc}
      I_{2k} & 0 &0\\
       0 & Y_1 & 0 \\
       0 & Y_2 & I \\
  \end{array}
\right)$,
\end{itemize}
where
$X_1=\left(
       \begin{array}{cc}
         1 & x \\
         0 & 1 \\
       \end{array}
     \right)$,
$X_2=\left(   \begin{array}{cccc}
                 0 & 0 &  \cdots & 0 \\
                 x_{2k+3} & x_{2k+4} & \cdots & x_m\\
           \end{array}  \right)^t$,
$Y_1=\left(
       \begin{array}{cc}
         1 & 0 \\
         y'  & 1 \\
       \end{array}
  \right)$, and
  $Y_2=\left(   \begin{array}{cccc}
         y'_{2k+3} & y'_{2k+4} & \cdots & y'_m\\
         0 & 0 &  \cdots & 0 \\
    \end{array}  \right)^t$.

The rest of the proof proceeds as above: If $x=0$ then the braid relation
\begin{equation}\label{eqn:braid2k}
    L_{2k+1}L_{2k+2}L_{2k+1}=L_{2k+2}L_{2k+1}L_{2k+2}
\end{equation}
would imply that $y'=0$, so that $L_{2k+1}$ and $L_{2k+2}$ commute. The braid relation~\eqref{eqn:braid2k}
now gives $L_{2k+1}=L_{2k+2}$, which is a contradiction.

Hence, $x\neq 0$.
Define
\[
w'_{2k+1}=x \, w_{2k+1}+ (x_{2k+3} w_{2k+3}+x_{2k+4} w_{2k+4}+\cdots +x_m w_m)
\] and let
$\beta'_k$ be the basis obtained from $\bar \beta_k$ by replacing $w_{2k+1}$ with $w'_{2k+1}$.
With respect to $\beta'_k$, the matrices of $L_1,\ldots,L_{2k}$ are the same, and
$L_{2k+1}=\widetilde A_{k+1}$. The matrix of $L_{2k+2}$ turns into a new matrix of the form above
where $y'$ is replaced by some $y$, and $y'_j$ is replaced by some $y_j$. The braid relation~\eqref{eqn:braid2k} then
implies that $y=-1$. If we now define
\[
w'_{2k+2}= w_{2k+2} - (y_{2k+3}w_{2k+3} +y_{2k+4}w_{2k+4}+\cdots +y_m w_m)
\] and
\[
\beta_{k+1}=\{v_1,v_2,\ldots,v_{2k},w'_{2k+1},w'_{2k+2}, w_{2k+3},w_{2k-4},\ldots, w_{m-1},w_m\},
\] we have
$L_{2i-1}=\widetilde A_i$ and $L_{2i}=\widetilde B_i$ for all $1\leq i\leq k+1$ with respect to $\beta_{k+1}$.

Consequently, repeating this for $k=1,2,3,\ldots,g-1$, with respect to some basis of $\C^m$,
$L_{2i-1}=\widetilde A_i$ and $L_{2i}=\widetilde B_i$ for all $1\leq i\leq g$.

This finishes the proof of the lemma.
\end{proof}

\bigskip


\subsection{Triviality of a representation of mapping class group}
We give some criteria for the triviality of a representation of the mapping class group
into $\GL(m,\C)$. The main tool for this is Theorem~\ref{thm:2g-1}. This subsection
is inspired by~\cite{fh}.

\begin{lemma} \label{lem:flag}
Let $S$ be a compact connected oriented surface of genus $h\geq 2$ and let $\psi:\mod (S)\to \GL (m,\C)$ be a homomorphism.
If there is a flag $0=W_0 \subset W_1 \subset W_2 \subset \cdots \subset W_k= \C^{m}$ of $\mod (S)$--invariant subspaces
such that $\dim (W_i/W_{i-1})\leq 2h-1$ for each $i=1,2,\ldots,k$, then the image of $\psi$ is
\begin{itemize}
  \item[(i)] trivial if $h\geq 3$, and
  \item[(ii)] a quotient of $\Z_{10}$ if $h=2$.
\end{itemize}
Equivalently, the image of the commutator subgroup of $\mod (S)$ is trivial.
\end{lemma}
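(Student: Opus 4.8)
The plan is to reduce everything to the triviality of $\psi$ on the commutator subgroup $\mod(S)'$, after which statements (i) and (ii) are immediate from the known computation of $H_1(\mod(S);\Z)$.

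First I would record that a flag of $\mod(S)$--invariant subspaces produces, for each $i=1,2,\ldots,k$, a linear action of $\mod(S)$ on the quotient $W_i/W_{i-1}$, hence a homomorphism $\psi_i:\mod(S)\to \GL(W_i/W_{i-1})\cong \GL(d_i,\C)$ with $d_i=\dim(W_i/W_{i-1})\leq 2h-1$. Since $h\geq 2$, Theorem~\ref{thm:2g-1} applies to every $\psi_i$ and shows that each $\psi_i$ factors through the abelianization $\mod(S)\to H_1(\mod(S);\Z)$. In particular $\mod(S)'\subseteq \ker\psi_i$ for all $i$: every element of the commutator subgroup acts as the identity on each graded piece of the flag.

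Next I would choose a basis of $\C^m$ adapted to the flag (a basis of $W_1$, extended to a basis of $W_2$, and so on), so that every element of $\mod(S)$ is block upper triangular relative to the flag. By the previous paragraph, for $g\in\mod(S)'$ all the diagonal blocks of $\psi(g)$ are identity matrices, so $\psi(g)$ is an upper triangular matrix with $1$'s on the diagonal. Hence $\psi(\mod(S)')$ is contained in the group of upper triangular matrices of $\GL(m,\C)$, which is solvable by the lemma stated above. On the other hand $\mod(S)'$ is perfect by Theorem~\ref{thm:G'prf}; a homomorphic image of a perfect group is perfect, and a perfect subgroup of a solvable group is trivial. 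Therefore $\psi(\mod(S)')=\{I\}$, which is exactly the final assertion of the lemma.

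Finally, $\psi$ then factors through $\mod(S)/\mod(S)'\cong H_1(\mod(S);\Z)$, which by Theorem~\ref{thm:H-1} is trivial when $h\geq 3$ and cyclic of order $10$ when $h=2$, giving (i) and (ii). The argument is essentially routine; the only point needing a moment's care is that the triangularization in the third step must be \emph{simultaneous} over all of $\mod(S)'$, and this is precisely why the basis is chosen compatibly with the flag once and for all, independently of the group element.
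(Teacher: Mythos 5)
Your proof is correct and follows essentially the same route as the paper: pass to the block-diagonal representations on the graded pieces $W_i/W_{i-1}$, apply Theorem~\ref{thm:2g-1} to kill the commutator subgroup on each block, observe that $\psi$ then sends $[\mod(S),\mod(S)]$ into a solvable (the paper says nilpotent, for the unipotent subgroup) group, and invoke perfectness from Theorem~\ref{thm:G'prf}. The only cosmetic difference is solvable versus nilpotent for the triangular group, which changes nothing.
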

\begin{proof} We set $\Gamma=\mod (S)$.
For each $i=1,2,\ldots, k$, let $\alpha_i$ be a basis of $W_i$ such that
$\alpha_i \subset \alpha_{i+1}$. We work with the basis $\alpha_k$ of $W_k=\C^m$.
Since each $W_i$ is $\Gamma$--invariant, for $f\in \Gamma$ the matrix $\phi (f)$ is of the form
\[ \psi (f)=
\left(
  \begin{array}{ccccc}
    F_1 & \star & \star &\cdots & \star \\
    0 & F_2 & \star & \cdots & \star \\
    0 & 0 & F_3 & \cdots & \star \\
    \vdots & \vdots &\vdots & \ddots & \vdots \\
    0 & 0 &0& \cdots & F_k \\
  \end{array}
\right).\]
Then, for each $i$, the correspondence $f\mapsto F_i$ defines a homomorphism $\psi_i: \Gamma \to \GL (W_i/W_{i-1})$. Since
$\dim (W_i/W_{i-1})\leq 2h-1$, the image of $\psi_i$ is cyclic (trivial if $h\geq 3$)
by Theorem~\ref{thm:2g-1}.

Since the image $\psi_i$ is abelian, we get $\psi_i(f)=I$ for all $f\in [\Gamma,\Gamma]$ and for all $i$. Hence,
for any $f\in [\Gamma,\Gamma]$, $\psi (f)$ is upper triangular with $1$ along the diagonal. The subgroup
of $\GL(m,\C)$ consisting of such matrices is nilpotent, and the group $[\Gamma,\Gamma]$ is perfect by Theorem~\ref{thm:G'prf}.
We conclude from this that $\psi( [\Gamma,\Gamma])$ is trivial.
\end{proof}

\begin{corollary} \label{cor:flag}
 Let $R$ be a compact connected oriented
surface of genus $g-1 \geq 3$ and let $\psi:\mod (R)\to \GL (2g,\C)$ be a homomorphism.
If there is a $\mod (R)$--invariant subspace $W$ with $3\leq \dim (W)\leq 2g-3$,
then $\psi $ is trivial.
\end{corollary}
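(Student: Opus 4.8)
The plan is to deduce Corollary~\ref{cor:flag} from Lemma~\ref{lem:flag} by exhibiting a suitable flag of $\mod(R)$--invariant subspaces of $\C^{2g}$. We are given one $\mod(R)$--invariant subspace $W$ with $3\le \dim(W)\le 2g-3$, and the genus of $R$ is $h:=g-1\ge 3$, so Lemma~\ref{lem:flag} applies with this value of $h$: any $\mod(R)$--invariant flag all of whose successive quotients have dimension at most $2h-1=2g-3$ forces $\psi$ to be trivial. So the whole problem reduces to showing that such a flag exists, given $W$.

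First I would try the two-step flag $0\subset W\subset \C^{2g}$. The first quotient is $W$ itself, of dimension $\le 2g-3$, which is fine. The second quotient is $\C^{2g}/W$, of dimension $2g-\dim(W)$, and since $\dim(W)\ge 3$ this is $\le 2g-3$ as well. Both quotients carry the induced $\mod(R)$--action (the action on $\C^{2g}/W$ is well defined precisely because $W$ is invariant), and both have dimension at most $2(g-1)-1$. Hence Lemma~\ref{lem:flag}(i) — note $h=g-1\ge 3$ — applies directly and gives that $\psi$ is trivial. This is really all there is to it: the hypothesis $3\le\dim(W)\le 2g-3$ was chosen exactly so that both $W$ and its complement-dimension sit at or below the threshold $2g-3=2h-1$.

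The only point that needs a word of care is that Lemma~\ref{lem:flag} is stated for a surface of genus $h$ with the bound $2h-1$ on the quotient dimensions, and here we must be scrupulous that the relevant genus is that of $R$, namely $g-1$, not $g$; the inequality $\dim(W)\le 2g-3 = 2(g-1)-1$ is what makes this match. I do not anticipate a genuine obstacle — this corollary is a bookkeeping consequence of the lemma — but if one wanted to be extra careful one could instead refine $W$ to a complete $\mod(R)$--invariant flag through $W$ with one-dimensional steps where possible; this is unnecessary here since the crude two-step flag already meets the hypothesis of Lemma~\ref{lem:flag}.

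In summary, the proof I would write is: set $h=g-1\ge 3$; observe $0\subset W\subset\C^{2g}$ is a flag of $\mod(R)$--invariant subspaces with $\dim(W)\le 2g-3=2h-1$ and $\dim(\C^{2g}/W)=2g-\dim(W)\le 2g-3=2h-1$; apply Lemma~\ref{lem:flag}(i) to conclude that $\psi$ is trivial. I expect the entire argument to be two or three sentences.
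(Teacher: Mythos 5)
Your proof is correct and is precisely the intended argument: the paper states this corollary immediately after Lemma~\ref{lem:flag} without proof, and the two-step flag $0\subset W\subset\C^{2g}$ with both quotients of dimension at most $2(g-1)-1$ is exactly the instantiation of that lemma being invoked.
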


\bigskip


\section{Uniqueness of the symplectic representation}
\label{sec:symprep}
Let $S$ be a compact oriented surface of genus $g\geq 3$ and let $\phi:\mod (S)\to \GL (2g,\C)$ be
a homomorphism. We prove in this section that either $\phi$ is trivial or, with respect to a suitable basis of $\C^{2g}$,
the image of $\phi$ is equal to $\Sp(2g,\Z)$ with respect to a suitable basis of $\C^{2g}$. Theorem~\ref{thm:main1}
will follow from this.
Recall that for a simple closed curve $a$, $E^{a}_\lambda$ denotes the eigenspace
corresponding to an eigenvalue $\lambda$ of $L_a=\phi (t_a)$ and $\lambda_\#$ denotes the multiplicity
of $\lambda$.

\begin{lemma}
\label{lem:E=2g-2}
Let $g\geq 4$, let $a$ be a nonseparating simple closed curve on $S$ and let $\lambda$ be an
eigenvalue of $L_a$. If $\lambda_\#\geq 3$ then $\dim (E^{a}_\lambda) \geq 2g-1$.
In particular, $\lambda_\#\geq 2g-1$.
\end{lemma}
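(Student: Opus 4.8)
The plan is to work on the complement $R$ of a regular neighborhood of $a$ in $S$, which is connected of genus $g-1\geq 3$. Extending diffeomorphisms by the identity gives $q\colon\mod(R)\to\mod(S)$; set $\psi=\phi\circ q$, and pick a curve $a'\subset R$ isotopic in $S$ to $a$, so that $q(t_{a'})=t_a$ and $\psi(t_{a'})=L_a$. Because $\psi(f)$ commutes with $L_a$ for every $f\in\mod(R)$, each of the subspaces $\ker(L_a-\nu I)^k$ and $\Im(L_a-\nu I)^k$ is $\mod(R)$--invariant, and so is the generalized $\lambda$--eigenspace $K=\ker(L_a-\lambda I)^{2g}$. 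I will repeatedly exploit two facts coming from $g-1\geq 3$: any homomorphism from $\mod(R)$ with abelian image is trivial since $H_1(\mod(R);\Z)=0$ (Theorem~\ref{thm:H-1}); and Lemma~\ref{lem:flag} forces a homomorphism $\mod(R)\to\GL(n,\C)$ that admits a $\mod(R)$--invariant flag with all graded pieces of dimension $\leq 2(g-1)-1=2g-3$ to be trivial.

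First I would prove $\dim E^a_\lambda\geq 2g-2$. Suppose $\dim E^a_\lambda\leq 2g-3$. The chain $0\subsetneq\ker(L_a-\lambda I)\subsetneq\ker(L_a-\lambda I)^2\subsetneq\cdots\subsetneq K$ (completed by $\C^{2g}$ when $\lambda_\#<2g$) is a $\mod(R)$--invariant flag; its graded pieces inside $K$ have non-increasing dimensions bounded by $\dim E^a_\lambda\leq 2g-3$, and the top one, when present, has dimension $2g-\lambda_\#\leq 2g-3$ because $\lambda_\#\geq 3$. So Lemma~\ref{lem:flag} makes $\psi$ trivial, whence $L_a=\psi(t_{a'})=I$ and $\dim E^a_\lambda=2g$, a contradiction.

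The core of the proof is excluding $\dim E^a_\lambda=2g-2$ (so that in particular $\lambda_\#\geq 2g-2$, by the previous step). The induced representation $\rho\colon\mod(R)\to\GL(E^a_\lambda)$ sends $t_{a'}$ to $L_a|_{E^a_\lambda}=\lambda I$; as all Dehn twists about nonseparating curves of $R$ are conjugate to $t_{a'}$ and generate $\mod(R)$, the image of $\rho$ lies in $\langle\lambda I\rangle$, hence is abelian, hence trivial, forcing $\lambda=1$. If $\lambda_\#<2g$, then $L_a$ has an eigenvalue $\mu\neq 1$ with $\mu_\#\leq 2g-\lambda_\#\leq 2\leq 2g-3$, contradicting Lemma~\ref{lem:ev=1}. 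If $\lambda_\#=2g$, then $L_a$ is unipotent with $\operatorname{rank}(L_a-I)=2$, so its Jordan form has either one $3\times3$ block and $2g-3$ trivial blocks, or two $2\times2$ blocks and $2g-4$ trivial blocks; in particular $(L_a-I)^3=0$, so $\Im(L_a-I)$, of dimension $2$, is contained in $\ker(L_a-I)^2$. In the two--block case one uses the $\mod(R)$--invariant flag $0\subset\Im(L_a-I)\subset\ker(L_a-I)\subset\C^{2g}$, with graded dimensions $2,\,2g-4,\,2$; in the one--block case one uses $0\subset\Im(L_a-I)\subset\ker(L_a-I)^2\subset\C^{2g}$, with graded dimensions $2,\,2g-3,\,1$. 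All graded pieces have dimension $\leq 2g-3$, so Lemma~\ref{lem:flag} makes $\psi$ trivial, hence $L_a=I$ and $\dim E^a_\lambda=2g$, again a contradiction. Therefore $\dim E^a_\lambda\geq 2g-1$, and since $\dim E^a_\lambda\leq\lambda_\#$ the ``in particular'' clause follows.

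I expect the delicate point to be the last case $\lambda_\#=2g$: one must check that $\operatorname{rank}(L_a-I)=2$ excludes Jordan blocks of size $\geq 4$, enumerate the two surviving Jordan types, and verify that in each the nested $\mod(R)$--invariant subspaces $\Im(L_a-I)$ and $\ker(L_a-I)^2$ really yield a flag whose every graded piece has dimension at most $2(g-1)-1$ — which is exactly where the hypothesis $g\geq 4$ enters. The remaining steps are bookkeeping with eigenspace and generalized-eigenspace dimensions together with Lemma~\ref{lem:ev=1} and Lemma~\ref{lem:flag}.
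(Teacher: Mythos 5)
Your first reduction---ruling out $\dim E^a_\lambda\leq 2g-3$ via the full generalized-eigenspace flag $0\subsetneq\ker(L_a-\lambda I)\subsetneq\cdots\subsetneq K\subsetneq\C^{2g}$---is correct and, if anything, cleaner than the paper's case analysis, which instead chooses a single $\mod(R)$--invariant subspace $W$ of dimension between $3$ and $2g-3$ according to the size of $\lambda_\#$ and of $\dim E^a_\lambda$ and invokes Corollary~\ref{cor:flag}. (Using the complement of $a$ alone rather than of $a\cup b$ is harmless there.) But the step you call the core---excluding $\dim E^a_\lambda=2g-2$---has a genuine gap. You assert that ``all Dehn twists about nonseparating curves of $R$ are conjugate to $t_{a'}$'' and use this to put the image of $\rho$ inside $\langle\lambda I\rangle$. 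This is false: $a'$ is boundary-parallel in $R$ (it is a push-off of $a$ into the complement of a regular neighbourhood of $a$), and a boundary Dehn twist is not conjugate in $\mod(R)$ to any twist about a nonseparating curve of $R$; boundary twists are central, nonseparating twists are not. So $\rho(t_{a'})=\lambda I$ says nothing about $\rho(t_c)$ for a nonseparating $c\subset R$, you have not shown $\operatorname{im}(\rho)$ is abelian, and $\lambda=1$ does not follow. Without $\lambda=1$, the $\lambda_\#<2g$ branch (which needs $\lambda\neq\mu=1$) and the $\lambda_\#=2g$ branch (which needs $L_a$ unipotent) both collapse.

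The paper never tries to pin down $\lambda$ at this stage. It takes $R$ to be the complement of $a\cup b$ with $b$ crossing $a$ once and splits on whether $E^a_\lambda=E^b_\lambda$. If not, $E^a_\lambda\cap E^b_\lambda$ is a $\mod(R)$--invariant subspace of dimension $2g-3$ or $2g-4$, so Corollary~\ref{cor:flag} forces $\phi|_{\mod(R)}$ to be trivial and hence $L_a=I$, a contradiction. If $E^a_\lambda=E^b_\lambda$, Lemma~\ref{lem:E-a=E-b} makes $E^a_\lambda$ a $\mod(S)$--invariant subspace, and then Lemma~\ref{lem:flag} applied to $0\subset E^a_\lambda\subset\C^{2g}$ (graded dimensions $2g-2$ and $2$, both $\leq 2g-1$) kills $\phi$ entirely, again a contradiction. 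If you want to salvage your structure, replace the erroneous conjugacy claim with this $E^a_\lambda$ versus $E^b_\lambda$ dichotomy; the rest of your write-up, including the Jordan-type bookkeeping in the $\lambda_\#=2g$ case, then becomes unnecessary.
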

\begin{proof}
Let $b$ be a nonseparating simple closed curve on $S$ intersecting $a$ at one point, and
let $R$ denote the complement of a regular neighborhood of $a\cup b$, so that
it is a subsurface of genus $g-1$. Then, $\mod(R)$ injects into $\mod(S)$.
By identifying $\mod(R)$ with its image, we assume that $\mod (R)$ is a subgroup of $\mod (S)$.

Suppose first that $\dim (E^{a}_\lambda) \leq 2g-3$. Define a subspace $W$ by
\[
W=\left\{
\begin{array}{ll}
  \ker(L_a-\lambda I)^{\lambda_\#}, & \mbox{if } \lambda_\#\leq 2g-3,\\
  \ker(L_a-\lambda I)^3, & \mbox{if } \lambda_\# \geq 2g-2 \mbox{ and } \dim(E^a_\lambda)=1, \\
  \ker(L_a-\lambda I)^2, & \mbox{if } \lambda_\# \geq 2g-2 \mbox{ and } \dim(E^a_\lambda)=2, \\
  E^a_\lambda,           & \mbox{if } \lambda_\# \geq 2g-2 \mbox{ and } 3\leq \dim(E^a_\lambda)\leq 2g-3.
\end{array}
\right.
\]
Since elements of $\mod (R)$ commute with the Dehn twist $t_a$, the subspace $W$ is $\mod(R)$--invariant
and its dimension satisfies $3\leq \dim (W) \leq 2(g-1)-1$.
Since the genus of $R$ is $g-1\geq 3$, $\phi (\mod(R))$ is trivial by Corollary~\ref{cor:flag}.
Since $t_a$ is conjugate to some Dehn twist in $\mod (R)$, we get that $L_a=I$.
This says, in particular, that $\dim (E^a_\lambda) = 2g$, which is a contradiction.

Suppose now that $\dim (E^{a}_\lambda) = 2g-2$.
If $E_\lambda^a \neq E_\lambda^b$, then $E_\lambda^a \cap E_\lambda^b$ is a $\mod (R)$--invariant subspace
and its dimension is either $2g-3$ or $2g-4$.
Hence, $\phi$ is trivial on $\mod (R)$ by Corollary~\ref{cor:flag}.
We conclude again that $L_a=I$, obtaining a contradiction.
If $E_\lambda^a = E_\lambda^b$ then by Lemma~\ref{lem:E-a=E-b}
the eigenspace $E_\lambda^a $ is a $\mod(S)$--invariant subspace of dimension $2g-2$. Hence,
$0\subset E_\lambda^a \subset \C^{2g}$ is a $\mod(S)$--invariant flag.
Now Lemma~\ref{lem:flag} applies to conclude that $\phi$ is trivial, arriving at a contradiction again.
\end{proof}

\begin{lemma}
\label{lem:es=2g-1}
Let $g\geq 4$, let $a$ be a nonseparating simple closed curve on $S$ and let $\lambda$ be an
eigenvalue of $L_a$. If $\dim (E^{a}_\lambda) = 2g-1$ then $\lambda=1$.
\end{lemma}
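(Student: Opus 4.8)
\textit{Proof idea.} The plan is to split on the multiplicity $\lambda_\#$ of $\lambda$ in the characteristic polynomial of $L_a$. Since $\dim(E^a_\lambda)=2g-1$ and $\dim\C^{2g}=2g$, either $\lambda_\#=2g$ or $\lambda_\#=2g-1$. In the first case $\lambda$ is the only eigenvalue of $L_a$ and $\dim(E^a_\lambda)=2g-1=(2g)-1$, so Lemma~\ref{lem:ev=1for2g-1} applies verbatim and yields $\lambda=1$. So the real content is the case $\lambda_\#=2g-1$: then $L_a$ has exactly one further eigenvalue $\mu$, with $\mu_\#=1\le 2g-3$, and Lemma~\ref{lem:ev=1} forces $\mu=1$ and $\dim(E^a_\mu)=1$. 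As the geometric multiplicities $\dim(E^a_\lambda)=2g-1$ and $\dim(E^a_1)=1$ sum to $2g$, the matrix $L_a$ is diagonalizable, equal in a suitable basis to ${\rm Diag}(1,\lambda,\ldots,\lambda)$; it then remains to rule out $\lambda\neq1$.

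To do this I would use the lantern relation exactly as in the proof of Lemma~\ref{lem:ev=1for2g-1}: choose six nonseparating simple closed curves $c_1,\ldots,c_6$ on $S$, each disjoint from $a$, with $t_at_{c_1}t_{c_2}t_{c_3}=t_{c_4}t_{c_5}t_{c_6}$, all seven curves nonseparating in $S$ and $a$ one of the four ``outer'' curves. Each $L_{c_i}$ is conjugate to $L_a$ and commutes with $L_a$; hence $L_{c_i}$ is diagonalizable with characteristic polynomial $(X-1)(X-\lambda)^{2g-1}$ and preserves both eigenspaces $E^a_1$ and $E^a_\lambda$. The key point is that $L_{c_i}$ acts as the identity on the \emph{one-dimensional} space $E^a_1$: the complement $R$ of a regular neighborhood of $a$ in $S$ has genus $g-1\ge 3$, extending diffeomorphisms of $R$ by the identity gives $q:\mod(R)\to\mod(S)$ with $q(\mod(R))$ commuting with $t_a$, so $\mod(R)$ acts on $E^a_1\cong\C$ through a homomorphism into $\C^*$, which is trivial since $H_1(\mod(R);\Z)=0$ by Theorem~\ref{thm:H-1}; and each $t_{c_i}$ lies in $q(\mod(R))$. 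Consequently, with respect to $\C^{2g}=E^a_1\oplus E^a_\lambda$ the characteristic polynomial of $L_{c_i}$ restricted to $E^a_\lambda$ is $(X-\lambda)^{2g-1}$, and since this restriction is still diagonalizable it equals $\lambda I_{2g-1}$; trivially $L_a|_{E^a_\lambda}=\lambda I_{2g-1}$ too. Restricting the lantern identity $L_aL_{c_1}L_{c_2}L_{c_3}=L_{c_4}L_{c_5}L_{c_6}$ to the invariant subspace $E^a_\lambda$ then reads $\lambda^4 I=\lambda^3 I$, forcing $\lambda=1$, a contradiction.

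The step I expect to require the most care is this last reduction: first the bookkeeping that puts $L_a$ in the diagonal form ${\rm Diag}(1,\lambda,\ldots,\lambda)$ with a single ``small'' eigenvalue, and then the observation that $L_{c_i}$ acts \emph{trivially} on $E^a_1$. The latter is where the hypothesis $g\ge 4$ genuinely enters (so that $R$ has genus $\ge 3$ and $\mod(R)$ is perfect), and it is what makes the lantern computation collapse all the way to $\lambda^4=\lambda^3$ rather than to the weaker determinant identity $\lambda^{2g-1}=1$, which alone would not suffice. One should also check that the chosen lantern really has all seven curves nonseparating in $S$ with $a$ among the outer ones and the remaining six disjoint from $a$ --- but this is exactly the configuration already produced in Lemma~\ref{lem:ev=1for2g-1}.
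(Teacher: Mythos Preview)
Your argument is correct, but the paper's proof is considerably shorter and avoids the case split entirely. The paper simply sets $a=c_1$ and chooses nonseparating $c_2,\ldots,c_7$ with the lantern relation $t_{c_1}t_{c_2}t_{c_3}t_{c_4}=t_{c_5}t_{c_6}t_{c_7}$; since each $L_{c_i}$ is conjugate to $L_a$, each $E^{c_i}_\lambda$ has codimension $1$, so $\bigcap_{i=1}^7 E^{c_i}_\lambda$ has dimension at least $2g-7\ge 1$ (this is exactly where $g\ge 4$ enters), and evaluating the lantern on any nonzero vector $v$ in this intersection gives $\lambda^4 v=\lambda^3 v$. No diagonalizability of $L_a$, no appeal to Lemmas~\ref{lem:ev=1for2g-1} or~\ref{lem:ev=1}, and no use of the perfectness of $\mod(R)$ are needed. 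Your route, by contrast, first pins down the full Jordan type of $L_a$ and then exploits the one-dimensionality of $E^a_1$ together with $H_1(\mod(R);\Z)=0$ to force $L_{c_i}|_{E^a_\lambda}=\lambda I$; this is more structural and indeed reveals more (e.g., that in the second case each $L_{c_i}$ is globally scalar on $E^a_\lambda$), but at the cost of extra machinery. The paper's dimension-count trick is the slicker way to extract $\lambda^4=\lambda^3$ from the lantern here.
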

\begin{proof}
Let $a=c_1$. Choose six nonseparating simple closed curves $c_2,c_3,c_4,c_5,c_6,c_7$ on $S$ such that
we have a lantern relation
\begin{equation}
    t_{c_1}t_{c_2}t_{c_3}t_{c_4}=t_{c_5}t_{c_6}t_{c_7}.
\end{equation}
Hence,
\begin{equation}\label{eqn:wwx}
    L_{c_1}L_{c_2}L_{c_3}L_{c_4}=L_{c_5}L_{c_6}L_{c_7}.
\end{equation}
The subspace
$\bigcap_{i=1}^7 E^{c_i}_\lambda$ has positive dimension. Let $v$ be a nonzero vector in this intersection.
Evaluating both sides of~\eqref{eqn:wwx} to $v$ gives $\lambda^4 v=\lambda^3v$, concluding $\lambda=1$.
\end{proof}

\subsection{Proof of Theorem~\ref{thm:main1} for $g\geq 4$}
We give the proof in several steps.
Let $a$ be a nonseparating simple closed curve on $S$
and let $L_a=\phi(t_a)$.

\textbf{Step 1:} We claim that $L_a$ has only one eigenvalue.
First of all, $L_a$ has at most two eigenvalues by Corollary~\ref{cor:2ev}.
Suppose that it has two eigenvalues $\lambda$ and $\mu$ with $\lambda_\#\geq \mu_\#$. Since
$\lambda_\# + \mu_\#=2g$, we have $\mu_\#\leq g$ and $\lambda_\#\geq g$.
Then, we get from Lemma~\ref{lem:ev=1} that $\mu=1$, so that $\lambda\neq 1$.
 On the other hand, by Lemma~\ref{lem:E=2g-2},
$\lambda_\# = 2g-1$ and the dimension of the eigenspace $E^a_\lambda$ is $2g-1$.
Now Lemma~\ref{lem:es=2g-1} implies that $\lambda=1$, giving the desired contradiction.
Therefore, $L_a$ has only one eigenvalue, say $\lambda$.

 By Lemma~\ref{lem:E=2g-2},
the dimension of the eigenspace $E^a_\lambda$ is either $2g-1$ or $2g$.

\textbf{Step 2:}
Suppose first that $\dim(E^a_\lambda)= 2g$, i.e, $L_a=\lambda I$. For any nonseparating simple closed curve $x$,
the Dehn twist $t_x$ is conjugate to $t_a$, so that $L_x$ is conjugate to $L_a$, implying that $L_x=\lambda I$.
Since the mapping class group $\mod (S)$ is generated by Dehn twists about nonseparating simple closed curves,
it follows that the image of $\phi$ is cyclic. Since the mapping class group
$\mod (S)$ is perfect, the image of $\phi$ is trivial.

\textbf{Step 3:} Suppose finally that $\dim(E^a_\lambda) = 2g-1$. Lemma~\ref{lem:es=2g-1} implies that $\lambda=1$.
Thus, the Jordan form of $L_a$ is as in~\eqref{eqn:Jordan}.
Now by Lemma~\ref{lem:L=AL=B}, with respect to a basis of $\C^{2g}$, we have
$L_{a_i}=A_i$ and $L_{b_i}=B_i$ for each $i=1,2,\ldots, g$.

\textbf{Step 4:} Let $\bar S$ be the surface obtained from $S$ by gluing a disk along each boundary component
and let $\pi:\mod(S) \to \mod (\bar S)$ be the surjective homomorphism obtained by extending
diffeomorphisms by the identity over the glued disks. Consider the curves illustrated in Figure~\ref{abcurves}.
Let $e\in \{ e_0,e_1,\ldots,e_p,f_0,f_1,\ldots,f_r \}$.
Since $e$ intersects $b_1$ at one point and is disjoint from all $a_i$ and $b_j$, $j\geq 2$,
the matrix $L_e=\phi (t_{e})$ commutes with all $A_i$ and all $B_j$,
and satisfy the braid relation $ L_{e}B_1L_{e}=B_1L_{e}B_1$. Since $L_e$ is conjugate to
$L_a$, it has only one eigenvalue $\lambda=1$. From Lemma~\ref{lem:X=A-1}, we get that $L_e=A_1$.
It now follows from Proposition~\ref{prop:kernel} that the kernel of
$\pi$ is contained in the kernel of $\phi$. Hence, $\phi$ induces a homomorphism
$\bar \phi: \mod (\bar S) \to \GL (2g,\C)$.

\textbf{Step 5:} Note also that $\phi (t_{a'_{g-1}})=A_{g-1}=\phi (t_{a_{g-1}})$, giving
$\pi (t_{a_{g-1}}^{-1} t_{a'_{g-1}})=I$. A celebrated result of Johnson~\cite{dlj}
says that the Torelli subgroup of $\mod (\bar S)$ is generated normally by
$\pi (t_{a_{g-1}}^{-1} t_{a'_{g-1}})$. Therefore, the Torelli subgroup of $\mod(\bar S)$
is contained in the kernel of $\bar \phi$, so that $\bar \phi$ induces a map
$\varphi :\Sp (2g,\Z)\to \GL (2g,\C)$.

$$
\begindc{\commdiag}[7]
\obj(10,11)[A]{$\mod (S)$}
\obj(10,6) [B]{$\mod (\bar S)$}
\obj(10,1)[C]{$\Sp(2g,\Z)$}
\obj(25,6)[D]{$\GL(2g,\C)$}
\mor{A}{B}{$\pi$}[1,7]
\mor{B}{C}{$ $}[1,7]
\mor{A}{D}{$\phi$}[1,0]
\mor{B}{D}{$\bar\phi$}[1,0]
\mor{C}{D}{$\varphi$}[1,0]
\enddc
$$

\textbf{Step 6:} As the matrix $L_a$ has infinite order, $\Im (\varphi)=\Im (\phi )$ is infinite.
One may easily check that $(A_1B_1A_2B_2\cdots A_gB_g)^3=-I$ so that
the kernel of $\varphi$ does not contain $-I$. From the solution of the congruence subgroup
problem for $\Sp (2g,\Z)$ (see~\cite{menicke}, Corollar 1.), we conclude that the kernel of $\varphi$ is trivial,
so that $\varphi$ is injective.

This concludes the proof of Theorem~\ref{thm:main1}.

\subsection{Sketch of the proof of Theorem~\ref{thm:main1} for $g= 3$}
The proof of Theorem~\ref{thm:main1} for $g=3$ requires more detailed analysis of eigenvalues.
We only sketch the proof.
Note that in this case $\phi:\mod(S)\to \GL(6,\C)$.

Let $a$ and $b$ be two nonseparating simple closed curves intersecting at one point, let $R$
be the complement of a regular neighborhood of $a\cup b$,
and let $c_1$ and $c_2$ be two simple closed curves on $R$ intersecting each other at one point.

\begin{figure}[hbt]
 \begin{center}
      \includegraphics[width=6cm]{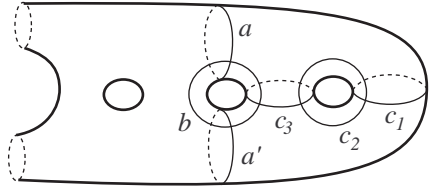}
      \caption{Curves on the surface of genus $3$.}
      \label{g=3curves}
   \end{center}
 \end{figure}

In this case, $L_a$ has at most two eigenvalues by Corollary~\ref{cor:2ev} again.
Suppose first that $L_a$ has two eigenvalues $\lambda$ and
$\mu$ with $\lambda_\#\geq \mu_\#$. Then $\lambda_\#\geq 4$, $\mu_\#\leq 2$, $\mu=1$ and
$\dim (E^a_\mu)=\mu_\#$.
Let $r=\dim(E^a_\lambda)$. Consider the following $\mod(R)$--invariant flags:
\begin{itemize}
  \item $0\subset \ker (L_a-\lambda I)^3\subset \C^6$ if $r=1$,
  \item $0\subset \ker (L_a-\lambda I)\subset \ker (L_a-\lambda I)^2 \subset \C^6$ if $r=2$,
  \item $0\subset E^a_\lambda \subset \C^6$ if $r=3$.
\end{itemize}
Therefore, in the cases $r\leq 3$, Lemma~\ref{lem:flag}  gives that the commutator subgroup of $\mod(R)$ is in the kernel of $\phi$.
In particular, $\phi (t_{c_1}t_{c_2}^{-1})=I$. Since the normal closure of $t_{c_1}t_{c_2}^{-1}$ in $\mod(S)$ is $\mod(S)$ (c.f.~Theorem~\ref{thm:G'}),
the map $\phi$ is trivial, so that $L_a=I$, a contradiction.
In the case  $r=4$, the flag
 $0\subset E^a_\lambda \cap E^b_\lambda \subset E^a_\lambda\subset \C^6$ is $\mod(R)$--invariant
 if $E^a_\lambda\neq E^b_\lambda$, and  the flag
 $0\subset E^a_\lambda \subset \C^6$ is $\mod(S)$--invariant if $E^a_\lambda = E^b_\lambda$,
 both implying that $\phi$ is trivial, a contradiction again.

Suppose now that $r=5$. Fix a basis of $\C^6$ such that
$L_a=\left(
       \begin{array}{cc}
         \lambda I_5 & 0 \\
         0 & 1 \\
       \end{array}
     \right)$.
Since the first homology group of $\mod(S)$ is trivial, $\lambda^5=1$.
For any simple closed curve $x$ disjoint from $a$, the matrix of $L_x$ is equal to
$L_x=\left(
       \begin{array}{cc}
         * & 0 \\
         0 & \gamma(x) \\
       \end{array}
     \right)$
for some nonzero complex number $\gamma (x)$. If $x$ is a nonseparating curve, then $\gamma (x)$ is either $1$ or $\lambda$, as it is an eigenvalue
of $L_x$.

Consider the curves illustrated in Figure~\ref{g=3curves}. If $\gamma (c_1)=1$ then $E^{c_1}_1=E^a_1$.
By the braid relation, it can be seen that $\gamma (c_2)=1$. Hence, $E^{c_2}_1=E^a_1$. It follows that
$E^{x}_1=E^a_1$ for all nonseparating $x$ on $S$, so that $E^a_1$ is $\mod(S)$--invariant.
One may deduce from this that $\phi$ is trivial. In particular, $\lambda=1$.
If $\gamma (c_1)=\lambda$ then braid relations between $t_{c_i}$ imply that $\gamma (c_2)=\gamma (c_3)=\lambda$. Now the relation
$(t_{c_1}t_{c_2}t_{c_3})^4=t_{a}t_{a'}$ and $\lambda^5=1$ imply that $\gamma (a')=\lambda^2$, which is an eigenvalue of $L_{a'}$.
On the other hand, since all eigenvalues of $L_{a'}$ are $1$ and $\lambda$, we conclude that $\lambda=1$.
By these contradictions, $L_a$ must have only one eigenvalue, say $\lambda$.

If $\dim(E^a_\lambda)\leq 4$ then one arrives at a contradiction by deducing that $\phi$ must be trivial.
If $\dim(E^a_\lambda)=6$ then $\phi$ is trivial.
If $\dim(E^a_\lambda)=5$ then Lemma~\ref{lem:ev=1for2g-1} gives $\lambda=1$. Now the rest of the proof proceeds
as in the case $g\geq 4$.
\qed

\bigskip


\section{Non--faithful representations of $\mod(S)$: Proof of Theorem~\ref{thm:main2}}
\label{sec:nonfaithful}

Our aim in this section is to prove Theorem~\ref{thm:main2}. We do this by proving a
stronger result, Theorem~\ref{thm:2g+n}, which says that certain terms in the derived series,
defined below,
of the Torelli subgroup of the mapping class group of a subsurface of genus three are
contained in the kernel of $\phi$. Since the Torelli subgroups of mapping class
groups we consider are not solvable (they contain
nonabelian free groups),  Theorem~\ref{thm:2g+n} will certainly imply
Theorem~\ref{thm:main2}. Recall that the \emph{Torelli subgroup}
of the mapping class group of an orientable surface with at most one boundary component is the subgroup
consisting of those mapping classes which act trivially on the first homology of
the surface.

For a subsurface $X$ of $S$ with one boundary component, the inclusion $X \hookrightarrow S$
induces a homomorphism $\mod (X)\rightarrow \mod (S)$. This map is injective when the genus
of $X$ is less than that of $S$. When this is the case,
we identify the group $\mod (X)$ with its image in $\mod (S)$ as usual. We denote by $T_X$ (the image of) the
Torelli subgroup of $\mod (X)$.

For a group $G$, let $G^{(0)}=G$. For each integer $k\geq 1$, we inductively define the
$k^{\rm th}$ derived subgroup $G^{(k)}$ of $G$ by
\[G^{(k)}=[G^{(k-1)},G^{(k-1)}].\]
Recall that a group $G$ is called \emph{solvable} if $G^{(k)}=1$ for some $k$.

\begin{theorem} \label{thm:2g+n}
Let $n \geq 0$ be an integer. If $g\geq n+3$ and if \ $\phi : \mod (S)\to \GL (2g+n,\C)$ is a homomorphism,
then the $n^{\rm th}$ derived subgroup $T_X^{(n)}$ of $T_X$
is contained in the kernel of $\phi$ for any genus--$3$ subsurface $X$ of $S$
with one boundary component.
\end{theorem}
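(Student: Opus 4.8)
The plan is to argue by induction on $n$. The base case $n=0$ follows from Theorem~\ref{thm:main1}: if $\phi$ is trivial there is nothing to prove, and if $\phi$ is conjugate to $P$ then $\ker\phi=\ker P$ is the preimage in $\mod(S)$ of the Torelli subgroup of $\mod(\bar S)$, which contains $T_X$ because the separating curve $\partial X$ splits $H_1(\bar S;\Z)$ as $H_1(X;\Z)\oplus H_1(\bar S\setminus X;\Z)$ and a mapping class supported on $X$ and lying in $T_X$ acts trivially on both summands, hence on $H_1(\bar S;\Z)$; thus $T_X=T_X^{(0)}\subseteq\ker\phi$.

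For the inductive step, assume the theorem for $n-1$ and let $\phi:\mod(S)\to\GL(2g+n,\C)$ with $g\geq n+3$; then $g\geq 4$ and $2g+n\leq 3g-3\leq 4g-5$. We may assume $\phi$ is nontrivial, so that $L_a=\phi(t_a)\neq I$ for every nonseparating simple closed curve $a$, all such twists being conjugate. Fix a subsurface $R\subset S$ of genus $g-1$ with one boundary component containing $X$; the complement of $R$ still carries a nonseparating curve, and any Dehn twist supported there commutes with $\mod(R)$. The idea is to choose such a curve $a$, analyse the eigenvalues of $L_a$ via Corollary~\ref{cor:2ev}, Lemma~\ref{lem:ev=1}, Lemma~\ref{lem:E=2g-2} and Lemma~\ref{lem:es=2g-1}, and thereby produce a $\mod(R)$--invariant (or $\mod(S)$--invariant) flag of $\C^{2g+n}$ of length $2$ or $3$ whose successive quotients are either of dimension at most $2(g-1)-1$, hence acted on trivially by $\mod(R)$ by Theorem~\ref{thm:2g-1} (the genus of $R$ being $g-1\geq 3$), or of dimension $2h+n'$ for a subsurface of genus $h$ with $n'\leq n-1$ and $h\geq n'+3$, so the inductive hypothesis applies. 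Since the group of matrices preserving a length--$2$ flag and acting trivially on its quotients is abelian, and for a length--$3$ flag is nilpotent of class $2$, in the first case $\phi$ kills $T_X^{(n'+1)}$ and in the second $\phi$ kills $T_X^{(n'+2)}$; as $n'+2\leq n$, either way $T_X^{(n)}\subseteq\ker\phi$.

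Concretely, by Corollary~\ref{cor:2ev} the matrix $L_a$ has at most two eigenvalues, and by Lemma~\ref{lem:ev=1} at most one of them, say $\mu$, differs from $1$, with $\mu_\#\geq 2g-2$. Write $\C^{2g+n}=K\oplus K'$ with $K$ the generalized $\mu$--eigenspace (the zero space if $\mu$ does not occur) and $K'$ the generalized $1$--eigenspace, both $\phi(\mod(R))$--invariant. In the principal case, $\mu$ occurs with $\dim K'\geq 3$: then $\dim K'=2g+n-\mu_\#\leq n+2\leq 2(g-1)-1$, so $\mod(R)$ is trivial on $K'$, and on $K$ one obtains a representation $\mod(R)\to\GL(K)$ of dimension $2(g-1)+n'$ with $n'=n+2-\dim K'\leq n-1$ and $g-1\geq n+2\geq n'+3$; the inductive hypothesis finishes this case. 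When $L_a$ is unipotent, Lemma~\ref{lem:flag} forces $\dim E^a_1\geq 2g-2$, and if $\dim E^a_1<2g+n-1$ one recurses on $E^a_1$ (or on $E^a_1\cap E^b_1$ for a second nonseparating $b$ meeting $a$ once, when $E^a_1$ fails to be $\mod(S)$--invariant), which has dimension at most $3g-3$, the complementary quotients being of dimension $\leq n+2\leq 2g-3$ and hence trivial; if $\dim E^a_1=2g+n-1$, i.e. $L_a$ has the Jordan form~\eqref{eqn:Jordan}, one is in the situation of Lemma~\ref{lem:L=AL=B}, which exhibits the standard symplectic $2g$--dimensional summand and, via Proposition~\ref{prop:kernel} and Johnson's normal generation of the Torelli subgroup of $\mod(\bar S)$, even yields $T_X\subseteq\ker\phi$.

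The main obstacle is the collection of residual low--dimensional configurations, which, just as in the $g=3$ part of the proof of Theorem~\ref{thm:main1}, must be excluded or handled individually: the case $L_a=\mu I$, where the image of $\phi$ is cyclic, hence trivial since $\mod(S)$ is perfect, a contradiction; the case where $L_a$ has a single non--trivial eigenvalue but is not scalar, where Lemmas~\ref{lem:E=2g-2} and~\ref{lem:es=2g-1} force $\dim E^a_\mu\geq 2g$ and one must contradict this using Corollary~\ref{cor:flag} together with the rigidity of the symplectic representation; and the boundary cases $\dim K'\in\{1,2\}$ with $g=n+3$, where the straightforward reduction misses the hypothesis $h\geq n'+3$ by one and one must instead split off the line or plane $E^a_1$ — using that for distinct nonseparating $a,b$ meeting once $E^a_1+E^b_1$ is $\mod(R)$--invariant of small dimension — and recurse at level $n-1$ or $n-2$ on the complementary quotient. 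These cases require the finer eigenspace bookkeeping and form the technical core of the argument.
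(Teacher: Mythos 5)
Your broad strategy --- induction on $n$, eigenvalue analysis of a Dehn twist via Corollary~\ref{cor:2ev} and Lemma~\ref{lem:ev=1}, flag arguments feeding back into Theorem~\ref{thm:2g-1} and the inductive hypothesis, and the symplectic case via Lemma~\ref{lem:L=AL=B} --- is essentially the paper's. The base case and the one-eigenvalue cases with $\dim E^a_1 \in \{2g+n-1, 2g+n\}$ are handled as in the paper.

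There is, however, a genuine gap in your two-eigenvalue ``principal case,'' and it is precisely the one you try to handwave in your final paragraph. You decompose $\C^{2g+n} = K \oplus K'$ with $K$ the generalized $\mu$-eigenspace and $K'=E^a_1$, both $\mod(R)$-invariant, kill the $K'$-action by Theorem~\ref{thm:2g-1}, and apply the inductive hypothesis to the $\mod(R)$-action on $K$, writing $\dim K = 2(g-1)+n'$ with $n' = n+2-\dim K'$. For the induction to close you need $n'\leq n-1$, i.e.\ $\dim K'\geq 3$; when $\dim K'\in\{1,2\}$ you get $n'\in\{n,n+1\}$, which is not a smaller level (this has nothing to do with $g=n+3$ specifically --- it is a failure of the induction to make progress for \emph{any} $g$). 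Your proposed fix, that $E^a_1+E^b_1$ is $\mod(R)$-invariant of small dimension, does not address this: a small $\mod(R)$-invariant subspace again leads to a $\mod(R)$-representation on a complement of dimension close to $2(g-1)+n+1$, which again overshoots the inductive threshold. The paper's resolution is to \emph{upgrade} from $\mod(R)$-invariance to $\mod(S)$-invariance: once $\mod(R)$ acts trivially on $E^a_1$ (by Theorem~\ref{thm:2g-1}, since $\mu_\#<2(g-1)-1$), every nonseparating $x$ on $R$ satisfies $E^x_1 = E^a_1$, and then Lemma~\ref{lem:E-a=E-b} shows $E^a_1$ is $\mod(S)$-invariant. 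This gives a $\mod(S)$-representation on the quotient $\C^{2g+n}/E^a_1$ of dimension $\lambda_\# = 2g+(n-\mu_\#)$, so the inductive hypothesis applies at level $n' = n-\mu_\# \leq n-1$ with the genus hypothesis $g\geq n'+3$ automatic for all $\mu_\#\geq 1$. Because one only has a flag here, not a splitting, the image $\phi(T_X^{(n')})$ is block-unipotent, hence abelian, so $\phi(T_X^{(n'+1)})$ is trivial, and $n'+1\leq n$. Passing from genus $g-1$ to genus $g$ is exactly what offsets the low-codimension cases, and the one tool you are missing to do this is Lemma~\ref{lem:E-a=E-b}.

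Two smaller points. First, Lemmas~\ref{lem:E=2g-2} and~\ref{lem:es=2g-1} are proved only for representations into $\GL(2g,\C)$ and are used in the proof of Theorem~\ref{thm:main1} for $g\geq 4$; they do not apply here. For the codimension-one single-eigenvalue case the paper instead uses Lemma~\ref{lem:ev=1for2g-1}, which is stated for arbitrary $m$. Second, ``Lemma~\ref{lem:flag} forces $\dim E^a_1\geq 2g-2$'' is not accurate: the flag argument (the paper's ``claim'' inside Case~2) \emph{disposes of} the cases $\dim E^a_1\leq 2g+n-3$ (and $\in\{1,2\}$ via $\ker(L_a-I)^3$), leaving only $\dim E^a_1\in\{2g+n-2,\,2g+n-1,\,2g+n\}$ to be examined separately.
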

\begin{proof}
We prove the theorem by induction on $n$. If $n=0$ then $g\geq 3$, so the result follows from Theorem~\ref{thm:main1}.

Let $n\geq 1$, $g\geq n+3$ and let $\phi:\mod (S)\to \GL (2g+n,\C)$ be a homomorphism.
Suppose that the theorem holds true for all $k\leq n-1$.
If $X$ and $Y$ are any genus--$3$ subsurfaces $S$ each
with one boundary component, then the classification of surfaces
tells us that there is a self--diffeomorphism of $S$ mapping $X$ to $Y$. It follows that
the subgroups $\mod(X)$ and $\mod (Y)$ of $\mod (S)$ are conjugate. It is now easy to conclude that
it suffices to prove the theorem for some genus--$3$ subsurface.

Let $a$ be a nonseparating simple closed curve on $S$. Since $2g+n\leq 4g-5$,  by Corollary~\ref{cor:2ev} the matrix
 $L_a$ has at most two eigenvalues.

Let $b$ be a nonseparating simple closed curve intersecting $a$ at one point and let
$R$ denote the complement of a regular neighborhood of $a\cup b$, so that
$R$ is a compact surface of genus $g-1$. By extending self-diffeomorphisms
$R$ to $S$ by the identity, we regard $\mod (R)$ as a subgroup of $\mod (S)$.

 \emph{Case 1: $L_a$ has two eigenvalues.} Suppose that $L_a$ has two eigenvalues $\lambda$ and $\mu$,
 with $\lambda_\#  \geq\mu_\#$. Recall that $\lambda_\# $ denotes the multiplicity of the eigenvalue $\lambda$.
 Since $\lambda_\#  + \mu_\# =2g+n \leq 3g-3$, we have
 $\mu_\# < 2g-3$. By Lemma~\ref{lem:ev=1}, we have $\mu=1$
 and $\dim (E^a_\mu)=\mu_\#$. The dimension of $\ker (L_a -\lambda I)^{\lambda_\#}$ is
 ${\lambda_\#}$. Let $\alpha_1$ be a (ordered) basis of $\ker (L_a -\lambda I)^{\lambda_\#}$
 and let $\alpha_2$ be a basis of $E^a_1$. Then $\alpha=\alpha_1\cup \alpha_2$
 is a basis of $\C^{2g+n}$ and in this basis the matrix of $L_a$ is of the form
 \begin{equation} \label{eqn:matrix1}
  \left(
  \begin{array}{cc}
    \star & 0 \\
    0  & I \\
  \end{array}
\right),
\end{equation}
where the size of the identity matrix $I$ is $\mu_\#$. We work with this basis.

For any $f\in \mod (R)$, $\phi (f)$ commutes with
$L_a$ so that the subspaces $\ker (L_a-\lambda I)^{\lambda_\#}$ and $E^a_1=\ker (L_a-I)$
are $\phi (f)$--invariant, so
\[ \phi (f)
 = \left(
  \begin{array}{cc}
    * & 0 \\
    0  & F' \\
  \end{array}
\right),\]
where $F'$ is of size $\mu_\#$. This way we get a homomorphism $\varphi:\mod (R) \to \GL (\mu_\#,\C)$ defined by
$\varphi(f)=F'$.
Note that since $n\geq 1$, the genus of $S$ satisfies $g\geq 4$, so that the genus of $R$ is $g-1\geq 3$.
By applying Theorem~\ref{thm:2g-1} to $\varphi$, we conclude that $F'=I$. It then follows
that for any simple closed curve $x$ that is nonseparating
on $R$, the eigenspace of $L_x$ satisfies $E^x_1=E^a_1$. In particular, $L_x$ is of the from~\eqref{eqn:matrix1}.

If $c$ and $d$ are two nonseparating simple closed curves on $R$ intersecting at one point,
then we have $E^c_1=E^d_1$ $(=E^a_1)$. Now apply Lemma~\ref{lem:E-a=E-b} to conclude that
$E^a_1$ is $\mod (S)$--invariant. Since the dimension of $E^a_1$ is less than $2g$,
the action of $\mod (S)$ on $E^a_1$ is trivial. In particular, for any element $f\in\mod (S)$, the matrix of $\phi(f)$ is
\[ \phi (f) = \left(
  \begin{array}{cc}
    F & 0 \\
    *  & I \\
  \end{array}
\right).\]
It follows that $\phi$ gives rise to a homomorphism
 $\bar \phi :\mod (S)\to \GL (\lambda_\#,\C)$ defined by
\[ \phi (f) = \left(
  \begin{array}{cc}
    \bar \phi (f) & 0 \\
    *  & I \\
  \end{array}
\right).\]
Since $\lambda_\# \leq 2g+n-1$, by the induction hypothesis $T_X^{(n-1)}$ is contained in $\ker (\bar \phi)$
for some genus--$3$ subsurface $X$ of $S$. We conclude from this
that $\phi (T_X^{(n-1)})$ is abelian, and hence, $\phi (T_X^{(n)})$
is trivial.

\emph{Case 2: $L_a$ has only one eigenvalue.}
 Suppose that $L_a$ has only one eigenvalue, say $\lambda$. Let $X$ be any genus--$3$ subsurface of $R$
with one boundary component. We may consider $\mod(X)$ as a subgroup of $\mod(R)$.

We claim that if there is a $\mod (R)$--invariant subspace $V$ of dimension $r_1$ with $3\leq r_1 \leq 2g+n-3$,
then we are done, namely $\phi (T_X^{(n)})$ is trivial. For the proof of the claim, suppose that
we have such a subspace $V$. Let $r_2=2g+n-r_1$ and
let $\alpha$ be a (ordered) basis of $\C^{2g+n}$ such that first $r_1$ elements span $V$.
With respect to this basis, for any $f\in \mod (R)$,
\[ \phi (f)= \left(
  \begin{array}{cc}
    \phi_1 (f) & \star \\
    0  & \phi_2 (f) \\
  \end{array}
\right),\]
so that we have two homomorphisms $\phi_1:\mod (R) \to \GL (V)=\GL (r_1,\C)$ and
$\phi_2:\mod (R) \to \GL (\C^{2g+n}/V)=\GL (r_2,\C)$.
Since both $r_1$ and $r_2$ satisfy $r_1\leq 2(g-1)+n-1$ and $r_2\leq 2(g-1)+n-1$ and since $g-1\geq (n-1)+3$,
the derived subgroup $T_X^{(n-1)}$ of $T_X$ is contained in the kernel of both $\phi_i$ by the induction hypothesis.
That is to say,
\[ \phi(f)= \left(
  \begin{array}{cc}
    I_{r_1} & * \\
    0  & I_{r_2} \\
  \end{array}
\right)\]
for any $f\in T_X^{(n-1)}$. It follows that $\phi (T_X^{(n-1)})$ is abelian, and hence
$\phi (T_X^{(n)})$ is trivial, proving the claim.

Let us set $r=\dim(E^a_\lambda)$ for the rest of the proof, so that $1\leq r\leq 2g+n$.

Suppose first that $r\leq 2g+n-3$.
If $r$ is equal to $1$ or $2$, then $3\leq \dim (\ker(L_a-\lambda I)^3) \leq 6$.
It follows that in this case, there is a $\mod (R)$--invariant
subspace $V$ with $3\leq \dim(V)\leq 2g+n-3$. Hence, by the above claim, $\phi (T_X^{(n)})$ is trivial.

Suppose next that $r=2g+n$, so that $L_a=\lambda I$. Since $t_x$ is conjugate to $t_a$
for any nonseparating simple closed curve $x$ on $S$, $L_x=\lambda I$. Since $\mod (S)$ is generated by such Dehn
twists, it follows that the image of $\phi$ is cyclic. Since the first homology group of $\mod (S)$ is trivial,
we conclude that the image of $\phi$ is trivial.

Suppose now that $r=2g+n-2$. The dimension of $E^b_\lambda$ is also $r$.
If $E^a_\lambda \neq E^b_\lambda$ then $E^a_\lambda \cap E^b_\lambda$ is a $\mod (R)$--invariant subspace
of dimension $2g+n-3$ or $2g+n-4$. Hence, $T_X^{(n)}$ is contained in the kernel of
$\phi$ by the claim above.
If $E^a_\lambda=E^b_\lambda$ then it follows from Lemma~\ref{lem:E-a=E-b} that
$E^a_\lambda$ is $\mod (S)$--invariant.
Let $\beta$ be an ordered basis of $\C^{2g+n}$ whose first $r$ elements
form a basis of $E^a_\lambda$. For any $f\in\mod(S)$, the matrix of $\phi (f)$
with respect to $\beta$ is
\[ \phi(f)= \left(
  \begin{array}{cc}
    \phi_1 (f) & \star \\
    0  & \phi_2 (f) \\
  \end{array}
\right).\]
In this way we get two homomorphisms $\phi_1: \mod (S)\to\GL (r,\C)$ and $\phi_2: \mod (S)\to\GL (2,\C)$.
By Theorem~\ref{thm:2g-1}, the image of $\phi_2 $ is trivial, and by the induction hypothesis,
$T_X ^{(n-2)}$ is contained in the kernel of $\phi_1$. Therefore, for any $f\in T_X ^{(n-2)}$,
\[ \phi(f)= \left(
  \begin{array}{cc}
    I_r & \star \\
    0  & I_2\\
  \end{array}
\right).\]
It follows that $\phi (T_X ^{(n-2)})$ is abelian, and $\phi (T_X ^{(n-1)})$ is trivial.
In particular,  $\phi (T_X ^{(n)})$ is trivial.

Suppose finally that $r=2g+n-1$. By Lemma~\ref{lem:ev=1for2g-1}, we get $\lambda=1$.
If $E^a_1=E^b_1$ then by Lemma~\ref{lem:eigsp=} we get that  $E^x_1=E^a_1$ for all nonseparating
simple closed curves $x$ on $S$. Since the group $\mod(S)$ is generated by Dehn twists about nonseparating
simple closed curves, it follows that every element of $\mod (S)$ acts trivially on $E^a_1$.
With respect to a basis of $\C^{2g+n}$ whose first $r=2g+n-1$ elements belong to $E^a_1$,
the matrices of $L_a$ and $L_b$ are of the form
\[ \left(
  \begin{array}{cc}
    I_r & \star \\
    0  & 1\\
  \end{array}
\right).\]
It follows that $L_aL_b=L_bL_a$. From the braid relation $L_aL_bL_a=L_bL_aL_b$, we get $L_a=L_b$,
and so $\phi (t_at_b^{-1})=I$. Since the normal closure of $t_at_b^{-1}$ in $\mod (S)$
is the whole group, we conclude that $\phi$ is trivial, which is a contradiction to $\dim (E^a_1)=2g+n-1$.

Therefore, we have $E^a_1\neq E^b_1$. Now, by Lemma~\ref{lem:L=AL=B}, we have
$L_{a_i}=\left(
 \begin{array}{cc}
    A_i & 0 \\
    0  & I_n\\
  \end{array}
\right)$ and
$L_{b_i}=\left(
 \begin{array}{cc}
    B_i & 0 \\
    0  & I_n\\
  \end{array}
\right)$ for a suitable basis of $\C^{2g+n}$. It can now be concluded as in the proof of Theorem~\ref{thm:main1} that
the kernel of $\mod(S)\to \Sp (2g,\Z)$ is contained in the kernel of $\phi$.
In particular, $T_X$, and hence  $T_X^{(n)}$, is contained in the kernel of $\phi$.

This completes the proof of the theorem.
\end{proof}

\bigskip

\end{document}